\newcommand{\mf}{\mathfrak}
\newcommand{\mc}{\mathcal}
\newcommand{\bb}{\mathbb}
\newcommand{\T}{\mathbf{T}}
\newcommand{\M}{\mathbf{M}}
\newcommand{\Mg}{M_{\text{gen}}}
\newcommand{\Mm}{M_{\mf{m}}}
\newcommand{\noequals}{&\mathrel{\phantom{=}}} 
\DeclareMathOperator{\coker}{{\rm coker}}
\newcommand{\id}{{\rm id}}
\DeclareMathOperator{\Hom}{{\rm Hom}}
\DeclareMathOperator{\Ext}{{\rm Ext}}
\DeclareMathOperator{\Tor}{{\rm Tor}}
\DeclareMathOperator{\rk}{{\rm rk}}
\DeclareMathOperator{\rank}{{\rm rank}}
\DeclareMathOperator{\cork}{{\rm cork}}
\DeclareMathOperator{\Frac}{{\rm Frac}}
\DeclareMathOperator{\Pic}{{\rm Pic}}
\DeclareMathOperator{\cl}{{\rm det}} 
\newcommand{\tors}[1]{#1_{\rm tors}} 
\newcommand{\projpart}[1]{#1_{\rm proj}}
\DeclareMathOperator{\val}{{\rm val}}
\DeclareMathOperator{\conv}{{\rm conv}}
\newcommand{\fgMod}[1]{\text{\rm $#1$-Mod}}
\newcommand{\Mat}[1]{\text{\rm $#1$-Mat}}
\newcommand{\K}{\bb K}
\newcommand{\Z}{\bb Z}
\newcommand{\Q}{\bb Q}
\newcommand{\C}{\bb C}
\newcommand{\<}{\langle}
\renewcommand{\>}{\rangle}
\let\oldmarginpar\marginpar
\renewcommand\marginpar[1]{\-\oldmarginpar[\footnotesize #1]{\raggedright\footnotesize #1}}
\DeclareRobustCommand{\qedify}[1]{%
  \ifmmode \quad\hbox{#1}
  \else
    \leavevmode\unskip\penalty9999 \hbox{}\nobreak\hfill
    \quad\hbox{#1}%
  \fi
}
\newtheorem{theorem}{Theorem}[section]
\newtheorem{lemma}[theorem]{Lemma}
\newtheorem{proposition}[theorem]{Proposition}
\newtheorem*{propchar2}{Proposition \ref*{p:DVR M2}a}
\newtheorem{corollary}[theorem]{Corollary}
\theoremstyle{definition}
\newtheorem{definition}[theorem]{Definition}
\newtheorem{Example}[theorem]{Example}
\newenvironment{example}[1][]{\begin{Example}[#1]\pushQED{\qedify{$\diamondsuit$}}}{\popQED\end{Example}}
\newtheorem*{Exchar2}{Example \ref*{p:DVR M2}b}
\newenvironment{exchar2}{\begin{Exchar2}\pushQED{\qedify{$\diamondsuit$}}}{\popQED\end{Exchar2}}
\newtheorem{fact}[theorem]{Fact}
\theoremstyle{remark}
\newtheorem{Remark}[theorem]{Remark}
\newenvironment{remark}[1][]{\begin{Remark}[#1]\pushQED{\qedify{$\diamondsuit$}}}{\popQED\end{Remark}}
\newtheorem{Question}[theorem]{Question}
\newenvironment{question}[1][]{\begin{Question}[#1]\pushQED{\qedify{$\diamondsuit$}}}{\popQED\end{Question}}
\numberwithin{equation}{section}
\begin{document}

\title{Matroids over a ring}

\author[A. Fink]{Alex Fink$^\ast$}
\thanks{$^\ast$Queen Mary University of London}
\address{Alex Fink\\School of Mathematical Sciences, Queen Mary University of London, 327 Mile End Road, London E1 4NS, United Kingdom
}
\email{a.fink@qmul.ac.uk}

\author[L.~Moci]{Luca Moci$^\dag$}
\thanks{$^{\dag}$ IMJ -- Universit\'{e} de Paris 7. Supported by a Marie Curie Fellowship of Istituto Nazionale di Alta Matematica}
\address{Luca Moci\\IMJ -- Universit\'{e} de Paris 7, 5 rue Thomas Mann, 75205 Paris Cedex 13, France}
\email{moci@math.jussieu.fr}

\begin{abstract}
We introduce the notion of a matroid $M$ over a commutative ring $R$, assigning to every subset of the ground set an $R$-module according to some axioms. When $R$ is a field, we recover matroids.  When $R=\mathbb{Z}$, and when $R$ is a DVR, we get (structures which contain all the data of)
quasi-arithmetic matroids, and valuated matroids i.e.\ tropical linear spaces, respectively.

More generally, whenever $R$ is a Dedekind domain, we extend all the usual properties and operations holding for matroids (e.g., duality), and we explicitly describe the structure of the matroids over $R$. Furthermore, we compute the Tutte-Grothendieck ring of matroids over $R$. We also show that the Tutte quasi-polynomial of a matroid over $\mathbb{Z}$ can be obtained as an evaluation of the class of the matroid in the Tutte-Grothendieck ring.

\end{abstract}
\maketitle

\section{Introduction}\label{sec:introduction}
The notion of a \emph{matroid} axiomatizes the linear algebra of a list of 
vectors.  Matroid theory has proved to be a versatile language to deal with many 
problems on the interfaces of combinatorics and algebra. 
In the years since 1935, when Whitney first introduced matroids,
a number of enriched variants thereof have arisen,
among them oriented matroids~\cite{BlV}, valuated matroids~\cite{DW}, 
complex matroids~\cite{AD}, and (quasi-)arithmetic matroids~\cite{MociT, D'Adderio-Moci}.
Each of these structures retains some information about
a vector configuration, or an equivalent object,
which is richer than the purely linear algebraic information
that matroids retain.

As a running motivating example, let us focus on quasi-arithmetic matroids.
A quasi-arithmetic matroid endows a matroid with a multiplicity function, 
whose values are the cardinalities of certain finite abelian groups, 
namely, the torsion parts of the quotients of an ambient lattice 
$\mathbb{Z}^n$ by the sublattices spanned by subsets of vectors.
From a list of vectors with integer coordinates
one may produce objects like a toric 
arrangement, a partition function, and a zonotope (see \cite{DP-book}).
In order to have a combinatorial structure from which
these objects may be read off, one needs to keep track of
arithmetic properties of the vectors, and this is what
quasi-arithmetic matroids provide.
(For the difference between quasi-arithmetic and 
arithmetic matroids, see Remark~\ref{rem:quasi}.)

It is natural to ask to what extent these generalizations of matroids
can be unified under a common framework.  Such a unification
was sought by Dress in his program of 
matroids with coefficients, represented for example in 
his work with Wenzel \cite{DW} wherein
valuated matroids are matroids with coefficients in a ``fuzzy ring''.

In the present paper we suggest a different approach to such unification,
by defining the notion of a \emph{matroid $M$ over a 
commutative ring $R$}.  Such an~$M$ assigns, to every subset $A$ of a ground 
set, a finitely generated $R$-module $M(A)$ according to some axioms (Definition 
\ref{def:matroid}).  We find this definition to have multiple agreeable features.
For one, by building on the well-studied setting of 
modules over commutative rings, we get a theory
where the considerable power and development 
of commutative algebra can be easily brought to bear.
For another, unlike arithmetic and valuated matroids,
a matroid over~$R$ is not defined as a matroid decorated with extra data;
there is only one axiom, and we suggest that it is comparably simple
to the matroid axioms themselves. Indeed, a \emph{realizable} matroid over~$R$
is precisely a vector configuration in a finitely generated $R$-module, 
and the axioms of a matroid over~$R$
say only that minors of at most two elements are such realizable matroids ---
that is, matroids are \emph{locally} realizable matroids.

When $R$ is a field, a matroid $M$ over~$R$ is nothing but a matroid:
the datum $M(A)$ is a vector space, which contains only the
information of its dimension, and this directly encodes the rank function of~$M$. 
When $R=\mathbb{Z}$, every module $M(A)$ is an 
abelian group, and by extracting its torsion subgroup we get a quasi-arithmetic matroid. 
When $R$ is a discrete valuation ring (DVR), we may similarly extract 
a valuated matroid.  
More generally, whenever $R$ is a Dedekind domain, we can
extend the usual properties and operations holding for matroids,
such as duality. 

The idea of matroids over rings was suggested by certain 
features of the theory of quasi-arithmetic matroids.  
Some significant information about an integer vector configuration
is lost in passing to the multiplicity function,
as there exist many finite abelian groups with the same cardinality.
Recording the whole structure of these groups is more desirable in several 
situations, for example, in developing a combinatorial intersection 
theory for the arrangements of subtori arising as characteristic 
varieties.
The properties of the multiplicity function of a quasi-arithmetic matroid 
turn out to be just shadows of group-theoretic properties.

One of the most-loved invariants of matroids is their Tutte polynomial $\T_M(x,y)$.
It thus comes as no surprise that the Tutte polynomial has been considered
for generalizations of matroids as well.
A quasi-arithmetic matroid $\hat{M}$ has an associated
arithmetic Tutte polynomial $\M_{\hat{M}}(x,y)$, which has proved to be an useful tool in 
studying toric arrangements, partition functions, zonotopes, and graphs 
(\cite{MociT, DM-g, Branden-Moci}).  More strongly, 
the authors of \cite{Branden-Moci} define a \emph{Tutte quasi-polynomial}
of an integer vector configuration, 
interpolating between $\T_M(x,y)$ and $\M_{\hat{M}}(x,y)$, which is no longer
an invariant of the quasi-arithmetic matroid (as it depends on the groups, not
just their cardinalities).

Among its properties, the Tutte polynomial of a classical matroid
is the universal deletion-contraction invariant. 
In more algebraic language, following \cite{Brylawski}, 
the class of a matroid in the Tutte-Grothendieck ring 
for deletion-contraction relations is exactly its Tutte polynomial. 
While the arithmetic Tutte polynomial and Tutte quasi-polynomial
are deletion-contraction invariants, neither is universal for this property.
Our generalization of the Tutte polynomial for matroids over a Dedekind ring~$R$
is also the class in the Tutte-Grothendieck ring, so it retains 
the universality of the usual Tutte polynomial, and we obtain the two generalizations
of Tutte just mentioned as evaluations of~it.

This paper is organized as follows. In Section~\ref{sec:definition} we give the basic definitions 
for matroids over a commutative ring, including realizability,
and we explain how they generalize the classical ones. 

In Section~\ref{sec:duality}, we 
establish the existence (Definition~\ref{def:dual}, Proposition~\ref{p:dual}) 
and the properties of the dual of a matroid over a Dedekind domain.  
The case of Dedekind domains
is the one we focus primarily on thereafter, and 
we review the properties of such rings in Section~\ref{sec:Dedekind}.

In Section~\ref{sec:DVR} we develop the local theory, by  proving a structure theorem for 
matroids over a DVR (Propositions \ref{p:DVR M1} and~\ref{p:DVR M2}). 
We show connections with the Hall algebra and with tropical 
geometry.  A matroid over a DVR defines a point on each Dressian, one of the
tropical analogues of the Grassmannian; 
this is equivalent by definition to being a valuated matroid.  
In fact, such a matroid defines a point on the corresponding analogue of
the full flag variety (Corollary~\ref{conj:DVR exchange}).

The global theory is developed is Section~\ref{sec:global}. 
We describe the structure of a matroid over a Dedekind ring~$R$ in terms of that of all its localizations, 
whose structure was completely described in the previous section,
plus some global information coming from the Picard group of~$R$ 
(Propositions \ref{p:Dedekind M1} and~\ref{p:Dedekind M2}).
This also explains the connection between matroids 
over $\mathbb{Z}$ and quasi-arithmetic matroids (Corollary \ref{c:arithmetic}).

In Section~\ref{sec:Grothendieck} we compute the Tutte-Grothendieck ring 
(Theorem~\ref{th:Grothendieck}, Corollary~\ref{cor:what the ring is exactly}).
In particular, given a matroid over 
$\mathbb{Z}$, we present its Tutte quasi-polynomial as an
evaluation of its class in~$K(\Mat{\mathbb{Z}})$.

\subsection*{Acknowledgments}
The authors thank Ezra Miller for helpful conversations,
and the anonymous referee for useful and thought-provoking suggestions.

\section{Matroids over a ring}\label{sec:definition}

By $\fgMod{R}$ we mean the category of finitely generated $R$-modules
over a commutative ring $R$.  
We will feel free to
write ``f.g.''\ for ``finitely generated'' throughout.

\begin{definition}\label{def:matroid}
Let $R$ be a commutative ring. 
A \emph{matroid over $R$} on the ground set $E$
is a function $M$ assigning to each subset $A\subseteq E$
a finitely-generated $R$-module $M(A)$ such that
\begin{itemize}
\item[(M)] for every subset $A\subseteq E$ and elements
$b,c\in E$, there exist elements
$x=x(b,c)$ and $y=y(b,c)$ of $M(A)$ satisfying
\begin{align*}
M(A\cup\{b\}) &\cong M(A)/(x) \\
M(A\cup\{c\}) &\cong M(A)/(y) \\
M(A\cup\{b,c\}) &\cong M(A)/(x,y).
\end{align*}
\end{itemize}
\end{definition}

Clearly, the choice of the modules $M(A)$ is only relevant up to isomorphism.
We regard matroids $M$ and $M'$ over~$R$ to be equal if they are on the same 
ground set $E$ and $M(A)\cong M'(A)$ for all $A\subseteq E$.

For notational concision, we will hereafter let $M(Ab)$ abbreviate $M(A\cup\{b\})$,
$M(Abc)$ stand for $M(A\cup\{b, c\})$, and so forth.

The case of axiom (M) where $b=c$ is the following statement,
which we separate out here as it will provide a useful waypoint in
many of the proofs to come.
\begin{itemize}
\item[(M1)] for every subset $A\subseteq E$ and element
$b\in E$, there exists an element
$x=x(b)$ of $M(A)$ such that $M(A\cup\{b\}) \cong M(A)/(x)$.
\end{itemize}

A more abstract, but equivalent definition will be given in Section \ref{tg}.

The fundamental way of producing matroids over $R$ is from vector configurations in an $R$-module.
Given a f.g.\ $R$-module $N$ and a list $X={x_1,\ldots,x_n}$ of elements of $N$, the matroid $M_X$ of~$X$
associates to the sublist $A$ of $X$ the quotient 
\begin{equation}\label{eq:matroid from vectors}
  M_X(A) = N\Big/\left(\sum_{x\in A}Rx\right).
\end{equation}
For each $x\in X$
there is a quotient map from $M_X(A)$ to $M_X(A\cup\{x\})$, which quotients out by
the image of $Rx$ in $M_X(A)$.  
This single system of maps satisfies axiom~(M):
indeed, the element $x$ depends only on $b$, and $y$ only on~$c$.

\begin{definition}\label{def:realizable}
A matroid over $R$ is \emph{realizable} (or \emph{representable}) if it 
has the form $M_X$ for some list $X$ of elements
of a f.g.\ $R$-module.  We call $X$ a \emph{realization} (\emph{representation})
of~$M$.
\end{definition}

Not all matroids over rings are realizable: indeed, nonrealizable
matroids in the usual sense will provide examples.
Axiom (M) requires only a sort of ``local'' realizability.
Allowing the elements $x$ and $y$ in the axiom to depend on both
$b$ and $c$ is what prevents this local realizability from immediately
extending to global realizability.

\begin{example}
The following four abelian groups do not form a matroid over~$\Z$:
\[M(\emptyset)=\Z/8, \quad M(\{1\}) = \Z/2, \quad M(\{2\}) = \Z/2, \quad M(\{1,2\}) = 1.\]
In axiom (M), when $(A,b,c)=(\emptyset, 1, 2)$, the elements $x$ and $y$
must both be chosen to be in the subgroup of $\Z/8$ generated by 2 for the 
isomorphisms for $M(\{1\})$ and $M(\{2\})$ to hold, but then the isomorphism
for $M(\{1,2\})$ fails.

The following four abelian groups do form a matroid over~$\Z$:
\[M(\emptyset)=\Z/4\oplus \Z/2, \quad M(\{1\}) = \Z/2, \quad M(\{2\}) = \Z/2, \quad M(\{1,2\}) = 1.\]
Indeed, it is a realizable matroid, with a realization being
given by $N=\Z/4\oplus \Z/2$, $x_1 = (1,0)$, $x_2 = (1,1)$.
\end{example}

Our having chosen to call these objects ``matroids over $R$'' is appropriate,
as they are a generalization of matroids in the classical sense,
as we show in Proposition~\ref{p:classical}.
There is one hitch in the equivalence, corresponding to the ability to
choose a vector configuration that does not span its ambient space.
Accordingly, let us say that a matroid $M$ over $R$ is \emph{essential}
if no nontrivial projective module is a direct summand of $M(E)$
(the term is adopted from the theory of hyperplane arrangements).
Lemma~\ref{lem:essential} shows that very little is lost in restricting to essential matroids.

Before getting there we must generalize some standard operations on matroids.
In several cases this is straightforward, but duality is conspicuously not among these:
for matroid duality to work well, we must assume that $R$ is a Dedekind domain,
and so we treat it in Section~\ref{sec:duality}.

Let $M$ and $M'$ be matroids over~$R$ on respective ground sets $E$ and~$E'$.
We define their \emph{direct sum} $M\oplus M'$ on the ground set $E\amalg E'$ by
\[(M\oplus M')(A\amalg A') = M(A)\oplus M'(A').\]
If $i$ is an element of $E$, we define two
matroids over~$R$ on the ground set $E\setminus\{i\}$:
the \emph{deletion} of $i$ in $M$, denoted $M\setminus i$, by
\[(M\setminus i)(A) = M(A)\]
and the \emph{contraction} of $i$ in $M$, denoted $M\setminus i$, by
\[(M/i)(A) = M(A\cup\{i\}).\]
It is easy to check that these satisfy the axioms (M);
this is entirely inherited except for~$M\oplus M'$
when one of $b$ and~$c$ in $E$ and the other is in $E'$, but these cases are clear.
Since these constructions can be made without reliance on the axiom (M),
we will sometimes use them in the sequel when speaking of a map $\mc B(E)\to\fgMod{R}$
which has not yet been shown to be a matroid over~$R$.

The next fact is immediate from these definitions.
\begin{fact}
The class of realizable matroids is closed under minors and direct sums:
\begin{enumerate}\renewcommand{\labelenumi}{(\alph{enumi})}
\item If $M$ is realized by the vector configuration $X$
within a module $N$, then $M/A\setminus B$ is realized
by the images of the vectors $x_i$ where $i\in E\setminus(A\cup B)$
in the quotient module $N/(x_i : i\in A)$.
\item If $M_i$ is realized by the vector configuration $X_i$
within a module~$N_i$, for $i=1,2$, then
$M_1\oplus M_2$ is realized by the configuration 
$(X_1,0)\cup(0,X_2)$
within $N_1\oplus N_2$.
\end{enumerate}
\end{fact}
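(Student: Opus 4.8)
The plan is to verify both closure statements by directly unwinding the definition of $M_X$ in~\eqref{eq:matroid from vectors} and applying elementary isomorphism theorems; no appeal to axiom~(M) is needed, only the formula for the modules attached to a realization. I would treat the two parts separately.

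For part~(a), fix a realization $X=\{x_1,\dots,x_n\}$ of $M$ inside the f.g.\ $R$-module $N$, and fix disjoint subsets $A$ (to be contracted) and $B$ (to be deleted) of $E$. Combining the definitions of deletion and contraction, for any $S\subseteq E\setminus(A\cup B)$ one has $(M/A\setminus B)(S)=M(S\cup A)$, and by~\eqref{eq:matroid from vectors} this equals $N\big/\big(\sum_{x\in A}Rx+\sum_{x\in S}Rx\big)$. Writing $N'=N\big/\big(\sum_{x\in A}Rx\big)$ and letting $\bar x$ denote the image of $x$ in $N'$, the third isomorphism theorem identifies the previous module with $N'\big/\big(\sum_{x\in S}R\bar x\big)$, which is exactly $M_{X'}(S)$ for the configuration $X'=(\bar x_i:i\in E\setminus(A\cup B))$ inside $N'$. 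Since $N'$ is a quotient of the f.g.\ module $N$, it is again finitely generated, so $X'$ is a legitimate realization and $M/A\setminus B=M_{X'}$ as claimed. (Deletion and contraction of single elements are the cases $A=\emptyset$ and $B=\emptyset$, so this also covers the individual minor operations.)

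For part~(b), let $X_1,X_2$ realize $M_1,M_2$ inside f.g.\ modules $N_1,N_2$ over ground sets $E_1,E_2$. A subset of the ground set $E_1\amalg E_2$ has the form $A_1\amalg A_2$ with $A_i\subseteq E_i$, and the corresponding sublist of $(X_1,0)\cup(0,X_2)$ generates inside $N_1\oplus N_2$ the submodule $\big(\sum_{x\in A_1}Rx\big)\oplus\big(\sum_{x\in A_2}Rx\big)$. Hence $M_{(X_1,0)\cup(0,X_2)}(A_1\amalg A_2)=\big(N_1\big/\textstyle\sum_{x\in A_1}Rx\big)\oplus\big(N_2\big/\textstyle\sum_{x\in A_2}Rx\big)=M_1(A_1)\oplus M_2(A_2)=(M_1\oplus M_2)(A_1\amalg A_2)$, using that passing to quotients commutes with finite direct sums; and $N_1\oplus N_2$ is finitely generated. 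This is precisely the asserted realization.

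I do not expect any genuine obstacle here: the entire content is bookkeeping of which submodule of the ambient module each sublist of vectors generates, which is exactly why the statement can be flagged as immediate. The only point that even merits a remark is that the ambient modules $N'$ and $N_1\oplus N_2$ remain finitely generated, which is automatic.
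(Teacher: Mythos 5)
Your proof is correct and is exactly the routine verification the paper has in mind: the paper states this as a Fact and declares it ``immediate from these definitions,'' giving no proof, and your unwinding of equation~\eqref{eq:matroid from vectors} via the third isomorphism theorem for minors and the compatibility of quotients with direct sums supplies precisely the intended bookkeeping. No gaps.
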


If $N$ is an $R$-module, let the \emph{empty matroid} for $N$
be the matroid over~$R$ on the ground set $\emptyset$ which maps $\emptyset$ to~$N$.
By a \emph{projective empty matroid} we mean an empty matroid for a projective module.

\begin{lemma}\label{lem:essential}
Every matroid $M$ over~$R$ is the direct sum of
an essential matroid over~$R$ and a projective empty matroid.
\end{lemma}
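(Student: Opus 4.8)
The plan is to peel off the projective part of $M(E)$ and show the complementary summand propagates coherently to a matroid on all of $E$. First, I would use the structure of f.g.\ modules to write $M(E) \cong P \oplus M_0(E)$, where $P$ is projective (of maximal rank as a projective summand) and $M_0(E)$ has no nontrivial projective summand. The goal is to define $M_0(A)$ for all $A \subseteq E$ so that $M(A) \cong P \oplus M_0(A)$ for every $A$, with $M_0$ a matroid over $R$; then $M = M_0 \oplus (\text{empty matroid for } P)$, and $M_0$ is essential by the maximality of $P$.

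The key step is to produce the splitting compatibly with the quotient maps implicit in axiom (M1). I would induct downward on $|E \setminus A|$, starting from $A = E$. Given the splitting $M(A) \cong P \oplus M_0(A)$, and given $b \in E \setminus A$, axiom (M1) gives $M(A \cup \{b\}) \cong M(A)/(x)$ for some $x \in A$ — wait, the direction is the opposite: (M1) compares $M(Ab)$ to $M(A)$, so I should instead induct \emph{upward}, starting from $A = \emptyset$ and using that $M(Ab)$ is a quotient of $M(A)$. So: suppose $M(A) \cong P \oplus M_0(A)$ with $P$ projective; for $b \notin A$ we have a surjection (up to iso) $M(A) \twoheadrightarrow M(A \cup \{b\})$ with cyclic kernel generated by the image of some $x \in A$. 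I need to see that $P$ still splits off of $M(Ab)$ as a direct summand, and that the quotient has one fewer projective summand or the same, never more. Since a quotient of $P \oplus M_0(A)$ by a cyclic submodule: the image of $x$ lands somewhere in $P \oplus M_0(A)$; projecting to $P$, if the $P$-component of $x$ is a nonzerodivisor on $P$ then... this is where care is needed. Actually the cleaner route: the rank of the maximal projective summand is a well-defined invariant of a f.g.\ module, and it can only decrease under quotients by finitely generated submodules in the cases arising here; but more robustly, one can define $M_0(A)$ abstractly as ``$M(A)$ with its maximal projective summand removed'' and then check axiom (M) holds for $M_0$ using that removing a fixed projective summand commutes with the relevant quotients.

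Concretely I would argue: for a f.g.\ module $L$ over $R$, let $\projpart{L}$ denote a maximal projective direct summand and $\tors{L}$ — no, that notation means torsion; let me instead write $L \cong \projpart{L} \oplus L'$ with $L'$ having no projective summand, and set $M_0(A) = M(A)'$. To verify axiom (M) for $M_0$, take $A, b, c$ and the witnesses $x, y \in A$ from axiom (M) for $M$. One checks $M_0(Ab) \cong M_0(A)/(\bar x)$ for a suitable $\bar x \in A$ (the image of $x$ under the inclusion $A \subseteq M_0(A)$ — here using that each element of $A$ maps to $M_0(A)$ compatibly, which itself needs the inductive splitting), and similarly for the other two relations, so the same witnesses work.

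The main obstacle I anticipate is the compatibility of the splittings across different $A$: a priori the decomposition $M(A) \cong P \oplus M_0(A)$ is only an abstract isomorphism, not functorial, so it is not automatic that the quotient maps $M(A) \to M(Ab)$ respect it, nor that the ground-set elements $x \in A$ used as witnesses in axiom (M) can be taken to lie in the $M_0(A)$ summand. Resolving this requires either (i) choosing the splittings inductively and tracking that the witness elements can be adjusted into $M_0(A)$, using that modifying $x$ by an element of the projective summand doesn't change the relevant quotients, or (ii) the softer observation that since we only need isomorphisms $M_0(Ab) \cong M_0(A)/(x')$ for \emph{some} $x' \in A$ — not the same system of maps — it suffices to compare the abstract modules, and the identity $\big(P \oplus L\big)\big/(\text{cyclic}) \cong P \oplus \big(L/(\text{cyclic}')\big)$ for an appropriate cyclic submodule follows by a small module-theoretic lemma once one knows $P$ splits off the quotient, which in turn follows since $M(Ab)$ is itself one of the modules of the matroid $M$ and hence its own maximal projective summand contains (an isomorphic copy of) $P$ by a rank count along the chain from $\emptyset$ to $Ab$.
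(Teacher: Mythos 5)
Your initial instinct — induct downward on $|E\setminus A|$, starting from $A=E$ — was the right one, and you should not have abandoned it. The reason the downward direction works is the lifting property of projectives: if $P$ is a nontrivial projective direct summand of $M(Ab)$ and $M(A)\twoheadrightarrow M(Ab)$ is the surjection from axiom (M1), then the composite $M(A)\twoheadrightarrow M(Ab)\twoheadrightarrow P$ is a surjection onto a projective module, hence split, and so $P$ is a direct summand of $M(A)$ as well. Applied along any maximal chain from $A$ up to $E$, this shows a projective summand of $M(E)$ is automatically a summand of every $M(A)$. This is exactly the paper's argument. After you switched to going upward from $\emptyset$, you ran into the genuine problem you then noticed: quotients of $P\oplus M_0(A)$ by a cyclic submodule have no reason to retain $P$ as a summand, so the upward direction cannot get off the ground without the backward-lifting observation.

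The place where your proposal actually needs repair is the final clause, where you try to show $P$ splits off $M(Ab)$ by ``a rank count along the chain from $\emptyset$ to $Ab$.'' This is the wrong chain (it should run from $Ab$ up to $E$, where the hypothesis on $P$ lives) and the wrong mechanism (a rank count cannot establish that a projective summand is present over a general commutative ring; you need the projectivity/splitting argument above). Once $P$ is known to split off every $M(A)$, your ``route (ii)'' is on the right track: given a surjection $P\oplus L\twoheadrightarrow M(Ab)$ with cyclic kernel and $M(Ab)\cong P\oplus L'$, compose with the projection to $P$ and split to obtain a decomposition of the source in which $P$ maps isomorphically to $P$ and the cyclic kernel lies in the complementary summand; this gives the needed surjection with cyclic kernel on the complementary summands, and the pushout in axiom (M) decomposes accordingly. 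The paper additionally finesses the issue of maximality that worries you: it does not fix a maximal $P$ up front, but instead peels off \emph{some} nontrivial projective summand and iterates, appealing to finite generation for termination. That sidesteps any need to argue that a ``maximal projective summand'' behaves well under the construction.
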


Note that this decomposition is unique if $R$ is a field,
or if $R$ is Dedekind (Proposition~\ref{p:Dedekind structure}).

\begin{proof}
Suppose $M$ is not essential, so that some projective module $P$ is a direct summand of $M(E)$.
Then in fact $P$ is a direct summand of every
module $M(A)$, since this property lifts back along
the surjections $M(A)\twoheadrightarrow M(Ab)$.  Therefore $M$ is a direct sum
of another matroid $M'$ over $R$ and the empty matroid for $P$.
Since $M$ is finitely generated, iterating this process with
$M'$ in place of~$M$ eventually reaches an essential matroid.
\end{proof}

Recall that the \emph{corank}
$\cork(A)$ of a set $A$ in a classical matroid is
equal to $\rk(E) - \rk(A)$, where $\rk(E)$ is the rank of the matroid.

\begin{proposition}\label{p:classical}
Let $\K$ be a field.
Essential matroids $M$ over $\K$ are equivalent to (classical) matroids.
If $M$ is an essential matroid over $\K$, then $\dim M(A)$ is the corank of $A$ in the
corresponding classical matroid.

A matroid over $\K$ is realizable if and only if, as a classical matroid,
it is realizable over $\K$.
\end{proposition}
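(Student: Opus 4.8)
The plan is to translate axiom~(M) into dimension inequalities and recognize these as the classical rank axioms. Since $\K$ is a field, a finitely generated $\K$-module is determined up to isomorphism by its dimension, so a matroid $M$ over~$\K$ is the same datum as the function $d\colon A\mapsto\dim M(A)$ on subsets of~$E$; and a projective $\K$-module is free, so $M$ is essential precisely when $M(E)=0$, i.e.\ $d(E)=0$. Suppose $M$ is essential and put $\rho(A)=d(\emptyset)-d(A)$. Axiom~(M1) gives $d(Ab)=\dim\bigl(M(A)/(x)\bigr)\in\{d(A),d(A)-1\}$, so $d$ weakly decreases under single-element additions and hence $d(B)\le d(A)$ whenever $A\subseteq B$; therefore $\rho$ is nonnegative and weakly increasing, $\rho(\emptyset)=0$, $\rho(A\cup b)\le\rho(A)+1$, and so $\rho(A)\le|A|$. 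For submodularity of~$\rho$ it suffices, given these properties (a standard reformulation of the rank axioms), to check the local inequality $\rho(Ab)+\rho(Ac)\ge\rho(A)+\rho(Abc)$. In terms of~$d$ this reads $d(Ab)+d(Ac)\le d(A)+d(Abc)$, and substituting the isomorphisms $M(Ab)\cong M(A)/(x)$, $M(Ac)\cong M(A)/(y)$, $M(Abc)\cong M(A)/(x,y)$ of axiom~(M) it becomes
\[
\dim\langle x,y\rangle\ \le\ \dim\langle x\rangle+\dim\langle y\rangle,
\]
which is obvious. Thus $\rho$ is the rank function of a classical matroid $\underline M$ on~$E$; since $d(E)=0$ we get $\rk_{\underline M}(E)=\rho(E)=d(\emptyset)$, hence $\cork_{\underline M}(A)=\rho(E)-\rho(A)=d(A)=\dim M(A)$ — this is the second assertion.

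For the converse I would start from a classical matroid on~$E$ with rank function~$\rk$ and corank function~$\cork$, and set $M(A)=\K^{\cork(A)}$; this is essential because $\cork(E)=0$. To verify axiom~(M) for a triple $(A,b,c)$ one needs $x,y\in\K^{\cork(A)}$ with $\dim\langle x\rangle=\cork(A)-\cork(Ab)$, $\dim\langle y\rangle=\cork(A)-\cork(Ac)$ and $\dim\langle x,y\rangle=\cork(A)-\cork(Abc)$: by the unit-increase property of~$\rk$ these targets lie in $\{0,1\}$, $\{0,1\}$, $\{0,1,2\}$; by monotonicity of~$\rk$ the last dominates each of the first two; and by submodularity of~$\rk$ applied to $A\cup b$ and $A\cup c$ it is at most their sum — so such $x,y$ exist inside $\K^{\cork(A)}$. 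One then checks directly that $M\mapsto\underline M$ and this construction are mutually inverse up to the equivalence on matroids over~$\K$, which proves the first assertion.

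Finally, for realizability: a realization of $M$ over~$\K$ is a list $X$ of vectors in some $\K^m$ with $M(A)\cong\K^m/\spann(x_i:i\in A)$, so $d(A)=m-\rk_X(A)$ where $\rk_X$ is the rank function of the vector matroid of~$X$. Writing $M=M^{\mathrm{ess}}\oplus(\text{empty matroid for }P)$ as in Lemma~\ref{lem:essential}, $P$ is free of dimension $d(E)=m-\rk_X(E)$, so by the second assertion $\cork_{\underline M}(A)=\dim M^{\mathrm{ess}}(A)=d(A)-d(E)=\rk_X(E)-\rk_X(A)$; that is, $\underline M$ is exactly the vector matroid of~$X$ and hence is realizable over~$\K$. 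Conversely, if $\underline M$ is realizable over~$\K$, pick a realization by vectors spanning $\K^{r}$ with $r=\rk_{\underline M}(E)$; these same vectors, placed in $\K^r$ (or in $\K^r\oplus P$ to recover a non-essential~$M$), form a list $X$ with $\dim M_X(A)=r-\rk_X(A)=\cork_{\underline M}(A)=\dim M(A)$, whence $M_X=M$ and $M$ is realizable over~$\K$. The only substantive point in the whole argument is the submodularity step in the first paragraph and its mirror image in the second; everything else is bookkeeping in the correspondence between modules over a field and their dimensions.
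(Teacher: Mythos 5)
Your proposal is correct and follows essentially the same route as the paper: reduce modules over $\K$ to their dimensions, identify axiom (M1) with the unit-increase corank axiom and axiom (M) with local submodularity via $\dim\langle x,y\rangle\le\dim\langle x\rangle+\dim\langle y\rangle$ (and, for the converse, the realizability of any admissible value of $\dim\langle x,y\rangle$ between the max and the sum), then read off realizability from the vector-configuration description. You spell out the converse construction and the realizability equivalence in more detail than the paper, but the substance is identical.
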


\begin{proof}
The finitely generated modules over $\K$ are the
finite-dimensional $\K$-vector spaces, which are completely classified up to isomorphism
by dimension.  So we may replace $M(A)$ by its $\K$-dimension without losing information.

We now check that the conditions on the dimensions of the $M(A)$
given by axiom (M) and the essential condition are equivalent to
the following set of rank axioms for matroids, recast in terms of a
corank function $\cork:2^E\to\bb N$:
\begin{enumerate}\renewcommand{\labelenumi}{(C\arabic{enumi})}\setcounter{enumi}{-1}
\item $\cork(E) = 0$.
\item For $A\subseteq E$ and $b\in E\setminus A$, $\cork(A)-\cork(Ab)$ equals 0 or 1.
\item For $A\subseteq E$ and $b\neq c\in E\setminus A$,
\[\cork(A)+\cork(Abc)\geq\cork(Ab)+\cork(Ac).\]
\end{enumerate}
Axiom (C0) is the ``essential'' condition.
For $x$ in a $\K$-vector space $V$, the difference $\dim V-\dim(V/\<x\>)$
equals zero if $x$ is zero and one otherwise, so that (C1) is equivalent
to~(M1).

Finally, in axiom (M), let the singly-generated subspaces
$K=(x)$ and~$L=(y)$ be the respective kernels of
$M(A)\to M(Ab)$ and $M(A)\to M(Ac)$.  Then $M(Abc) = M(A) / (K\cup L)$.  
By arranging $K$ and $L$ suitably, their sum $K + L$
can be chosen to have any dimension from $\max(\dim K,\dim L)$
to $\dim K+\dim L$ inclusive (except those that exceed $\dim M(A)$), but no others.
That is, the only conditions on $\dim M(Abc)$ in terms of the other dimensions are
the monotonicity conditions $\dim M(Abc)\leq\min(\dim M(Ab), \dim M(Ac))$,
and the submodularity condition
\[\dim M(A)+\dim M(Abc)\geq \dim M(Ab)+\dim M(Ac),\]
which is (C2).  
Since (C1) and/or (M1) 
implies the monotonicity conditions, we have the desired equivalence.

The realizability claim is already proved by
our prior observation that a realized matroid over~$\K$
embodies a $\K$-vector configuration.
\end{proof}

Let $R\to S$ be a map of rings. Then every matroid over $S$ is naturally also a matroid over $R$. Furthermore, given such a map $R\to S$, the tensor product $\mbox{---}\otimes_R S$
is a functor $\fgMod{R}\to\fgMod{S}$.  One can use this to perform
\emph{base change} of matroids over~$R$.
If $M$ is a matroid over~$R$, define $M\otimes_R S$ be the composition of
$M$ with $\mbox{---}\otimes_R S$, so that
\[(M\otimes_R S)(A) = M(A)\otimes_R S\] for all~$A$.
As with other uses of the tensor product, we will omit the subscript $R$ in the notation where
this causes no unclarity.

\begin{proposition}\label{p:base change}
If $M$ is a matroid over $R$, then $M\otimes_R S$ is a matroid over $S$.
\end{proposition}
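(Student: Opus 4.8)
The plan is to verify axiom (M) for $M \otimes_R S$ directly, by applying the tensor functor to the isomorphisms guaranteed by axiom (M) for $M$. The key algebraic fact needed is that for a f.g.\ $R$-module $N$ and an element $x \in N$, base change commutes with quotienting by a cyclic submodule: we have $(N/(x)) \otimes_R S \cong (N\otimes_R S)/(\bar x)$, where $\bar x$ denotes the image of $x$ under the canonical map $N \to N\otimes_R S$ (sending $x$ to $x\otimes 1$). This is an instance of right-exactness of $\mbox{---}\otimes_R S$: applying it to the presentation $R \xrightarrow{\cdot x} N \to N/(x) \to 0$ (more precisely, to $Rx \hookrightarrow N \to N/(x)\to 0$, using that $Rx$ is the image of the first map) yields a right-exact sequence $S\bar x \to N\otimes_R S \to (N/(x))\otimes_R S \to 0$, and the image of $S\bar x$ in $N\otimes_R S$ is exactly the cyclic submodule $(\bar x)$ generated by $x\otimes 1$.

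The same argument applies to quotients by two cyclic submodules: tensoring $N \to N/(x,y) \to 0$ (with kernel the submodule generated by $x$ and $y$) shows $(N/(x,y))\otimes_R S \cong (N\otimes_R S)/(\bar x,\bar y)$. With these two facts in hand, the verification is mechanical. Fix $A\subseteq E$ and $b,c\in E$, and let $x=x(b,c)$, $y=y(b,c)\in A$ be the elements supplied by axiom (M) for $M$, so that $M(Ab)\cong M(A)/(x)$, $M(Ac)\cong M(A)/(y)$, and $M(Abc)\cong M(A)/(x,y)$. Tensoring the first isomorphism with $S$ and applying the cyclic-quotient fact gives
\[
(M\otimes_R S)(Ab) = M(Ab)\otimes_R S \cong \bigl(M(A)/(x)\bigr)\otimes_R S \cong \bigl(M(A)\otimes_R S\bigr)/(\bar x) = (M\otimes_R S)(A)/(\bar x),
\]
and similarly $(M\otimes_R S)(Ac)\cong (M\otimes_R S)(A)/(\bar y)$ and $(M\otimes_R S)(Abc)\cong (M\otimes_R S)(A)/(\bar x,\bar y)$. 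Since $\bar x$ and $\bar y$ are elements of $(M\otimes_R S)(A)$ arising from $x,y\in A$, these three isomorphisms are exactly what axiom (M) demands for $M\otimes_R S$ (with the roles of $x(b,c)$, $y(b,c)$ played by $\bar x$, $\bar y$). Finite generation of $(M\otimes_R S)(A) = M(A)\otimes_R S$ over $S$ is immediate, since the tensor functor carries f.g.\ modules to f.g.\ modules.

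There is no serious obstacle here; the only point requiring a moment's care is the compatibility of the cyclic submodules under base change — namely that the image of the submodule $(x)\subseteq M(A)$ generates the cyclic submodule $(\bar x)\subseteq M(A)\otimes_R S$ — which is handled cleanly by the right-exactness computation above and does not require flatness of $S$ over $R$. One should also note that the isomorphisms $M(Ab)\cong M(A)/(x)$ etc.\ are only isomorphisms of abstract modules (axiom (M) does not posit a coherent system of maps), but this is harmless: applying a functor to an isomorphism yields an isomorphism, so tensoring each of the three displayed isomorphisms separately suffices.
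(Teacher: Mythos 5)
Your proof is correct, and it runs on the same engine as the paper's --- right-exactness of $\mbox{---}\otimes_R S$ --- but the execution is genuinely a bit different. You apply right-exactness directly to the presentations $M(A)\to M(A)/(x)$ and $M(A)\to M(A)/(x,y)$, conclude that each of the three isomorphisms demanded by axiom (M) tensors to the corresponding isomorphism over $S$, and exhibit $x\otimes 1$, $y\otimes 1$ as the new witnesses; since (M) only asserts isomorphisms of modules (no coherent system of maps), tensoring each isomorphism separately is, as you note, enough. The paper instead splits the check: it verifies (M1) by observing that the kernel of $M(A)\otimes S\to M(Ab)\otimes S$ is a quotient of a cyclic $S$-module (again right-exactness), and then verifies the full axiom (M) by noting that the tensor functor, being a left adjoint, preserves the pushout square that (M) provides. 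Your route has the advantage of working entirely within the element-based Definition~\ref{def:matroid} and not invoking the pushout reformulation from Section~\ref{tg}; the paper's route is the one that transports to the categorical setting it develops later. Your remarks that flatness of $S$ is not needed and that finite generation is preserved are both correct.
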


\begin{proof}
Let $0\to K\to N\to N'\to 0$ be a short exact sequence of $R$-modules,
with $K$ cyclic.
Tensor product being right exact, we get an exact sequence
$K\otimes S\to N\otimes S\to N'\otimes S\to 0$, so the kernel of
$N\otimes S\to N'\otimes S$ is a quotient of the cyclic $S$-module $K\otimes S$,
and is therefore cyclic.  Therefore the maps from $M(A)\otimes S$
to $M(Ab)\otimes S$ and to $M(Ac)\otimes S$ have cyclic kernels,
and this establishes condition (M1) for $M\otimes S$.

Since tensor product is a left adjoint functor (to $\Hom$), it preserves pushouts,
including the pushout from axiom (M) for $M$:
\[\xymatrix{
\ar@{}[dr]|{\mbox{\LARGE$\lrcorner$}}
M(A    )\ar[r]\ar[d] & M(Ab)\ar[d] \\
M(Ac)\ar[r] & M(Abc)
}\]
This proves axiom (M) for $M\otimes S$.
\end{proof}

Two special cases of this construction will be of fundamental importance for our theory.
\begin{enumerate}
\item For every prime ideal $\mf{m}$ of $R$, let $R_{\mf m}$ be the localization of $R$ at $\mf m$. We call 
$$M_{\mf m}\doteq M\otimes_R R_{\mf m}$$
the \emph{localization} of $M$ at $\mf m$.
\item If $R$ is a domain, let $\Frac(R)$ be the fraction field of $R$. Then we call
$$M_{\text{gen}}\doteq M\otimes_R \Frac(R)$$
the \emph{generic matroid} of $M$.
\end{enumerate}

A \emph{generic loop} or \emph{generic coloop} of a matroid over $R$ 
is a loop, respectively a coloop, of its generic matroid.  
Thus $a$ is a generic coloop of $M$ if and only if
$M(E\setminus\{a\})$ has a nontrivial projective summand.  

Our approach will be much based on studying the matroid $M$ via these localizations.
The localizations, matroids over $R_{\mf m}$, induce matroids over the residue fields $R_{\mf m}/(\mf m)$;
the generic matroid $\Mg$ is also over a field.  Both constructions thus give rise
to ordinary matroids (as the decomposition in \ref{lem:essential} is unique).


\subsection{Towards generalizations}\label{tg}
Classical matroids can be defined in several, ``cryptomorphic'' ways, for instance by giving axioms satisfied by the bases, or circuits, or independent sets. As we have seen, our definition of matroids over rings generalizes
the (co)rank function definition of matroids.  This makes the following question natural.
\begin{question}
Are there cryptomorphic definitions of matroids over rings?
\end{question}
Certain axiomatizations show some promise: for example, 
the discussion before Remark~\ref{refereequestion}
suggests that an axiomatization of polytopes of matroids over a DVR may be approachable.

The focus of this section is instead on the potential to generalize
the ``base'' of the construction, away from rings.  
With an eye to this, we will recast the axioms in a more categorical fashion, 
without reference to elements.
We also discuss polymatroids.
To begin, matroids over rings may be taken to be instead over affine varieties:

\begin{example}
Let $X$ be an affine algebraic variety, and $R=\mathcal O_X$ be the ring of regular functions on $X$. For every point of $X$, the functions vanishing at it form a maximal ideal of $R$. Then a matroid over $R$ can be seen as a \emph{bundle of matroids} over $X$.
Of course, since many of our general results hold when $R$ is a Dedekind ring, the first case to be investigated is when $X$ is an affine algebraic curve. 
In this case, by Propositions \ref{p:Dedekind M1} and~\ref{p:Dedekind M2}, $M$ is a matroid over $\mathcal O_X$ if and only if for every point of $X$ the corresponding localization $\Mm$ is a matroid over ${\mathcal O_X}_{\mf m}$,  $M_{gen}$ is a matroid over $\Frac(\mathcal{O}_X)$, and it is verified the global condition the Picard group $\Pic(X)$ stated in Proposition \ref{p:Dedekind M1}.
\end{example}

There appears to be no obstruction to patching these 
``bundles of matroids''
in a sheaf-theoretic fashion to yield (bundles of) matroids
over arbitrary schemes.  Proper investigation of
these is left to future work.

The next definition is Definition~\ref{def:matroid}
recast without reference to elements, with an eye towards
possible categorical generalizations; however,
reference is still made to cyclic modules.  
We have also separated out the statement (M1).

\begin{definition}
Let $R$ be a commutative
ring.  A \emph{matroid over $R$} on the ground set $E$
is a function $M$ assigning to each subset $A\subseteq E$
a finitely-generated $R$-module $M(A)$ 
satisfying the following axioms:
\begin{enumerate}\renewcommand{\labelenumi}{(M\arabic{enumi})}
\item For any $A\subseteq E$ and $b\in E\setminus A$, there exists a surjection
$M(A)\twoheadrightarrow M(A\cup\{b\})$ whose kernel is a cyclic submodule of $M(A)$.
\begin{samepage}
\item For any $A\subseteq E$ and $b,c\in E$, there exists a pushout
\[\xymatrix{
\ar@{}[dr]|{\mbox{\LARGE$\lrcorner$}}
M(A    )\ar[r]\ar[d] & M(A    \cup\{b\})\ar[d] \\
M(A    \cup\{c\})\ar[r] & M(A    \cup\{b,c\})
}\]
where all four morphisms are surjections with cyclic kernel.
\end{samepage}
\end{enumerate}
\end{definition}

We have already seen the pushout in the proof of Proposition~\ref{p:base change}.
Conversely, using the element-based criterion for pushouts, the fact that
\[\xymatrix{
M(A)\ar^\varphi[r]\ar_\psi[d] & M(Ab)\ar^{\psi'}[d] \\
M(Ac)\ar_{\varphi'}[r] & M(Abc)
},\]
is a pushout diagram of modules can be restated as
$$M(Abc)\simeq\frac{M(Ac)\oplus M(Ab)}{  \left\{  \big(\psi (x), -\varphi (x)\big), x\in M(A) \right\} }.$$
The fact that the maps are surjections implies
$$M(Abc)\simeq M(A) / (\ker \varphi, \ker \psi).$$
where by $(\ker \varphi, \ker \psi)$ we denote the submodule of $M(A)$ generated by the two kernels.
Then the elements $x$ and $y$ required by axiom~(M) can be chosen as generators
of $\ker\varphi$ and $\ker\psi$.

Realizability may also be recast; notionally, a realizable matroid
is still one in which the choices in axioms (M1) and~(M2) may be made globally.  
Let $\mc B(E)$ be the category of
the Boolean poset of subsets of~$E$, where inclusions of sets are the morphisms.

\begin{definition}
A matroid $M$ over~$R$ is \emph{realizable} if it is
the map on objects of some functor $F:\mc B(E)\to\fgMod{R}$,
and axioms~(M1) and~(M2) are satisfied by choosing the morphisms $F(A\to Ab)$.
A \emph{realization} of~$M$ is a choice of such an~$F$.
\end{definition}

Indeed, if a matroid $M$ over~$R$ is realizable in the above sense, 
corresponding to the functor $F$, 
then it is also realizable as defined before: 
the matroid $M_X$ of a vector configuration $(N,X=\{x_a\})$,
where $N$ is $F(\emptyset)$, and $x_a$ is a generator
of $\ker F(\emptyset\to\{a\})$ for each $a\in E$. 
Indeed, in this above setting, the pushout axiom~(M2) applied to $F$ guarantees that
equation~\eqref{eq:matroid from vectors} holds for all $A\subseteq E$.
The converse is similarly easy.

If we remove the cyclicity requirement, we wind up with polymatroids.

\begin{definition}
A \emph{polymatroid over $R$} on the ground set $E$
is a function $M$ assigning to each subset $A\subseteq E$
a finitely-generated $R$-module $M(A)$
such that
\begin{enumerate}
\item[(PM)] for every subset $A\subseteq E$ and elements
$b,c\in E$, there exist submodules
$K$ and $L$ of $A$ satisfying
\begin{align*}
M(A\cup\{b\}) &\cong M(A)/K \\
M(A\cup\{c\}) &\cong M(A)/L \\ 
M(A\cup\{b,c\}) &\cong M(A)/(K+L).
\end{align*}
\end{enumerate}
\end{definition}

Axiom (PM) is also equlvalent to axiom (M2) with the words ```with cyclic kernel'' stricken.
We note that Proposition~\ref{p:classical}
is true for polymatroids as well, and its proof goes through \emph{mutatis mutandis},
when the corank axiom (C1) is replaced with
\begin{enumerate}
\item[(PC)] For $A\subseteq E$ and $b\in E\setminus A$, $\cork(A)\geq\cork(Ab)$.
\end{enumerate}

\begin{example}\label{ex:polymatroids and matroids}
Not every polymatroid over~$R$ satisfying (M1) is a matroid over~$R$.
For a counterexample, let $R=\Z$.  There is a pushout diagram of surjections
\begin{equation}\label{eq:M2'}
\xymatrix{
\ar@{}[dr]|{\mbox{\LARGE$\lrcorner$}}
\Z\oplus\Z/2\Z\ar[r]\ar[d] & \Z/2\Z\ar[d] \\
\Z/4\Z\ar[r] & \Z/2\Z
}
\end{equation}
in which the top map has kernel $\<(2,0), (0,1)\>$
and the left map has kernel $\<(2,1)\>$.
Moreover there exist surjections $\Z\oplus\Z/2\Z\twoheadrightarrow\Z/2\Z$
with cyclic kernel: there are two such,
one with kernel $\<(1,0)\>$ and one with kernel $\<(1,1)\>$.
However, neither of these maps can be fitted into a pushout diagram
of surjections with groups isomorphic to~\eqref{eq:M2'}; both those
pushouts are the trivial group.
So diagram~\eqref{eq:M2'} corresponds to a function from $\mc B(2)$ to $\fgMod{\Z}$
that satisfies (PM) and (M1) but not (M2).
\end{example}

\begin{question}
There are various ways to axiomatize matroids using rank functions.
It is possible to state the submodularity axiom for matroids as our axiom (C2)
parallelling (M2), that is
\[\rank(Ab) + \rank(Ac) \geq \rank(A) + \rank(Abc),\]
but the more usual statement of this axiom doesn't
restrict to covers in the Boolean lattice: it asserts that
\[\rank(A) + \rank(B) \geq \rank(A\cap B) + \rank(A\cup B)\]
for all $A,B\subseteq E$.  Similarly, the fact that rank
is nondecreasing and bounded by cardinality can be framed
on covers, like our (M1), or on all containments.
\emph{Is there an axiomatization of matroids over~$R$
which replaces (M1) and~(M2) with axioms on 
all containments, respectively pairs of sets?}

Example~\ref{ex:polymatroids and matroids} suggests that such an axiom
system would still need to make reference to the number of generators of kernels.
It is also conceivable that the axioms sought would only agree over 
Dedekind domains: the behaviour exhibited in Example~\ref{ex:2D not combinatorial} below
for a non-Dedekind domain
interferes with na\"ive attempts to patch pushout squares together.
\end{question}

\section{Dedekind domains}\label{sec:Dedekind}
The theory arising from Definition~\ref{def:matroid} 
makes a good parallel
to the theory of classical matroids when $R$ is a Dedekind domain,
and this is the case we will give most attention to in the following
sections.
In this section we review some properties of Dedekind domains for use in
following sections.

One well-behaved feature of Dedekind domains in our setting is  Lemma~\ref{lem:Dedekind kernels}.
Example~\ref{ex:2D not combinatorial} shows that this fails in the two-dimensional setting.

\begin{lemma}\label{lem:Dedekind kernels}
Let $R$ be a Dedekind domain.
Given two $R$-modules $N$ and $N'$, all cyclic modules
that appear as kernels of surjections $N\twoheadrightarrow N'$
are isomorphic.
\end{lemma}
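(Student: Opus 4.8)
The plan is to decompose both $N$ and $N'$ using the structure theory of finitely generated modules over a Dedekind domain, and reduce the statement to a rank comparison plus a determinant-of-torsion comparison. Recall that any f.g.\ module over a Dedekind domain $R$ splits (non-canonically) as a direct sum of a torsion part and a projective part, and the projective part is determined by its rank together with a class in $\Pic(R)$; moreover the torsion part is a direct sum of cyclic modules $R/\mf p^{e}$. First I would reduce to the case where $N'$ is a quotient of $N$ by a cyclic submodule $C = (x)$, which is exactly the situation in which such kernels arise. The quantity I want to show is an isomorphism invariant is the cyclic module $C$ itself, i.e.\ I must show: if $N/(x) \cong N/(y)$ for cyclic submodules $(x),(y)\subseteq N$, then $(x)\cong(y)$ as $R$-modules.

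The key observation is that a cyclic $R$-module is either $R$ itself (when its annihilator is zero) or $R/\mf a$ for a nonzero ideal $\mf a$ (when it is torsion); and the isomorphism type of $R/\mf a$ is recorded by $\mf a$, which in a Dedekind domain is determined by its factorization into prime ideals. So I would argue in two steps. Step one: compare ranks. We have $\rk N = \rk N' + \rk C$ (torsion contributes nothing to rank, and $C$ has rank $1$ if it is $\cong R$, rank $0$ if it is torsion), so whether $C$ is free or torsion is forced by $\rk N - \rk N'$, independent of the choice of surjection. Step two: if $C$ is torsion, say $C \cong R/\mf a$, I need to pin down $\mf a$. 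Here I would localize: for each maximal ideal $\mf p$, the module $(N/(x))_\mf p = N_\mf p/(x)_\mf p$, and over the DVR $R_\mf p$ the structure theorem gives a list of elementary divisors. Comparing the torsion of $N_\mf p$ with that of $N'_\mf p$ in the graded pieces determined by powers of $\mf p R_\mf p$, the "missing" cyclic summand has a well-defined $\mf p$-adic valuation $v_\mf p(\mf a)$ — this is forced because, over a DVR, quotienting by a single cyclic submodule can only decrease each elementary divisor, and the multiset of elementary divisors of $N'_\mf p$ determines which one was cut down and by how much, given that of $N_\mf p$. Since this holds at every $\mf p$ and $\mf a = \prod_\mf p \mf p^{v_\mf p(\mf a)}$, the ideal $\mf a$, hence $C$, is determined.

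The main obstacle is Step two's claim at a single DVR: that the isomorphism type of a one-step cyclic quotient $N_\mf p/(z)$ determines the cyclic module $(z)$. This is not quite automatic because $(z)$ can sit inside $N_\mf p$ in many ways, and a priori different positions could give isomorphic quotients while $(z)$ itself varies — but in fact over a DVR, writing $N_\mf p = R_\mf p^{\,r}\oplus\bigoplus_i R_\mf p/\mf p^{a_i}$ with $a_1\le a_2\le\cdots$, a cyclic submodule has the form $(z)$ with $(z)\cong R_\mf p/\mf p^{c}$ (or $\cong R_\mf p$), and one checks that the quotient's elementary divisor multiset is obtained from that of $N_\mf p$ by exactly one of a limited set of moves whose inverse recovers $c$ uniquely; I would verify this by a direct but short computation with the Smith normal form / elementary divisors, using that $R_\mf p$ is a PID. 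With that local statement in hand, the patching over all $\mf p$ and the rank bookkeeping are routine, and the lemma follows.
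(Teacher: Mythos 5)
Your overall strategy coincides with the paper's: localize at every maximal ideal, let the rank difference decide whether the cyclic kernel is free or torsion, determine the torsion case locally over each DVR, and reassemble the annihilator ideal prime by prime. The rank comparison and the final patching are fine as you describe them. The one step that would fail as justified is the local uniqueness claim in your Step two. The picture that a cyclic quotient over a DVR ``cuts down exactly one elementary divisor'' is false: take $N_{\mf p}=R_{\mf p}/\mf p\oplus R_{\mf p}/\mf p^{3}$ and $z=(1,t)$ with $t$ a uniformizer. Then $\<z\>\cong R_{\mf p}/\mf p^{2}$ and $N_{\mf p}/\<z\>\cong R_{\mf p}/\mf p^{2}$, so the elementary divisors change from $(3,1)$ to $(2)$ --- both entries move, and there is no single summand that ``was cut down.'' (Proposition~\ref{p:DVR M1} describes precisely which wholesale changes of invariants a cyclic quotient can effect; they are governed by the Pieri rule, not by modifying one part.) So the bookkeeping you propose to verify by a Smith normal form computation is not well posed as stated.

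Fortunately the fact you actually need is much cheaper. A cyclic torsion module over a DVR is determined up to isomorphism by its length alone, and once the ranks of $N_{\mf p}$ and $N'_{\mf p}$ agree the kernel lies in $\tors{(N_{\mf p})}$, so its length is forced to equal $\operatorname{length}\tors{(N_{\mf p})}-\operatorname{length}\tors{(N'_{\mf p})}$ by additivity of length in the exact sequence. This one-line count is exactly what the paper's proof does at each localization; substituting it for your elementary-divisor analysis repairs the argument, and the rest of your plan then goes through.
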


\begin{proof}[Proof of Lemma~\ref{lem:Dedekind kernels}]
Suppose we have two surjections
$N\twoheadrightarrow N'$ with kernels respectively
generated by elements $x$ and $y$ of~$N$.
We show that $\<x\>$ and $\<y\>$
are isomorphic as $R$-modules with the isomorphism given by $x\mapsto y$.
It is enough to show that this map is an isomorphism after
localizing at every maximal prime $\mf m$ of~$R$.
Now, the isomorphism class of $\<x\>_{\mf m}$ can
be read off of the other two modules in the localized exact sequence
\[0\to\<x\>_{\mf m}\to N_{\mf m}\to N'_{\mf m}\to 0.\]
To be precise, if the rank of $N_{\mf m}$ exceeds that of $N'_{\mf m}$,
then $\<x\>_{\mf m}\cong R_{\mf m}$ is free; otherwise,
$\<x\>_{\mf m}$ is torsion and is determined up to isomorphism
by its $(R/\mf m)$-dimension,
which is the difference of the dimensions of the torsion parts
of $N_{\mf m}$ and~$N'_{\mf m}$.
The isomorphism class of $\<y\>_{\mf m}$ is determined
in the same way from the same data, so that $\<x\>_{\mf m}\cong\<y\>_{\mf m}$
for all~$\mf m$.  And since $x$ and $y$ are generators,
the isomorphism can be taken to send $x\mapsto y$.
\end{proof}

\begin{example}\label{ex:2D not combinatorial}
Let $R = \K[x,y]/\<x,y\>^2$, the ring of two-dimensional first-order jets
(which is imprecisely the ``smallest'' two-dimensional ring).
Let $N$ be the length~3 $R$-module $\<x,y\>/\<x^2,y^2\>$,
where these $x$ and $y$ should be read as elements not of~$R$ but of~$\K[x,y]$:
thus $N$ is isomorphic to the so-called Matlis dual of $R$.
Then the quotients $N/\<x\>$ and $N/\<y\>$ are both isomorphic to $\K$,
but their kernels $\<x\>/\<x^2,xy^2\>$ and $\<y\>/\<x^2y,y^2\>$ are not isomorphic.
\end{example}

We next recall some structural results about $R$-modules.
Given an $R$-module $N$, let $\tors{N}\subseteq N$ denote the submodule
of its torsion elements, and $\projpart{N}$ denote 
the projective module $N/\tors{N}$. Then $N$ can be described as follows.

\begin{proposition}\label{p:Dedekind structure} \cite[exercises~19.4--6]{Eisenbud}
Let $R$ be a Dedekind domain. Every f.g.\ $R$-module $N$ is the direct sum of its torsion submodule $\tors{N}$ and of a
projective module isomorphic to $\projpart{N}$.

Every torsion module may be written uniquely up to isomorphism
as a sum of submodules $R/\mf m^k$ for $\mf m$ a maximal prime of~$R$ and $k\in\Z_{>0}$.
It also may be written uniquely as a sum of submodules
$R/I_1\oplus\cdots\oplus R/I_m$ (its invariant factors)
for a chain $I_1\subseteq\cdots\subseteq I_m$ of ideals of~$R$.

Every nonzero projective module
is uniquely isomorphic to $R^h\oplus I$
for some $h\geq 0$ and nonzero ideal~$I$, up to differing isomorphic choices of~$I$.
In particular, for ideals $I$ and $J$, we have $I\oplus J \cong R \oplus (I\otimes J)$, so that the direct sum of any two projective modules of ranks $i, j\geq 0$ is given by
\begin{equation}\label{sum-proj}
(R^{i-1}\oplus I)\oplus (R^{j-1}\oplus J) \simeq R^{i+j-1}\oplus (I\otimes J).
\end{equation}

\end{proposition}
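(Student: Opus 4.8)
The plan is to establish the three assertions in turn, in each case reducing to the classical structure theory of modules over a principal ideal domain by localizing $R$ at its maximal primes; this is legitimate because the localization $R_{\mf m}$ of a Dedekind domain at a nonzero maximal ideal is a DVR, hence a PID, and $R/\mf m^k\cong R_{\mf m}/\mf m^k R_{\mf m}$. Since the statement is classical (cf.\ the cited exercises in \cite{Eisenbud}), I will only sketch the argument.

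First I would split off the torsion. The module $\projpart N=N/\tors N$ is finitely generated and torsion-free, and I claim it is projective: embedding it into $\projpart N\otimes_R\Frac(R)\cong\Frac(R)^n$ and clearing denominators realizes it as a submodule of some $R^n$, and one shows by induction on $n$ that every finitely generated submodule of $R^n$ is a direct sum of nonzero ideals. Indeed, projecting onto the last coordinate, the image is an ideal of $R$; being nonzero it is invertible, hence projective, so the resulting short exact sequence splits, and the inductive hypothesis handles the kernel. A direct sum of invertible ideals is projective, which proves the claim, and then $0\to\tors N\to N\to\projpart N\to 0$ splits, giving $N\cong\tors N\oplus\projpart N$.

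Next I would classify the projectives, which is the theorem of Steinitz. Re-running the induction above shows that a nonzero finitely generated projective module $P$ of rank $h+1$ is a direct sum of $h+1$ nonzero ideals; to collapse this to the stated normal form I would use the identity $I\oplus J\cong R\oplus(I\otimes_R J)$ for nonzero ideals. After scaling one may assume $I+J=R$, so the map $I\oplus J\to R$, $(a,b)\mapsto a+b$, is a split surjection with kernel $I\cap J=IJ$; and since every nonzero ideal of a Dedekind domain is invertible, the canonical map $I\otimes_R J\to IJ$ is an isomorphism. Iterating yields $P\cong R^h\oplus I$ with $[I]$ equal to the product in $\Pic(R)$ of the classes of the original ideals, which is exactly \eqref{sum-proj}. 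For uniqueness, the rank is visibly an invariant, and the class $[I]\in\Pic(R)$ is recovered as the class of $\bigwedge^{h+1}P$, so $R^h\oplus I\cong R^h\oplus I'$ forces $I\cong I'$. For the torsion modules: a finitely generated torsion module $T$ has nonzero annihilator, which lies in only finitely many maximal ideals $\mf m_1,\dots,\mf m_r$, and primary decomposition gives $T\cong\bigoplus_i T_{\mf m_i}$ with $T_{\mf m_i}\cong T\otimes_R R_{\mf m_i}$; each $T_{\mf m_i}$ is a finitely generated torsion module over the PID $R_{\mf m_i}$, so $T_{\mf m_i}\cong\bigoplus_j R_{\mf m_i}/\mf m_i^{k_{ij}}R_{\mf m_i}\cong\bigoplus_j R/\mf m_i^{k_{ij}}$ with the exponent multiset uniquely determined by the PID structure theorem. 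This gives the first normal form and its uniqueness; the invariant-factor form follows by the usual regrouping (set $I_m=\prod_i\mf m_i^{\max_j k_{ij}}$, peel off the corresponding summands, whose direct sum is $R/I_m$ by the Chinese Remainder Theorem, and recurse), with uniqueness of the chain $I_1\subseteq\cdots\subseteq I_m$ again inherited from the PID case.

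I expect the one genuinely delicate point to be the Steinitz step: the cancellation-type identity $I\oplus J\cong R\oplus IJ$ together with the fact that the isomorphism type of a projective module over a Dedekind domain is determined by nothing more than its rank and its class in $\Pic(R)$. The remaining steps are routine once one localizes and appeals to the structure theory over a PID.
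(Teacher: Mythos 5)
Your sketch is correct and follows the standard argument (essentially the one in the Eisenbud exercises the paper cites); the paper itself gives no proof, deferring entirely to that reference. The three main steps you identify — (i) showing a f.g.\ torsion-free module is projective by embedding it in $R^n$ and inducting on coordinate projections, (ii) Steinitz's normal form for projectives via $I\oplus J\cong R\oplus(I\otimes J)$ together with top exterior power for uniqueness, and (iii) localization to DVRs plus CRT for the torsion part — are exactly what the cited exercises ask for. Two small looseness points worth noting: in the induction on $R^n$ the projection onto the last coordinate may have image zero (handled by dropping a coordinate), and "after scaling one may assume $I+J=R$" is shorthand for replacing $J$ by an isomorphic integral ideal coprime to $I$, which needs the prime-avoidance/approximation lemma for Dedekind domains (this is precisely Lemma~\ref{lem:disjoint} in the paper). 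Neither gap is substantive.
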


We recall the following definitions. The \emph{Picard group} of $R$, $\Pic(R)$, 
is the group of fractional ideals of~$R$ under multiplication, modulo the subgroup of principal ideals.
If $R$ is Dedekind, then $\Pic(R)$ is isomorphic to the group of the isomorphism classes of f.g.\ projective modules of rank 1, with product induced by the tensor product. 
If $P$ is a projective module of rank $n$, the exterior algebra $\Lambda^n P$ is a f.g.\ projective module of rank ${n \choose n}=1$. We call \emph{determinant}, and denote by $\det(P)$, its class in $\Pic(R)$.

\begin{example}\label{ex:Pic}
The Picard group is trivial in many familiar Dedekind rings, 
including the cases that this paper works out in detail: $\Z$ and discrete valuation rings.
So we name here some examples where
it is not.  Among rings of integers of number fields we have examples
like $R = \Z[\sqrt{-5}]$, whose Picard group is isomorphic to $\Z/2\Z$, 
with the non-identity class necessarily represented by any non-principal ideal, for instance
$(2,1+\sqrt{-5})$.

Another example, among rings of regular functions, is the coordinate ring
$R=\mathbb C[x,y]/(y^2 - x(x-1)(x-\lambda))$ of an elliptic curve punctured
at the identity, whose Picard group is isomorphic to the group of points
of the (complete) elliptic curve as an abelian variety, that is
to the additive group $\C$ modulo an embedded lattice $\Z^2$:
the ideal $(x-x_0,y-y_0)$ of the point $(x_0,y_0)$ represents its class in $\Pic(R)$.
\end{example}

We will also find useful a description of the \emph{algebraic $K$-theory group} $K_0(R)$
of f.g.\ $R$-modules: that is, the abelian group
generated by classes $[N]$ of f.g.\ $R$-modules, modulo the relations 
$[N]=[N']+[N'']$ for any exact sequence 
\[0\to N'\to N\to N''\to 0.\]
\begin{proposition}\label{p:Dedekind K}
There is an isomorphism of groups
$$\Phi: K_0(R)\longrightarrow\Z\oplus\Pic(R).$$
\end{proposition}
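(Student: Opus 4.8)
The plan is to exhibit explicit homomorphisms $\Phi\colon K_0(R)\to\Z\oplus\Pic(R)$ and $\Psi\colon\Z\oplus\Pic(R)\to K_0(R)$ and to check that they are mutually inverse. The first coordinate of $\Phi$ is the \emph{rank}: for a f.g.\ $R$-module $N$, put $\rank N=\dim_{\Frac R}\bigl(N\otimes_R\Frac R\bigr)$. Since $\mbox{---}\otimes_R\Frac R$ is exact, $\rank$ is additive on short exact sequences, so it descends to a homomorphism $K_0(R)\to\Z$. The second coordinate is a \emph{determinant} homomorphism $\delta\colon K_0(R)\to\Pic(R)$. Since a Dedekind domain is hereditary, every f.g.\ module $N$ admits a projective resolution $0\to P_1\to P_0\to N\to 0$ of length one (take $P_0$ free of finite rank surjecting onto $N$; its kernel $P_1$ is f.g.\ and projective), and I would set $\delta(N)=\det(P_0)\otimes\det(P_1)^{-1}$, using the determinant of Proposition~\ref{p:Dedekind structure} and the standard identity $\det(P\oplus Q)\cong\det P\otimes\det Q$. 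Independence of the resolution follows from Schanuel's lemma ($P_0\oplus P_1'\cong P_0'\oplus P_1$ for two length-one resolutions) on taking determinants, and additivity of $\delta$ on $0\to N'\to N\to N''\to 0$ follows from the horseshoe lemma, which produces a length-one projective resolution of $N$ whose terms are the direct sums of chosen resolution terms for $N'$ and $N''$. Then $\Phi([N])=(\rank N,\delta(N))$ is a well-defined group homomorphism.

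In the other direction I would define $\Psi(n,[I])=(n-1)[R]+[I]$, where $I$ is any (integral) fractional ideal representing the given class in $\Pic(R)$. That $\Psi$ is well defined and a homomorphism is precisely the content of the relation $I\oplus J\cong R\oplus(I\otimes J)$ of Proposition~\ref{p:Dedekind structure}, which in $K_0(R)$ reads $[I]+[J]=[R]+[I\otimes J]$; in particular $[I]\in K_0(R)$ depends only on the isomorphism class of $I$, hence only on its class in $\Pic(R)$.

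It then remains to see that $\Phi$ and $\Psi$ are inverse to one another. The composite $\Phi\circ\Psi$ is the identity by a direct computation: $\rank\bigl((n-1)[R]+[I]\bigr)=n$, and $\delta\bigl((n-1)[R]+[I]\bigr)=\det(R)^{\otimes(n-1)}\otimes\det(I)$, which equals the class $[I]$ since $\det R$ is trivial and $\det I=[\Lambda^1 I]=[I]$ for the rank-one projective $I$. This already forces $\Psi$ to be injective, so it suffices to check that $\Psi$ is surjective, which is immediate from Proposition~\ref{p:Dedekind structure}: a f.g.\ module $N$ splits as $\tors N\oplus\projpart N$, whence $[N]=[\tors N]+[\projpart N]$ in $K_0(R)$; the projective part $\projpart N\cong R^{h}\oplus I$ contributes $[\projpart N]=[R^{h}\oplus I]=\Psi(h+1,[I])$, and each torsion summand $R/\mf m^{k}$ contributes $[R/\mf m^{k}]=[R]-[\mf m^{k}]=\Psi\bigl(0,[\mf m]^{-k}\bigr)$ via the exact sequence $0\to\mf m^{k}\to R\to R/\mf m^{k}\to 0$ and the invertibility of $\mf m^{k}$. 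Hence $\Psi$ is an isomorphism and $\Phi=\Psi^{-1}$.

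The step I expect to take the most care is the construction of $\delta$: one must check that the horseshoe-lemma construction genuinely yields a \emph{length-one} projective resolution of $N$ (so that $\delta$ can be read off it) and invoke Schanuel's lemma in the form that makes $\delta$ independent of the chosen resolution. A more hands-on alternative is to define $\delta(N)=\det(\projpart N)\cdot\prod_i[\mf m_i]^{-k_i}$ directly from the structure theorem and verify additivity on short exact sequences by hand; but checking additivity for non-split sequences this way is noticeably more laborious, so the resolution approach seems preferable.
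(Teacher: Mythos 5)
Your proof is correct, but it takes a different route from the paper's. The paper disposes of the proposition in two citations: it quotes Weibel's computation $K^0(R)\cong\Z\oplus\Pic(R)$ for the Grothendieck group of \emph{projective} modules via $[P]\mapsto(\rk P,\det P)$, and then invokes the regularity of $R$ (via Fulton) to identify $K^0(R)$ with $K_0(R)$. What you have written is essentially a self-contained unpacking of both citations in the special case of a Dedekind domain: your rank and determinant homomorphisms, with well-definedness by Schanuel and additivity by the horseshoe lemma applied to length-one resolutions, are exactly the content hidden in the resolution theorem $K^0\cong K_0$ for a hereditary ring, and your inverse $\Psi(n,[I])=(n-1)[R]+[I]$ together with the relation $[I]+[J]=[R]+[I\otimes J]$ reproduces the Weibel isomorphism. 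All the individual steps check out: the kernel of a surjection from a finite free module is f.g.\ projective because $R$ is hereditary and Noetherian; the horseshoe construction does terminate in degree one since the degree-one syzygies form an extension of zero by zero; and the surjectivity computation $[R/\mf m^k]=[R]-[\mf m^k]=\Psi(0,[\mf m]^{-k})$ is right. The trade-off is the usual one: the paper's proof is shorter and places the result in its proper $K$-theoretic context, while yours is elementary, explicit, and makes visible why $\Phi$ restricted to torsion modules lands in $\{0\}\oplus\Pic(R)$ --- a fact the paper uses implicitly later when it defines $\det(N)$ for arbitrary $N$ via $\Phi$.
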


\begin{proof}
In Corollary~2.6.3 of~\cite{Weibel} 
the $K$-theory group $K^0(R)$ 
of \emph{projective} $R$-modules is shown to be $\Z\oplus\Pic(R)$, 
via the map $$[P] \mapsto (\rk(P), \det(P))$$ as a consequence of Formula
\ref{sum-proj}.
But since $R$ is a regular ring, the natural homomorphism
$K^0(R)\to K_0(R)$ is an isomorphism \cite[\S15.1]{Fulton}.
\end{proof}

In virtue of the isomorphism above, from now on we will denote by $\det(N)$, 
and name \emph{determinant},
the class of any f.g.\ R-module $N$ in the Picard group, i.e. the second summand of $\Phi(N)$. In the same way, by $\rk(N)$ we denote the first summand of $\Phi(N)$: this coincides with the rank of 
$\projpart{N}$, i.e. with
the dimension of $N\otimes\Frac(R)$.

Note in particular that $\Phi$ extends the usual map from invertible ideals to $\Pic(R)$.

The potential nontriviality of this summand $\Pic(R)\subseteq K_0(R)$
has global consequences for matroids over~$R$: see Proposition~\ref{p:cl(M)} below.

\section{Duality}\label{sec:duality}
One of the first notions to be demanded of a putative generalization
of matroids is duality.  
Our construction of duality springs from the
case of realized matroids, where we have Gale duality of vector
configurations.  Some conditions are required on $R$
for this construction to produce a unique dual for any 
matroid over $R$.  One sufficient condition will be that
$R$ is a Dedekind domain, and therefore of global dimension~1;
this case will continue being our primary focus.
An earlier version of this paper asserted the existence of unique duals over local rings of any dimension,
but the proof was flawed.
We do not have a definitive answer to the following 
natural question:

\begin{question}
What is the most general class of rings $R$ for which 
duality of matroids over $R$ is well defined and correctly behaved?  
\end{question}

\addtocounter{theorem}{1}

We give an outline of this section here to provide
some guideposts for readers less familiar with homological algebra.
Duality is defined in Definition~\ref{def:dual};
its relationship to the Gale dual is Proposition~\ref{p:dual realizable}.  
The construction ultimately reduces to 
dualizing a map of modules, in the sense of applying
the functor $\Hom(\mbox{---},R)$.  We produce the needed map by
specifying a maximal chain of sets and composing the
quotient maps provided by axiom (M); 
it is of course necessary to check that this independent of the choices made
(Lemma~\ref{p:dual well-defined}), 
and that the resulting dual satisfies (M) as well (Theorem~\ref{p:dual}).
In the essential case, duality is an involution (Proposition~\ref{p:duality})
which behaves as expected under direct sums and minors (Proposition~\ref{p:duality2}).
Our most concrete description of the dual 
in the Dedekind case is Corollary~\ref{cor:dual}:
as we explain after the statement, this is a generalization of the duality formula 
for corank functions of usual matroids.  We also show in this case that
certain base changes we will use later behave well under duality 
(Proposition~\ref{p:duality and base change}).

Let $R$ be a Dedekind domain.
Let $M$ be a matroid over a ring $R$, on ground set~$E$.
For any $A\subseteq E$ and $b\in E\setminus A$,
the map provided by condition~(M1) may be fitted into an exact sequence
of the shape
\begin{equation}\label{eq:duality res 1}
0\to I \to R \to M(A)\to M(Ab)\to 0
\end{equation}
where $R/I$ is chosen isomorphic to the cyclic kernel from condition~(M1),
with $I$ an ideal of $R$.

The next ingredient is a projective resolution of form
\begin{align}\label{eq:duality res 2}
\ldots\to P_2^\emptyset\to P_1^\emptyset\to P_0^\emptyset\to M(\emptyset)\to 0,
\end{align}
where $P_0^\emptyset$ and $P_1^\emptyset$ are projective; this can
be attained from a projective resolution of $M$.  
Because $R$ is Dedekind, there is a
f.g.\ projective resolution of $M(\emptyset)$ of length at most~1,
so $P_2^\emptyset$ and the terms left of it are zero; 
fix one of these resolutions.  

From any maximal flag of subsets
$\emptyset=A_0\subsetneq A_1\subsetneq\cdots\subsetneq A_{|A|}=A$
we obtain a composite map
\[P_0^\emptyset\to M(\emptyset)\to M(A_1)\to\cdots\to M(A).\]
The kernel of this composition $P_0^\emptyset\to M(A)$ has a filtration whose subquotients
are the kernels of the individual arrows in it.  
The exact sequences above continue to free resolutions for these kernels,
allowing us to resolve $\ker P_0^\emptyset\to M(A)$ with a correspondingly filtered
resolution by the Horseshoe Lemma:
\[P(A)_\bullet :\quad \ldots\to P_2\to P_1^\emptyset\oplus R^{|A|}\stackrel{d_1}
\to P_0^\emptyset\to M(A)\to 0\]
The subquotient complexes appearing in the filtration
of the map $P_2\to P_1^\emptyset\oplus R^{|A|}$ in this complex
are one copy of $P_2^\emptyset\to P_1^\emptyset$, from \eqref{eq:duality res 2},
and $|A|$ copies of complexes $I\to R$, from \eqref{eq:duality res 1}.


For convenience we will give the modules in $P(A)_\bullet$ simpler names:
\[P(A)_\bullet :\quad\cdots\to P_2(A)\to P_1(A)\stackrel{d_1}
\to P_0(A)\to M(A)\to 0.\]
As usual, we write $^\vee$ for the contravariant functor $\Hom(\mbox{---},R)$.
\begin{definition}\label{def:dual}
Define the module $M^*(E\setminus A)$ as the cokernel of the map
dual to $d_1$ in $P(A)_\bullet$, that is
\[M^*(E\setminus A) \doteq \coker\big(P_0(A)^\vee\stackrel{d_1^\vee}
\longrightarrow  P_1(A)^\vee\big).\]
We define $M^*$, the \emph{dual} matroid over~$R$ to~$M$, to be the collection of
these modules $M^*(E\setminus A)$.
\end{definition}

\begin{lemma}\label{p:dual well-defined}
If $R$ is a Dedekind domain, then
the module $M^*(E\setminus A)$ is well-defined.
\end{lemma}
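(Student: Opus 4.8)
The claim is that $M^*(E\setminus A)$ does not depend on the choices made in its construction, namely: (i) the choice of projective resolution \eqref{eq:duality res 2} of $M(\emptyset)$ (of length $\leq 1$ in the Dedekind case, or the minimal free resolution in the local case); (ii) the choice of the maximal flag $\emptyset = A_0 \subsetneq A_1 \subsetneq \cdots \subsetneq A_{|A|} = A$; and (iii) the choice of cyclic presentations \eqref{eq:duality res 1} of the kernels along that flag, i.e.\ the choice of element $x(b)$ from axiom (M1) at each step. What we must show is that the cokernel of $d_1^\vee$ is independent of all of these. My plan is to reduce everything to a single statement: the truncated complex $P_0(A) \xrightarrow{d_1} P_0(A) \leftarrow$ wait—rather, the two-term complex $[P_1(A) \xrightarrow{d_1} P_0(A)]$ is, up to chain homotopy equivalence and the addition of trivial (i.e.\ split-exact $R \xrightarrow{\id} R$) summands, an invariant of $M(A)$ together with the data that pins it down homologically; dualizing a chain homotopy equivalence gives a chain homotopy equivalence, and adding a split summand $R \xrightarrow{\id} R$ does not change the cokernel of the dualized differential. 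So the real content is to identify exactly what invariant of $M(A)$ the complex $[P_1(A) \to P_0(A)]$ presents.

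**Key steps, in order.** First, I would recall that by construction $P(A)_\bullet$ is a projective (resp.\ free) resolution of $M(A)$: the Horseshoe Lemma guarantees exactness, and the terms $P_0(A), P_1(A)$ are projective. In the Dedekind case $P_2(A)$ and higher terms vanish because all the subquotients of the filtration — one copy of the length-$\leq 1$ resolution \eqref{eq:duality res 2} and $|A|$ copies of $I \to R$ — have projective dimension $\leq 1$; so $P(A)_\bullet$ is a length-$\leq 1$ projective resolution. In the local case, I would observe that the filtered resolution we built may fail to be minimal, but it differs from the minimal free resolution of $M(A)$ only by split-exact summands $0 \to R \xrightarrow{\id} R \to 0$ (this is the standard fact that any free resolution of a module over a local ring is the minimal one direct-sum some trivial complexes). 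Second: any two projective (resp.\ free) resolutions of the same module are chain homotopy equivalent — in the local case, after stripping trivial summands, actually isomorphic. Third: apply $^\vee = \Hom(\text{---}, R)$ termwise. A chain homotopy equivalence $f : P(A)_\bullet \to Q(A)_\bullet$ dualizes to a chain homotopy equivalence $f^\vee$ between the dual cochain complexes, hence induces isomorphisms on cohomology; and $\coker(d_1^\vee) = H^1$ of the dual complex, since $P_0(A)^\vee \to P_1(A)^\vee \to P_2(A)^\vee$ has the relevant cohomology there. (In the Dedekind case $P_2(A) = 0$ so $\coker d_1^\vee = \Ext^1_R(M(A), R)$ on the nose, which makes the well-definedness transparent — it is literally an $\Ext$ group — modulo the one subtlety below.) Adding or removing a split summand $R \xrightarrow{\id} R$ to the complex adds or removes a summand $R \xrightarrow{\id} R$ to the dual, which contributes nothing to the cokernel of the differential; this handles the local non-minimality. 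Fourth, and most delicately: I need $\coker(d_1^\vee)$ to see only $M(A)$ and not, say, a spurious dependence on where in the resolution we ``cut.'' In the Dedekind case this is automatic since the whole resolution has length $\leq 1$; in the local case I must check that $M^*(E \setminus A)$ — defined as $\coker(P_0(A)^\vee \to P_1(A)^\vee)$ rather than as $\Ext^1$ — is unaffected by the choice of $P_2(A)$ and beyond, which follows because changing the tail of a free resolution changes $P_1(A)$ only by split summands of the form handled above.

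**The main obstacle.** The genuinely delicate point is the interaction between (ii) the flag and (iii) the element choices: a priori the complex $P(A)_\bullet$ is assembled from a \emph{specific} flag, and it is not obvious the resulting resolution ``is'' a resolution of $M(A)$ in a way insensitive to that flag and to the $x(b)$'s. The clean way through is to not track the flag at all in the end: once one knows $P(A)_\bullet$ is \emph{some} projective (resp.\ free) resolution of $M(A)$ — which the Horseshoe construction delivers regardless of which flag and which cyclic presentations were used — the uniqueness-up-to-homotopy of projective resolutions absorbs all of (i), (ii), (iii) at once. So the proof structure is: (1) verify $P(A)_\bullet$ resolves $M(A)$; (2) invoke uniqueness of projective resolutions up to chain homotopy equivalence (plus, in the local case, the split-summand normalization); (3) dualize and read off the cokernel as a homotopy-invariant (in the Dedekind case, literally $\Ext^1_R(M(A),R)$). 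The one spot demanding care beyond routine homological algebra is confirming the local-ring normalization — that our Horseshoe-built free resolution is the minimal one plus trivial complexes — so that the cokernel of the dualized $d_1$ genuinely coincides for all choices; this rests on Nakayama exactly as flagged in Remark~\ref{rem:graded}.
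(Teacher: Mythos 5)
Your proof has a genuine gap, and the most visible symptom is a false claim: you assert that ``in the Dedekind case $P_2(A)$ and higher terms vanish,'' and consequently that $\coker(d_1^\vee)$ is literally $\Ext^1_R(M(A),R)$. This is not what the Horseshoe construction produces. By the paper's own description, the filtration of $P_2(A)\to P_1(A)$ has subquotients one copy of $P_2^\emptyset \to P_1^\emptyset$ and $|A|$ copies of $I\to R$. In the Dedekind case $P_2^\emptyset=0$, but the $|A|$ ideals $I_1,\dots,I_{|A|}$ (arising from the torsion kernels in the flag) are nonzero whenever the corresponding step has torsion kernel, so $P_2(A)\cong I_1\oplus\cdots\oplus I_{|A|}$ is a nonzero projective module in general. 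Correspondingly, by Lemma~\ref{lem:dual is Ext}, $M^*(E\setminus A)\cong \Ext^1(M(A),R)\oplus\Hom(P_2(A),R)$, with the second summand genuinely present. Well-definedness therefore hinges on proving that the isomorphism class of $P_2(A)$ is independent of the flag and of the elements $x(b)$; this is precisely what the paper's proof does, via Lemma~\ref{lem:Dedekind kernels} (for a fixed flag) and an explicit flag-swap argument using a pushout square. Your proof omits this entirely.

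The deeper structural issue is that your homotopy-theoretic reduction does not actually bypass this work. Two full projective resolutions of $M(A)$ are homotopy equivalent, but the cokernel of $d_1^\vee$ is not a homotopy invariant of the full complex (it is $H^1$ only of the two-term truncation), and homotopy equivalences of full resolutions do not restrict to homotopy equivalences of truncations. Equivalently: adding a split summand $R\xrightarrow{\id}R$ in degrees $(1,0)$ leaves $\coker(d_1^\vee)$ unchanged, but adding one in degrees $(2,1)$ --- which is perfectly compatible with ``changing the tail'' and keeping $P_0$ fixed --- adds an $R^\vee$ to the cokernel. So your blanket claim that trivial summands are harmless is incorrect. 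What actually makes the construction well-defined is that both $P_0(A)=P_0^\emptyset$ and the rank of $P_1(A)=P_1^\emptyset\oplus R^{|A|}$ are forced by the construction, which eliminates the freedom to insert degree-$(2,1)$ trivial summands; in the local case the paper exploits exactly this via Nakayama (splitting $P_1(A)=F_1\oplus G_1$ with both factors determined), and in the Dedekind case via the explicit identification of $P_2(A)$. Your outline gestures at the right homological tools but omits the step that carries the actual content of the lemma.
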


\begin{lemma}\label{lem:dual is Ext}
Let $R$ be a Dedekind domain.
For any exact sequence
\[0\to K_2\to Q_1\to Q_0\to N\to 0\]
of $R$-modules with $Q_1$ and $Q_0$ projective, 
the cokernel of the induced map $Q_0^\vee\to Q_1^\vee$
is isomorphic to $\Ext^1(N,R)\oplus\Hom(K_2,R)$.
\end{lemma}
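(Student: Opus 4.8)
The plan is to split the given four-term exact sequence into two short exact sequences and use the fact that over a Dedekind domain every submodule of a projective module is projective, so that the relevant modules are well-behaved under $\Hom(\mbox{---},R)$. Write $K_1 = \im(Q_1\to Q_0) = \ker(Q_0\to N)$, so that we have short exact sequences $0\to K_1\to Q_0\to N\to 0$ and $0\to K_2\to Q_1\to K_1\to 0$. Since $R$ is Dedekind, $K_1\subseteq Q_0$ is projective (submodules of projectives over a Dedekind domain are projective), so the second sequence splits: $Q_1\cong K_2\oplus K_1$, and the map $Q_1\to K_1$ is, up to this isomorphism, the projection. First I would use this splitting to compute the cokernel in question directly: the map $Q_0^\vee\to Q_1^\vee = K_2^\vee\oplus K_1^\vee$ has image contained in the $K_1^\vee$ summand (it is the composite $Q_0^\vee\to K_1^\vee\hookrightarrow K_1^\vee\oplus K_2^\vee$), so its cokernel is $\coker(Q_0^\vee\to K_1^\vee)\oplus K_2^\vee = \coker(Q_0^\vee\to K_1^\vee)\oplus\Hom(K_2,R)$, the second summand as desired.

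It remains to identify $\coker(Q_0^\vee\to K_1^\vee)$ with $\Ext^1(N,R)$. For this I would apply the contravariant functor $\Hom(\mbox{---},R)$ to the short exact sequence $0\to K_1\to Q_0\to N\to 0$, obtaining the long exact sequence
\[0\to N^\vee\to Q_0^\vee\to K_1^\vee\to \Ext^1(N,R)\to \Ext^1(Q_0,R)=0,\]
where the last term vanishes because $Q_0$ is projective. Exactness at $K_1^\vee$ then gives $\Ext^1(N,R)\cong\coker(Q_0^\vee\to K_1^\vee)$, completing the identification and hence the proof.

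The only real subtlety — and the step I would be most careful about — is the claim that $K_1$ is projective, which is where the Dedekind hypothesis is genuinely used (it fails in higher global dimension, consistent with Example~\ref{ex:2D not combinatorial}); this is exactly the statement that Dedekind domains have global dimension~$1$, so $\ker(Q_0\twoheadrightarrow N)$ is projective, equivalently the syzygies stabilize after one step. Everything else is a routine diagram chase with the two short exact sequences, together with the standard vanishing $\Ext^1(\text{projective},R)=0$. One should also note that $\Hom(K_2,R)$ appears rather than a derived term precisely because the splitting $Q_1\cong K_1\oplus K_2$ makes the contribution of $K_2$ to the cokernel literally the summand $K_2^\vee$, with no extension data surviving.
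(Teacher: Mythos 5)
Your proof is correct and follows essentially the same route as the paper's: both split the four-term sequence at $K_1=\ker(Q_0\to N)$ and work with the resulting two short exact sequences, using the Dedekind hypothesis (global dimension $1$) in the same essential place. The one small difference is a stylistic shortcut: you invoke projectivity of $K_1$ up front to split $0\to K_2\to Q_1\to K_1\to 0$, so that $Q_1^\vee\cong K_1^\vee\oplus K_2^\vee$ and the cokernel decomposes as a direct sum on the nose, whereas the paper first observes that the cokernel of the composite $Q_0^\vee\to K_1^\vee\hookrightarrow Q_1^\vee$ is an extension of $\Hom(K_2,R)$ by $\Ext^1(N,R)$ and only then uses projectivity of $\Hom(K_2,R)$ to split the extension; these are two phrasings of the same idea, and your version arguably reads a little more cleanly.
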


\begin{proof}
Let $K_1$ be the kernel of $Q_0\to N$.
This splits the given sequence into two short exact sequences
\begin{gather*}
0\to K_2\to Q_1\to K_1\to 0\\
0\to K_1\to Q_0\to N\to 0
\end{gather*}
which yield the following long exact sequences of $\Ext(\mbox{---},R)$:
\begin{gather*}
0\to\Hom(K_1,R)\to\Hom(Q_1,R)\to\Hom(K_2,R)\to\Ext^1(K_1,R)\to0\\
0\to\Hom(N,R)\to\Hom(Q_0,R)\to\Hom(K_1,R)\to\Ext^1(N,R)\to0\to\Ext^1(K_1,R)\to0.
\end{gather*}
The last zero arises since $R$ has global dimension~1,
and it implies $\Ext^1(K_1,R)=0$.
The cokernel of the composition $\Hom(Q_0,R)\to\Hom(K_1,R)\to\Hom(Q_1,R)$
is canonically isomorphic to an extension of the cokernels of the
maps being composed, which is an extension of
$\Ext^1(N,R)$ by $\Hom(K_2,R)$.
The latter is projective, so the extension can (noncanonically)
be taken to be a direct sum.
\end{proof}

\begin{proof}[Proof of Lemma~\ref{p:dual well-defined}]
First of all, Lemma~\ref{lem:Dedekind kernels} implies that,
given a fixed maximal flag of subsets $\{A_i\}$ of~$A$, there is a unique choice
of the modules $I$ in each instance of \eqref{eq:duality res 1},
up to isomorphism.  
Therefore the isomorphism class of $P_2(A)$ 
is well-defined for each fixed flag.  

We are done so long as every maximal flag of
subsets yields the same projective module $P_2$.
One can obtain any maximal flag of subsets from any other
by successive replacements
of a segment $A_i\subsetneq A_ib\subseteq A_ibc$
with $A_i\subsetneq A_ic\subseteq A_ibc$,
so it's sufficient to show that one such replacement doesn't alter $P_2(A)$.
For any such replacement, there exists a commutative diagram as in axiom~(M).
\[\xymatrix{
M(A_i)\ar^f[r]\ar_g[d] & M(A_ib)\ar^{g'}[d] \\
M(A_ic)\ar_{f'}[r] & M(A_ibc)
},\]
Whichever of the two flags of subsets is used, these two maps correspond
to two steps like \eqref{eq:duality res 1} in the filtration of $P(A)_\bullet$.
In either case, the subquotient complex of $P(A)_\bullet$
formed from the extension formed of these two steps is
a resolution of $\ker(M(A_i)\to M(A_ibc))$ like
\[0\to K\to R^2\stackrel{d}\to
\ker\Big(M(A_i)\to M(A_ibc)\Big)\to 0\]
where the labelled map $d$ may be chosen to be $(r,s)\mapsto rx+sy$
if $x$ and~$y$ generate the kernels of $f$ and~$g$, respectively.
It follows that $K$, and therefore the module $P_2(A)$, is isomorphic in the two cases.

Finally, by Lemma~\ref{lem:dual is Ext}, since $M^*(E\setminus A)$ depends only
on the isomorphism classes of $P_2(A)$ and $M(A)$ itself,
it is well-defined.
\end{proof}

\begin{theorem}\label{p:dual}
If $M$ is a matroid over~$R$, and $M^*$ is defined,
then $M^*$ is a matroid over~$R$ as well.
\end{theorem}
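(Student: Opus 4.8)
The plan is to verify axioms (M1) and (M2) for $M^*$, exploiting the fact that Definition~\ref{def:dual} is, by Lemma~\ref{lem:dual is Ext} in the Dedekind case (and by the splitting argument in the local case), essentially an $\Ext^1(\mbox{---},R)$ computation up to a projective summand. The key structural observation is that dualizing reverses containments: a relation $M(A)\twoheadrightarrow M(Ab)$ with cyclic kernel $R/I$ should dualize to an inclusion $M^*(E\setminus Ab)\hookrightarrow M^*(E\setminus A)$ with cyclic cokernel. So the first step is to make this precise. Given $A$ and $b\in E\setminus A$, I would extend a maximal flag for $E\setminus Ab$ by one step to a maximal flag for $E\setminus A$; the two resolutions $P(E\setminus Ab)_\bullet$ and $P(E\setminus A)_\bullet$ then differ, by the Horseshoe construction, by exactly one extra copy of the two-term complex $I\to R$ from \eqref{eq:duality res 1}. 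Applying $^\vee$ and chasing the resulting long exact $\Ext$ sequence (as in the proof of Lemma~\ref{lem:dual is Ext}) should show that the map dual to the short exact sequence $0\to R/I\to M(A)\to M(Ab)\to 0$ induces an exact sequence $0\to M^*(E\setminus Ab)\to M^*(E\setminus A)\to \Ext^1(R/I,R)\to 0$ (modulo bookkeeping with the projective summands $\Hom(K_2,R)$, which behave additively). Since $R/I$ is cyclic, $\Ext^1(R/I,R)\cong R/I$ is cyclic as well, so the cokernel is cyclic — but here the roles of ``kernel'' and ``cokernel'' are swapped relative to (M1). This forces the right formulation: I must show that for $A'\subseteq E$ and $b\in A'$, there is an inclusion $M^*(A'\setminus\{b\})\hookrightarrow M^*(A')$ with cyclic cokernel, which is equivalent to (M1) for $M^*$ once one observes the variables are renamed $A'=E\setminus A$.

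For axiom (M2), the strategy is to dualize the pushout square. Starting from the pushout for $M$ at $(A,b,c)$ — four surjections with cyclic kernels — I would build a single resolution $P(E\setminus A)_\bullet$ from a maximal flag of $E\setminus A$ passing through $E\setminus Ab$ and $E\setminus Abc$, compatibly also through a flag through $E\setminus Ac$; the key point, already extracted in the proof of Lemma~\ref{p:dual well-defined} in the Dedekind case, is that the subquotient complex associated to the segment $A\subsetneq Ab\subsetneq Abc$ (or $A\subsetneq Ac\subsetneq Abc$) is a two-step filtration of the resolution $0\to K\to R^2\xrightarrow{(r,s)\mapsto rx+sy}\ker(M(A)\to M(Abc))\to 0$, symmetric in $x$ and $y$. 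Dualizing a pushout of surjections with cyclic kernels should yield a pullback of injections with cyclic cokernels; concretely, I expect to produce a commutative square
\[\xymatrix{
M^*(E\setminus Abc)\ar[r]\ar[d] & M^*(E\setminus Ac)\ar[d] \\
M^*(E\setminus Ab)\ar[r] & M^*(E\setminus A)
}\]
in which all four maps are injections with cyclic cokernel and the square is a pullback, which — after renaming $A'=E\setminus A$, $b'=b$, $c'=c$ and noting $E\setminus Ab=A'\setminus\{b'\}$ etc. — is exactly the dual form of axiom (M2) (equivalently, the ``op'' version of the pushout characterization in Section~\ref{tg}). To get the pullback property and the cyclicity of all four cokernels simultaneously, I would apply $\Hom(\mbox{---},R)$ to the common resolution and use that $\Ext^1$ of the relevant pushout-of-cyclics is again a pullback-of-cyclics; the symmetric presentation $(r,s)\mapsto rx+sy$ guarantees the needed compatibility, so that the element $x^\vee$ dual to $x$ plays the role of $x(b,c)$ in (M) for $M^*$ and $y^\vee$ the role of $y(b,c)$.

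The main obstacle will be bookkeeping the non-canonical projective direct summands. In both the local and Dedekind cases, $M^*(E\setminus A)$ is only $\Ext^1(M(A),R)$ up to a free or projective summand coming from $\Hom(K_2,R)$ (resp.\ from $G_1^\vee$), and these summands change as $A$ grows. I would need to check that along a one-element extension of the flag, the extra summand grows by a controlled, projective amount that is absorbed compatibly into the inclusion $M^*(E\setminus Ab)\hookrightarrow M^*(E\setminus A)$, so that the cokernel is genuinely just the cyclic $\Ext^1(R/I,R)$ and nothing more — and likewise that the four summands in the (M2) square are mutually compatible. A clean way to sidestep most of this is to observe that it suffices to verify the axioms after localizing at every maximal ideal (the axioms are local in this sense, since cyclicity, surjectivity and pushout/pullback all localize), reducing to the local case where projective means free and Nakayama gives a canonical minimal resolution; there the summand $G_1$ is pinned down explicitly, as in the proof of Lemma~\ref{p:dual well-defined}, and the compatibility becomes a direct diagram chase. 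I would present the argument in this localized form, then note that the reconstruction from localizations (Propositions~\ref{p:Dedekind M1} and~\ref{p:Dedekind M2}) promotes it back to the global statement.
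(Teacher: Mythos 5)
Your outline contains a directional error that is fatal to the argument as you state it. You claim (for axiom (M1)) that dualizing the short exact sequence $0\to R/I\to M(A)\to M(Ab)\to 0$ yields an exact sequence
\[
0\to M^*(E\setminus Ab)\to M^*(E\setminus A)\to\Ext^1(R/I,R)\to 0,
\]
and then try to ``reformulate'' (M1) for $M^*$ as the existence of an \emph{inclusion} with cyclic cokernel. Neither step is correct. An inclusion with cyclic cokernel is not equivalent to the surjection-with-cyclic-kernel that (M1) actually demands, and the claimed exact sequence is false. A minimal counterexample: over $R=\Z$, take $E=\{1\}$, $M(\emptyset)=\Z\oplus\Z/2$, $M(1)=\Z$. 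Direct computation from Definition~\ref{def:dual} gives $M^*(\emptyset)\cong\Z$ and $M^*(1)\cong\Z/2$ (equivalently, from Lemma~\ref{lem:dual is Ext} and Corollary~\ref{cor:dual}: $M^*(1)=\tors{M(\emptyset)}=\Z/2$, and $P_2(\{1\})\cong\Z$ so $M^*(\emptyset)\cong\Z$). There is a surjection $\Z\twoheadrightarrow\Z/2$ with cyclic kernel, as (M1) requires, but there is no injection $\Z\hookrightarrow\Z/2$ at all, so your proposed exact sequence $0\to M^*(\emptyset)\to M^*(1)\to\cdots\to 0$ cannot exist. The same error then infects your treatment of (M2): you propose to show $M^*$ satisfies a \emph{pullback} of injections, i.e.\ an ``op'' version of the axiom, but the theorem asserts that $M^*$ is a matroid over $R$, which means it must satisfy the \emph{same} pushout axiom (M2) that $M$ does, not its categorical opposite.

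The source of the confusion is that there are two reversals, not one, and they cancel. Dualization $\Hom(\mbox{---},R)$ reverses arrows, but $M^*$ is also indexed by \emph{complements}: $M^*(E\setminus A)$ is built from $P(A)_\bullet$. As $A$ grows to $Ab$, the complex $P(Ab)_\bullet$ \emph{gains} an $R$-summand in degree one relative to $P(A)_\bullet$; after applying $^\vee$ the induced map on cokernels therefore goes $M^*(E\setminus Ab)\twoheadrightarrow M^*(E\setminus A)$, a surjection with cyclic kernel — exactly (M1), with the smaller set $E\setminus Ab$ as source and the larger set $E\setminus A$ as target. (This is the paper's argument, via the dualized two-row diagram of projectives.) For (M2), the paper likewise builds a compatible splitting $P_1(Abc)\cong P_1(A)\oplus R^2$ using a lift in $P_0^\emptyset$ that works simultaneously for $b$ and $c$, dualizes, and shows the resulting square of $M^*$-modules is again a pushout of surjections — again, the double reversal leaves the pushout shape intact. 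Your ``localize to reduce to the DVR case'' remark would not rescue the argument: the counterexample above is already local at $(2)$, and the intended global-from-local step also relies on Proposition~\ref{p:Dedekind M1}/\ref{p:Dedekind M2}, which govern precisely the surjections/pushouts of (M1)/(M2), not injections/pullbacks. If you redo the argument with the maps oriented correctly, the $\Ext$-theoretic perspective you favor (Lemma~\ref{lem:dual is Ext}) is indeed a reasonable route for (M1); for (M2), however, you still need the explicit compatible splitting of $P_1(Abc)$ to get a genuine pushout rather than just ranks and isomorphism types.
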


\begin{proof} 
Let $A\subseteq E$ and $b\in E\setminus A$.
In the construction of $P(Ab)_\bullet$, choose
a maximal flag of subsets ending in $\cdots\subseteq A\subseteq Ab$.
The construction then provides an exact sequence of complexes
which, at the $P_0$ and~$P_1$ terms, looks like
\[\xymatrix{
0\ar[r] & P_1(A)\ar[r]\ar[d] & P_1(Ab)\ar[r]\ar[d] & R\ar[r]\ar[d] & 0 \\
0\ar[r] & P_0(A)\ar[r] & P_0(Ab)\ar[r] & 0 &
}.\]
All these modules are projective, so dualizing all the maps preserves exactness:
we have
\begin{equation}\label{eq:dual maps}
\xymatrix{
0 & P_1(A)^\vee\ar[l] & P_1(Ab)^\vee\ar[l] & R^\vee\ar[l] & 0\ar[l] \\
0 & P_0(A)^\vee\ar[l]\ar[u] & P_0(Ab)^\vee\ar[l]\ar[u] & 0\ar[l]\ar[u] &
}
\end{equation}
This induces a map between the cokernels of the left two upward arrows,
which is still surjective, and has kernel some quotient of $R$.
That is, we have a surjection
$M^*(E\setminus A)\gets M^*(E\setminus(Ab))$ whose kernel
is a cyclic module.  These maps are exactly what is needed
to establish condition~(M1) for~$M^*$.

Now let $b,c\in E\setminus A$.
Building off the maps in the pushout diagram assured by axiom~(M) for~$M$,
we get a commuting square of the maps among the modules $P_1$
constructed above.
\[\xymatrix{
P_1(A)\ar[r]\ar[d] & P_1(Ab)\ar[d] \\
P_1(Ac)\ar[r] & P_1(Abc) \\
}\]
Each of these inclusions has cokernel $R$, and so the target splits
as a direct sum.  Regard the various complexes $P(\cdot)_\bullet$
as resolutions of kernels $\ker(P_0^\emptyset\to M(\cdot))$.
Then, taking for example the top map,
$P_1(A)\to P_1(Ab)$, we can identify
$P_1(Ab)$ with $P_1(A)\oplus R$, where
$P_1(A)$ maps to $\ker(P_0^\emptyset\to M(Ab))$
via its map to $\ker(P_0^\emptyset\to M(A))$, and
$R$ maps to $\ker(P_0^\emptyset\to M(Ab))$ by sending
1 to a lift of a generator of $\ker(M(A)\to M(Ab))$.

Now, this lift of a generator of $\ker(M(A)\to M(Ab))$
to $P_0$ is also a lift of a generator of $\ker(M(Ac)\to M(Abc))$.
The same is true with the roles of $b$ and $c$ reversed.  So in fact
the whole square of maps can be split compatibly, as
\[\xymatrix{
P_1(A)\ar[r]\ar[d] & P_1(A)\oplus R\ar[d]^{(x,r)\mapsto (x,r,0)} \\
P_1(A)\oplus R\ar[r]_{\strut (x,r)\mapsto (x,0,r)} & P_1(A)\oplus R^2 \\
}.\]
Dualizing this square yields the square
\begin{equation}\label{eq:dual proof 3}\xymatrix{
P_1(A)^\vee & P_1(A)^\vee\oplus R^\vee\ar[l] \\
P_1(A)^\vee\oplus R^\vee\ar[u] & P_1(A)\oplus (R^\vee)^2\ar[l]\ar[u] \\
}\end{equation}
in which all the maps are projections onto summands, which is a pushout.

Finally, the square with which we are ultimately concerned
\begin{equation}\label{eq:dual proof 4}\xymatrix{
M^*(E\setminus A) & M^*(E\setminus(Ab))\ar[l] \\
M^*(E\setminus(Ac))\ar[u] & M^*(E\setminus(Abc))\ar[l]\ar[u] \\
}\end{equation}
is obtained by taking the quotient of each of the modules in square~\eqref{eq:dual proof 3}
by the image of the corresponding module $P_0(\cdot)^{\vee}$.  In fact all the
$P_0(\cdot)^{\vee}$ are isomorphic to $(P_0^\emptyset)^\vee$, compatibly.
This remains a pushout by the universal property, as follows.
Commuting maps from $M^*(E\setminus(Ab))$ and
$M^*(E\setminus(Ac))$ to a module $N$ lift to commuting maps to~$N$
from the upper-right and lower-left instances
of $P_1(A)^\vee\oplus R^\vee$ in \eqref{eq:dual proof 3}, whose kernels contain $(P_0^\emptyset)^\vee$.
Since that square is a pushout, a map $P_1(A)^\vee\to N$ can be provided.
The kernel of this map contains $(P_0^\emptyset)^\vee$ and so it descends to a map
$M^*(E\setminus A)\to N$.  Uniqueness can be argued similarly.
We have thus established axiom~(M) for $M^*$.
\end{proof}

We now state a fact whose substantive content is 
Lemma~\ref{lem:dual is Ext}.
We have postponed it to here only so that ``matroid over~$R$''
could appear in the statement.

\begin{corollary}\label{cor:dual essential}
Let $R$ be a Dedekind domain. Then $M^*$ is an essential matroid over~$R$. 

Suppose a system of modules $\{M'(A) : A\subseteq E\}$ is constructed as in
Definition~\ref{def:dual} except that we allow sequence \eqref{eq:duality res 2}
to be an arbitrary projective resolution.  Then $M'(A)$ is the direct sum
of $M^*(A)$ and a projective empty matroid.  
\end{corollary}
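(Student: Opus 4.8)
The plan is to verify the two assertions separately, in each case reducing to a direct inspection of the construction of Definition~\ref{def:dual} and splitting, as elsewhere in this section, into the Dedekind and the local case.

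For the first assertion, observe that essentiality is a condition on $M^*(E) = M^*(E\setminus\emptyset)$, and that $P(\emptyset)_\bullet$ involves no copy of the sequence \eqref{eq:duality res 1}, so it is just the chosen resolution \eqref{eq:duality res 2} of $M(\emptyset)$ and $M^*(E) = \coker(d_1^\vee)$ for its first differential. In the Dedekind case, Lemma~\ref{lem:dual is Ext} with $K_2 = 0$ identifies this cokernel with $\Ext^1(M(\emptyset),R)$; writing $M(\emptyset)\cong\tors{M(\emptyset)}\oplus\projpart{M(\emptyset)}$ and using that $\Ext^1(\mbox{---},R)$ annihilates the projective summand shows $\Ext^1(M(\emptyset),R)$ is torsion, and a finitely generated torsion module over a domain has no nonzero projective (hence torsion-free) summand. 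In the local case \eqref{eq:duality res 2} is a minimal free resolution, so $\im d_1\subseteq\mf m F_0$ and dually $\im d_1^\vee\subseteq\mf m F_1^\vee$; a nonzero free summand of $\coker d_1^\vee$ would give a surjection $F_1^\vee\twoheadrightarrow R$ killing $\im d_1^\vee$, which under $(F_1^\vee)^\vee = F_1$ corresponds to an element $v\in F_1$ lying outside $\mf m F_1$ (surjectivity) yet inside $\ker d_1 = \im d_2\subseteq\mf m F_1$ (minimality) --- a contradiction.

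For the second assertion, let $M'$ be built from an arbitrary projective resolution $Q_\bullet$ of $M(\emptyset)$, so that the Horseshoe construction produces $Q(A)_\bullet$ from $Q_\bullet$ together with the same $|A|$ copies of \eqref{eq:duality res 1} used for $P(A)_\bullet$. In the local case I would use that every free resolution of $M(\emptyset)$ is isomorphic, as a complex, to the minimal one together with a contractible direct summand $C_\bullet\cong\bigoplus[\,R\xrightarrow{\id}R\,]$, each summand concentrated in two consecutive degrees; since the Horseshoe construction is additive in the resolution of the base, $Q(A)_\bullet\cong P(A)_\bullet\oplus C_\bullet$, and dualizing and taking the cokernel of the degree-$1$ differential leaves $M^*(E\setminus A)$ together with one free $R$ for each summand of $C_\bullet$ placed in degrees $(2,1)$. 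In the Dedekind case I would instead apply Lemma~\ref{lem:dual is Ext} twice, obtaining $M'(E\setminus A)\cong\Ext^1(M(A),R)\oplus\Hom(K_2'(A),R)$ and $M^*(E\setminus A)\cong\Ext^1(M(A),R)\oplus\Hom(K_2(A),R)$, where $K_2'(A)$ and $K_2(A)$ are the images of the second differentials of $Q(A)_\bullet$ and $P(A)_\bullet$; both are submodules of projective modules over a Dedekind domain, hence projective, and reading off the Horseshoe filtrations gives $[K_2'(A)] = [K_2(A)] + [\ker(Q_1\to Q_0)]$ in $K_0(R)\cong\Z\oplus\Pic(R)$. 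Since a projective module over a Dedekind domain is pinned down up to isomorphism by its class there (Propositions~\ref{p:Dedekind structure} and~\ref{p:Dedekind K}), this forces $K_2'(A)\cong K_2(A)\oplus\ker(Q_1\to Q_0)$, hence $M'(E\setminus A)\cong M^*(E\setminus A)\oplus\Hom(\ker(Q_1\to Q_0),R)$. In either case the extra summand is a fixed projective module independent of $A$, so $M' = M^*\oplus(\text{projective empty matroid})$.

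The step I expect to be the main obstacle is precisely checking that the correction term does not depend on $A$: in the Dedekind case this rests on careful bookkeeping of the Horseshoe filtration of the second syzygy together with the rank-and-determinant classification of projectives, and in both cases one must also dispose of the degenerate situations (such as $\ker(Q_1\to Q_0) = 0$, or $F_1 = 0$) in which the correction term is zero and $M' = M^*$ outright.
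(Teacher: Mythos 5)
Your proposal is correct and follows essentially the same route as the paper: essentiality via $\Ext^1(M(\emptyset),R)$ being torsion in the Dedekind case and via minimality of the resolution in the local case, and the second assertion via comparing second syzygies (Lemma~\ref{lem:dual is Ext}) respectively via splitting the arbitrary resolution as the minimal one plus a contractible summand. If anything, your Dedekind argument for the second assertion is more explicit than the paper's, which simply asserts that changing the resolution ``adds a $K_2$ summand'' where you carry out the Horseshoe filtration and $K_0$-class bookkeeping needed to identify $K_2'(A)\cong K_2(A)\oplus\ker(Q_1\to Q_0)$.
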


\begin{proof}
The module $P_2(\emptyset)$ is trivial, and therefore
$M^*(E)\cong \tors{M(\emptyset)}$ by Lemma~\ref{lem:dual is Ext}.

If now we substitute for \eqref{eq:duality res 2} 
a different projective resolution, say with second syzygy module $K_2$,
the effect on the sequence $P(A)_\bullet$ is to add
a $K_2$ summand to the cokernel of the dual of
the differential $d_1$.  So by Lemma~\ref{lem:dual is Ext} again,
$M'(A)$ will differ from
\[M^*(E\setminus A) \cong \Ext^1(M(A),R)\oplus\Hom(P_2(A),R)
\cong \tors{M(A)}\oplus P_2(A)^{\vee}\]
only up to the projective summand $\Hom(K_2,R)$.
\end{proof}

Our notion of duality reduces to the classical Gale duality
of vector configurations in the realizable case.  It was prefigured by the construction of
the dual of a realizable arithmetic matroid in~\cite{D'Adderio-Moci},
in which the matrix transpose operation used to construct the Gale dual
corresponds to dualizing our differential~$d_1$.

\begin{proposition}\label{p:dual realizable}
If a matroid $M$ is realizable
and its dual $M^*$ is defined, then $M^*$ is realizable too.
\end{proposition}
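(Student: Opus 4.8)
The plan is to run the construction of Definition~\ref{def:dual} starting from a realization of $M$ and to check that every module appearing is computed by an \emph{explicit} map of projective (in fact, for the realization we choose, free) modules, so that the dual modules $M^*(E\setminus A)$ assemble into the matroid of a vector configuration in the single ambient module $P_1(\emptyset)^\vee$. Concretely, suppose $M=M_X$ is realized by $X=\{x_a : a\in E\}$ inside a f.g.\ module $N$, so $M(A)=N/(Rx_a : a\in A)$. Choose a projective (free, in the local case) resolution $P_\bullet^\emptyset\to N\to 0$ as in \eqref{eq:duality res 2}, of length $\le 1$ if $R$ is Dedekind. Lift each generator $x_a\in N$ to an element $\tilde x_a\in P_0^\emptyset$. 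Then for \emph{every} $A$ the same lifts give a single resolution
\[
P(A)_\bullet:\quad P_2^\emptyset\to P_1^\emptyset\oplus R^{|A|}\xrightarrow{\,d_1\,}P_0^\emptyset\to M(A)\to 0,
\]
where $d_1$ restricts to the original differential on $P_1^\emptyset$ and sends the basis vector of the $a$-th copy of $R$ to $\tilde x_a$; this is exactly the resolution produced by the Horseshoe Lemma in the main text, but now with all choices made compatibly and independently of $A$. The key point is that $P_1(A)=P_1^\emptyset\oplus R^E$ after enlarging the index set — more precisely $P_1(A)^\vee$ is a submodule-with-quotient of the fixed module $P_1^\emptyset{}^\vee\oplus (R^\vee)^E$ — and that the inclusion $P_1(A)\hookrightarrow P_1(A')$ for $A\subseteq A'$ dualizes to a projection, so that the dual complexes fit together over all $A$.

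First I would set up this uniform $P(A)_\bullet$ and record that $d_1^\vee:P_0^\emptyset{}^\vee\to P_1^\emptyset{}^\vee\oplus (R^\vee)^E$ has, as its $a$-th coordinate in the $(R^\vee)^E$ factor, the functional $\phi\mapsto \phi(\tilde x_a)$. Next, let $N^\vee_{\mathrm{dual}}:=\coker\!\big(P_0^\emptyset{}^\vee\xrightarrow{(d_1^\emptyset)^\vee} P_1^\emptyset{}^\vee\big)$ be the dual module attached to $A=\emptyset$, i.e.\ $M^*(E)$, and let $y_a\in \coker\!\big(P_0^\emptyset{}^\vee\to P_1^\emptyset{}^\vee\oplus (R^\vee)^E\big)$ be the image of the $a$-th unit of $(R^\vee)^E$. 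I would then show that $M^*(E\setminus A)$, which by definition is $\coker\big(P_0(A)^\vee\xrightarrow{d_1^\vee}P_1(A)^\vee\big)$, is canonically $\big(M^*(E)\oplus (R^\vee)^{E\setminus A}\big)\big/\big(\text{images of the relations }\tilde x_a,\ a\in A\big)$ — that is, the quotient of a fixed ambient module by the submodule generated by the classes of the $y_a$ with $a\in A$. Unwinding: $M^*(E\setminus A)\cong P\big/\sum_{a\in A} R\,y_a$ for a fixed module $P$ and fixed elements $y_a\in P$ (one for each $a\in E$, those with $a\notin A$ contributing the extra $R^\vee$-summands via the identification $M^*(E)\oplus(R^\vee)^{E\setminus A}$, those with $a\in A$ being quotiented out). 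This is precisely equation~\eqref{eq:matroid from vectors} for the vector configuration $\{y_a\}$ inside $P$, so $M^*=M_{\{y_a\}}$ is realizable.

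There is one genuine subtlety, and that is where I expect the main obstacle to lie: in the Dedekind case the resolution \eqref{eq:duality res 2} can be taken of length $\le 1$, in which case $P_2^\emptyset=0$ and the bookkeeping above is clean; but if one uses a longer projective resolution the dual acquires a spurious projective empty-matroid summand, as Corollary~\ref{cor:dual essential} records. I would therefore either (i) in the Dedekind case, simply use a length-one resolution so that $M^*(A)\cong\tors M(A)$ is literally the configuration $\{y_a\}$ with no correction, or (ii) invoke Corollary~\ref{cor:dual essential} to absorb the projective summand $\Hom(K_2,R)$ (independent of $A$) into the ambient module $P$, which is harmless since adding a constant direct summand $Q$ to every $M^*(A)$ just replaces the configuration $\{y_a\}\subset P$ by $\{(y_a,0)\}\subset P\oplus Q$. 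In the local case the analogous point is that the minimal free resolution is a summand of any free resolution, so the same argument with $P_\bullet^\emptyset$ minimal gives a configuration realizing $M^*$ directly, with any non-minimal choice differing by a fixed free summand absorbed the same way. The only thing left to verify is naturality of all the identifications in $A$ — i.e.\ that the quotient maps $M^*(E\setminus A)\leftarrow M^*(E\setminus Ab)$ from the proof of Theorem~\ref{p:dual} agree, under our identifications, with the canonical quotient maps $P/\!\sum_{a\in A}Ry_a \leftarrow P/\!\sum_{a\in Ab}Ry_a$ — and this is immediate from the description of $d_1$ and the compatibility of the inclusions $P_1(A)\hookrightarrow P_1(Ab)$ with their duals.
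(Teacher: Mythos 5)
Your overall strategy matches the paper's: fix lifts $\tilde x_a$ once and for all so that a single differential $d_1^E:P_1^\emptyset\oplus R^E\to P_0^\emptyset$ encodes every $P(A)_\bullet$ simultaneously, then dualize. But the ``unwinding'' step has its indices reversed, and as written it does not describe a realization.

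You claim $M^*(E\setminus A)\cong P/\<y_a : a\in A\>$ for a fixed module $P$. Setting $B=E\setminus A$ this reads $M^*(B)\cong P/\<y_a : a\in E\setminus B\>$; but a realization requires $M^*(B)$ to be the quotient by the vectors indexed by $B$, not by $E\setminus B$, so this is not a realization. Relatedly, the intermediate description $\big(M^*(E)\oplus(R^\vee)^{E\setminus A}\big)/(\cdots)$ is not the quotient of a fixed module (the summand $(R^\vee)^{E\setminus A}$ depends on $A$), and in any case the containment goes the other way: $M^*(E\setminus A)$ is an extension of $M^*(E)$ by the image of $(R^\vee)^A$ (it \emph{surjects onto} $M^*(E)$), not a quotient of something built on $M^*(E)$. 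The arrow in your final naturality check, $P/\<y_a:a\in A\>\leftarrow P/\<y_a:a\in Ab\>$, also points the wrong way, since $\<y_a:a\in A\>\subseteq\<y_a:a\in Ab\>$ makes the natural surjection go from the left to the right.

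The correction is to anchor on $A=E$ rather than $A=\emptyset$: the ambient module is $M^*(\emptyset)=\coker\big((d_1^E)^\vee\big)$. Since $d_1^A$ is the restriction of $d_1^E$ to $P_1^\emptyset\oplus R^A$, dualizing gives $(d_1^A)^\vee=q_A\circ(d_1^E)^\vee$ where $q_A$ is the projection killing $e^a$ for $a\in E\setminus A$; hence $M^*(E\setminus A)\cong M^*(\emptyset)/\<y_a : a\in E\setminus A\>$, i.e.\ $M^*(B)\cong M^*(\emptyset)/\<y_a : a\in B\>$ for every $B$, which is precisely the realization by the vectors $y_a$. Your third paragraph is also unnecessary: Definition~\ref{def:dual} already fixes the length-$\le1$ (Dedekind) or minimal (local) resolution, so no spurious projective summand appears and there is nothing to absorb.
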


\begin{proof}
Let $M$ be realized by the vector configuration $(x_a:a\in E)$
within $M(\emptyset)$.  Fix lifts of these vectors to
vectors $(\tilde x_a:a\in E)$ within $P^\emptyset_0$.

For a set $A\subseteq E$ and an element $a\in A$, 
the map $d_1^A:P_1^\emptyset\oplus R^A\to P_0^\emptyset$
appearing in the resolution $P(A)_\bullet$
satisfies $d_1(0,e_a) = \tilde x_a$.  Thus each of these
maps is a restriction of the map $d_1^E:P_1^\emptyset\oplus R^E\to P_0^\emptyset$
in the complex $P(E)_\bullet$ to the submodule
$P_1^\emptyset\oplus R^A\subseteq P_1^\emptyset\oplus R^E$.
Let us dualize, and write $\{e^a : a\in E\}$ for the dual standard basis of $(R^E)^\vee$.
The map $(d_1^A)^\vee$ factors as $q_A\circ(d_1^E)^\vee$,
where $q_A:(P_1^\emptyset\oplus R^E)^\vee\to(P_1^\emptyset\oplus R^A)^\vee$
is the quotient map by the submodule $\<e^a : A\in E\setminus A\>$.  
Hence, $M^*(E\setminus A) = \coker((d_1^A)^\vee)$ is the quotient
of $M^*(\emptyset) = \coker((d_1^E)^\vee)$ 
by the submodule generated by the images of the $e^a$ for $a\in E\setminus A$.
But we have now exactly described a vector configuration realizing $M^*$:
the ambient module is $M^*(\emptyset) = \coker((d_1^E)^\vee)$,
and the vector labelled by $a$ is the image of $e^a$.  
\end{proof}

Matroid duality over~$R$ has the properties expected of it.

\begin{proposition}\label{p:duality2}
If $M$ and $M'$ are matroids over a ring $R$ where matroid duality
is defined, then
\begin{enumerate}\renewcommand{\labelenumi}{(\alph{enumi})}
\item $(M\setminus i)^*=M^*/i$.
\item $(M/i)^*$ is isomorphic to $M^*\setminus i$ plus a projective empty matroid.
If $M(\{i\})$ is a quotient of $M(\emptyset)$ by $R$, then $(M/i)^*=M^*\setminus i$.
\item $(M\oplus M')^*=M^*\oplus M'^*$.  
\end{enumerate}
\end{proposition}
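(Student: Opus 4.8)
The plan is to prove all three parts by unwinding Definition~\ref{def:dual} and matching up the complexes $P(\cdot)_\bullet$ that compute the two sides, using the freedom granted by Lemma~\ref{p:dual well-defined} to choose convenient maximal flags and projective resolutions. Parts~(a) and~(c) are essentially formal once one makes the right choices; part~(b) is the substantive case.

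For part~(a): since $(M\setminus i)(\emptyset)=M(\emptyset)$, the resolution~\eqref{eq:duality res 2} fixed for $M^*$ serves equally for $(M\setminus i)^*$. For $A\subseteq E\setminus\{i\}$ any maximal flag of subsets of $A$ lies in $\mc B(E\setminus\{i\})\subseteq\mc B(E)$, and the sequences~\eqref{eq:duality res 1} for $M\setminus i$ are literally those for $M$, so the complex $P(A)_\bullet$ is the same in both constructions; hence $(M\setminus i)^*((E\setminus\{i\})\setminus A)=M^*(E\setminus A)$. As $i\notin A$ we have $E\setminus A=((E\setminus\{i\})\setminus A)\cup\{i\}$, and the right-hand side is $(M^*/i)((E\setminus\{i\})\setminus A)$ by definition of contraction; letting $A$ vary gives $(M\setminus i)^*=M^*/i$. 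For part~(c): $(M\oplus M')(\emptyset)=M(\emptyset)\oplus M'(\emptyset)$, and one takes for its resolution~\eqref{eq:duality res 2} the direct sum of the fixed resolutions of $M(\emptyset)$ and $M'(\emptyset)$ (still of the required shape: length $\le 1$ over a Dedekind domain, and the minimal free resolution of a direct sum is the direct sum of minimal free resolutions over a local ring). Given $A\amalg A'$, use a maximal flag that first runs through a flag of $A$ and then a flag of $A'$; at each step adding an element of $E$ the sequence~\eqref{eq:duality res 1} for $M\oplus M'$ is that for $M$ with $\id_{M'(A')}$ adjoined, carrying the same cyclic kernel, and symmetrically for $E'$. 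By the Horseshoe lemma $P(A\amalg A')_\bullet$ is the direct sum of $P(A)_\bullet$ (for $M$) and $P(A')_\bullet$ (for $M'$); in particular $d_1$ splits as a direct sum, and dualizing and taking cokernels respects this, yielding $(M\oplus M')^*((E\amalg E')\setminus(A\amalg A'))=M^*(E\setminus A)\oplus M'^*(E'\setminus A')$, which is the claim.

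For part~(b), the obstruction is that $(M/i)(\emptyset)=M(\{i\})$ is not $M(\emptyset)$. I would proceed as follows. To compute $M^*(E\setminus A)$ for $A\ni i$, choose (Lemma~\ref{p:dual well-defined}) a maximal flag of subsets of $A$ whose first step is $\emptyset\subsetneq\{i\}$; write $0\to J\to R\to M(\emptyset)\to M(\{i\})\to 0$ for the sequence~\eqref{eq:duality res 1} of that step. Splicing the fixed resolution of $M(\emptyset)$ with this sequence by the Horseshoe lemma produces a projective resolution $\mc Q_\bullet$ of $M(\{i\})$ whose first differential is $(d_1^\emptyset,\widetilde x_i)$, with $\widetilde x_i\in P_0^\emptyset$ a lift of a generator of $\ker(M(\emptyset)\to M(\{i\}))$, and whose second syzygy acquires a summand $J$ (a free summand in the local case). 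Feeding $\mc Q_\bullet$ into Definition~\ref{def:dual} for $M/i$, using the flag of $A\setminus\{i\}$ obtained by discarding the first step — under which the remaining $(M1)$-sequences for $M/i$ are exactly steps $2,\dots,|A|$ for $M$ — reproduces precisely the complex $P(A)_\bullet$ computing $M^*(E\setminus A)$; hence the system built from $\mc Q_\bullet$ is $M^*\setminus i$. But $\mc Q_\bullet$ differs from the short (resp. minimal) resolution of $M(\{i\})$ used by the official $(M/i)^*$ by the acyclic complex $J\xrightarrow{\ \sim\ }J$ in homological degrees $2,1$ (resp. a free acyclic summand), so by Corollary~\ref{cor:dual essential} the system built from $\mc Q_\bullet$ is $(M/i)^*$ with a projective empty matroid — the empty matroid for $\Hom(J,R)$, resp. for a free module — adjoined. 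Thus $M^*\setminus i$ and $(M/i)^*$ differ by exactly this projective empty matroid. When $M(\{i\})$ is a quotient of $M(\emptyset)$ by a free rank-one module the ideal $J$ is zero, $\Hom(J,R)=0$, $\mc Q_\bullet$ is already minimal, and one gets $(M/i)^*=M^*\setminus i$ on the nose.

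I expect the main obstacle to be the reconciliation in part~(b): checking that lifting a generator of $\ker(M(\emptyset)\to M(\{i\}))$ to $P_0^\emptyset$ consumes precisely the one extra free generator that the Horseshoe lemma inserts, so that the two descriptions of $d_1$ agree up to an isomorphism of the source and the identity on the target; and then keeping straight on which side the leftover $\Hom(J,R)$-summand sits. My analysis — together with Corollary~\ref{cor:dual essential}, which forces every dual to be essential while $M^*(E\setminus\{i\})\cong\tors{M(\{i\})}\oplus\Hom(J,R)$ is generally not essential — suggests the projective empty matroid is adjoined to $M^*\setminus i$, not to $(M/i)^*$; this bookkeeping should be carried out carefully. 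The remaining verifications all mirror the manipulations already used for parts~(a) and~(c).
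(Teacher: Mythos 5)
Your overall strategy matches the paper's almost exactly: parts~(a) and~(c) by identifying the complexes $P(\cdot)_\bullet$ on both sides, and part~(b) by feeding the Horseshoe-lemma resolution $\mc Q_\bullet = P(\{i\})_\bullet$ into the construction of $(M/i)^*$ and appealing to Corollary~\ref{cor:dual essential}. Your bookkeeping about which side of $(M/i)^*$ versus $M^*\setminus i$ carries the projective summand is also the right one: $M^*\setminus i$ splits as $(M/i)^*$ plus a projective empty matroid.

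The one genuine gap is in the ``on the nose'' sentence of part~(b), and only in the local-ring case. You claim that when $J=0$ the complex $\mc Q_\bullet$ ``is already minimal''. That is false in general: minimality of $\mc Q_\bullet$ in degree zero requires the lift $\widetilde x_i$ of a generator of $\ker(M(\emptyset)\to M(\{i\}))$ to lie in $\mf m\,P_0^\emptyset$, which fails whenever $M(\{i\})$ needs fewer generators than $M(\emptyset)$. (Take $M(\emptyset)=R^2$, $x_i=(1,0)$, $M(\{i\})=R$: then $\mc Q_\bullet$ is $R\to R^2\to R\to 0$, which is not minimal.) The conclusion you want is nonetheless correct, but the reason is more delicate: when $\widetilde x_i\notin\mf m P_0^\emptyset$, the excess of $\mc Q_\bullet$ over the minimal resolution of $M(\{i\})$ is an \emph{isomorphism} $R\xrightarrow{\sim}R$ in homological degrees $1,0$, whose dualized cokernel vanishes, so the projective empty matroid adjoined is trivial anyway. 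The paper sidesteps this entirely by showing directly that $M^*\setminus i$ is essential: it uses that the kernel of $M(\emptyset)\twoheadrightarrow M(\{i\})$ being free forces $d_1(P_1^\emptyset)\cap d_1(R)=0$, which combined with minimality of $P_\bullet^\emptyset$ precludes any nonzero free summand of $P_1(\{i\})$ inside $\ker d_1$, hence any nonzero projective summand of $M^*(E\setminus i)$. Your Dedekind-case reasoning (that $J=0$ gives $\Hom(J,R)=0$ and a length-$\le 1$ resolution, hence no correction term) is fine; it is the local case that needs repair, either along the lines above or by arguing essentiality as the paper does.

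Your parts~(a) and~(c) agree with the paper's proof, which treats both tersely as following from the (visible) splitting of the complexes $P(\cdot)_\bullet$; your more explicit accounting of flags and the Horseshoe lemma is a sound elaboration of the same idea.
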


The extra hypothesis in part (b) plays the role of the condition that one
should not contract a loop of a classical matroid.  Part (a) has no corresponding
condition because all dual matroids are essential, which adjusts for the
discrepancy that might otherwise be expected if a coloop is deleted.

\begin{proof}
\noindent\emph{To (a).}  This is immediate: $M(A)$ equals $(M\setminus i)(A)$
for $A\not\ni i$, so the chains of maps used in constructing $P(A)_\bullet$
are identical in $M^*$ and $(M\setminus i)^*$.  In the former dual,
the dual of $d_1$ in $P(A)_\bullet$ is $M^*(E\setminus A) = (M^*/i)(E\setminus i\setminus A)$; in the latter it is $(M\setminus i)^*(E\setminus i\setminus A)$.

\noindent\emph{To (b).}  The matroid $M^*\setminus i$ over $R$ is obtained 
by the construction of Definition~\ref{def:dual} on the 
matroid $M/i$ except that
the resolution used of $(M/i)(\emptyset) = M(\{i\})$ 
is not the one specified there, but rather $P(\{i\})_\bullet$.
By Corollary~\ref{cor:dual essential}, $M^*\setminus i$ is the
direct sum of $(M/i)^*$ and a projective empty matroid,
so it is enough to show that $M^*\setminus i$ is essential,
i.e.\ that $M^*(E\setminus i)$ has no projective summands.

If $M^*(E\setminus i)$ had a nonzero projective summand $Q$, it would pull
back to a summand of $P_1(\{i\})^\vee$, and this can be done
in such a way that $0\to Q\to Q\to 0$ is a summand of
\[P_0(\{i\})^\vee\to P_1(\{i\})^\vee\to M^*(E\setminus i)\to 0.\]
In the undualized complex $P(\{i\})_\bullet$, this would appear
as a summand $Q^\vee\to 0$ of~$d_1$.   

However, in the construction of $P(\{i\})_\bullet$, 
the fact that the map $M(\emptyset)\twoheadrightarrow M(\{i\})$ has kernel $R$
implies that the images $d_1(P_1^\emptyset)$ and $d_1(R)$ intersect in zero.
Because $d_1|_{P_1^\emptyset}$ is part of a minimal resolution,
it has no direct summand lying in ite kernel.
Together, these implies that there is no nonzero summand $Q$ of $P_1(A)$ 
lying in $\ker d_1$, as was sought above.

\noindent\emph{To (c).}  To distinguish the complexes used
in the various matroids at hand, let us write the complex 
$P(A)_\bullet$ for the matroid $M$ with a subscript $P_M(A)_\bullet$
(and similarly for the other matroids involved).
For subsets $A$ and $A'$ of the respective
ground sets of $M$ and $M'$, it is easy to check from the definition 
that these complexes (can be taken to) split as
\[P_{M\oplus M'}(A\amalg A')_\bullet = P_{M}(A)_\bullet \oplus P_{M'}(A')_\bullet ,\]
from which the result follows.
\end{proof}

\begin{proposition}\label{p:duality}
If $M$ is a matroid over a ring $R$ for which duality is defined, then
$M$ is the direct sum of $M^{**}$ and a projective empty matroid.
In particular, if $M$ is essential, $M^{**}=M$.
\end{proposition}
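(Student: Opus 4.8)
The plan is to compute $M^{**}$ directly from the definition of the dual, applied twice, and observe that the complexes involved are themselves dual to one another. Since $M^{*}$ is essential by Corollary~\ref{cor:dual essential}, it suffices (by Lemma~\ref{lem:essential} and the uniqueness of the essential decomposition in the Dedekind and local cases) to show that $M^{**}$ and $M$ agree up to a projective empty summand; indeed, because both $M^{**}$ and the essential part of $M$ are essential, establishing this relation forces $M^{**}$ to be precisely the essential part of $M$, and $M^{**}=M$ when $M$ itself is essential. So I would phrase the goal as: $M^{**}(A)\cong M(A)$ up to a fixed projective summand independent of $A$.

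First I would fix a maximal flag $\emptyset=A_0\subsetneq A_1\subsetneq\cdots\subsetneq A_{|A|}=A$ and recall that $M^{*}(E\setminus A)=\coker\!\big(P_0(A)^\vee\xrightarrow{d_1^\vee}P_1(A)^\vee\big)$, where $P(A)_\bullet$ is the filtered resolution built from the short exact pieces \eqref{eq:duality res 1} and the resolution \eqref{eq:duality res 2} of $M(\emptyset)$. The key computational input is the identification, from Lemma~\ref{lem:dual is Ext} (and its local analogue via minimal resolutions), that in the Dedekind case $M^{*}(E\setminus A)\cong\tors{M(A)}\oplus P_2(A)^\vee$, and that $P_2(A)$ is trivial when we start from a length-$\le 1$ resolution. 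To form $M^{**}$, one now needs a maximal flag on the complementary ground set and a resolution of $M^{*}(\emptyset)=M^{*}(E\setminus E)$. The crucial observation is that the complex $P(E\setminus A)_\bullet$ computing $M^{**}$ can be taken to be (a summand of) the $R$-linear dual of the complex $P(A)_\bullet$ computing $M^{*}$: dualizing the two-term complex $P_1(A)^\vee\leftarrow P_0(A)^\vee$ recovers $P_0(A)\to P_1(A)$ — i.e.\ $P_0(A)^{\vee\vee}=P_0(A)$ and $P_1(A)^{\vee\vee}=P_1(A)$ since these are f.g.\ projective, hence reflexive — and its cokernel on the correct side is $\ker(P_0(A)\to M(A))$'s cousin, ultimately yielding back $M(A)$ up to torsion-versus-projective bookkeeping. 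I would make this precise by tracking, step by step along the flag, that the maps inserted in \eqref{eq:duality res 1} for $M^{*}$ are the duals of those for $M$, using Lemma~\ref{lem:Dedekind kernels} (or minimality, locally) to see the choices don't matter.

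Concretely, I would argue as follows. Dualizing \eqref{eq:duality res 2} and splicing gives a presentation of $M^{*}(E)=\tors{M(\emptyset)}$ together with a projective cokernel term; feeding this into the dual construction and using $\Ext^1(\Ext^1(N,R),R)\cong\tors N$ for f.g.\ modules over a Dedekind domain (equivalently, over a local ring of the relevant type, via minimal resolutions and Matlis-type duality arguments) recovers $\tors{M(A)}$, while the projective pieces $P_2(A)^\vee$ dualize back to $P_2(A)$, which is trivial or gets absorbed into the essential/projective split. Assembling, $M^{**}(A)\cong\tors{M(A)}\oplus(\text{projective independent of }A)$, and since $M$ itself is $\tors{M(A)}\oplus\projpart{M(A)}$ with $\projpart{M(\emptyset)}$ the projective summand (constant up to the empty-matroid part), we get the claimed decomposition. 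The main obstacle I anticipate is the double-$\Ext$ (biduality) step: verifying that $\Ext^1(\Ext^1(M(A),R),R)\cong\tors{M(A)}$ canonically enough to be compatible with the quotient maps of the matroid — in the Dedekind case this is essentially local and reduces to the DVR computation $\Ext^1(\Ext^1(R/\mf m^k,R),R)\cong R/\mf m^k$, but in the local (higher-dimensional) case one must instead lean on the minimal free resolution being a summand of every free resolution, as flagged in the remark preceding the corollary, and check the bidual of a minimal two-term presentation returns the original module. I would isolate that biduality computation as the one genuinely substantive lemma and let the rest follow by the same diagram-chasing bookkeeping already used in the proofs of Lemma~\ref{p:dual well-defined} and Theorem~\ref{p:dual}.
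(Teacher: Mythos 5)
Your strategy is genuinely different from the paper's. The published proof first settles the realizable case by explicitly writing $d_1^E$, its dual, and the change-of-basis matrices, concluding that ${M^*}'$ is realized in $\coker d$ by the original $\{x_a\}$; it then extends to arbitrary $M$ by observing that every two-element minor is realizable (this is just axiom~(M)) and chaining overlapping two-element minors together, using cancellation of projective summands to pin down the constant discrepancy. Your route is instead a purely homological biduality argument: $M^*(E\setminus A)$ is an Auslander-style transpose of $M(A)$, and transpose should be involutive up to projectives. This is a legitimate alternative — indeed it is morally what Lemma~\ref{lem:dual is Ext} already sets up in the Dedekind case, and it avoids the realizable-case detour — but two steps you treat as bookkeeping are in fact where the real work lives.

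First, the claim that "the complex $P(E\setminus A)_\bullet$ computing $M^{**}$ can be taken to be a summand of the $R$-linear dual of $P(A)_\bullet$" is not automatic. The flag complex for $M^*$ is built by starting from a resolution of $M^*(\emptyset)$ and splicing in kernels of maps $M^*(B)\to M^*(Bb)$ chosen by axiom~(M1). To identify this with $P(A)_\bullet^\vee$ you need to know that those chosen maps are (co)compatible with the dualized maps in diagram~\eqref{eq:dual maps} from the proof of Theorem~\ref{p:dual}; writing "I would make this precise by tracking" names the step but does not close it, and closing it is essentially as long as the paper's explicit matrix computation. Second, even granting $M^{**}(A)\cong M(A)\oplus(\text{projective})$ for each $A$ separately, you need the projective discrepancy to be a single fixed module independent of $A$ — which is exactly the content of "direct sum of $M^{**}$ and a projective empty matroid." Your phrase "constant up to the empty-matroid part" presumes the conclusion; the paper proves it by the chaining argument (and cancellation of projective summands), and one can alternatively prove it by $K$-theoretic bookkeeping, but some global argument is needed. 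Finally, in the local case the right statement is not literal bidual-of-presentation equality but Auslander transpose involutivity up to free summands, relying on the minimal resolution splitting off (as in the local half of Lemma~\ref{p:dual well-defined}); your phrasing "check the bidual of a minimal two-term presentation returns the original module" is false as stated and needs to be "up to a free summand." With those three points repaired, the approach goes through and is arguably cleaner than the realizable-plus-chaining route for the Dedekind case, though the paper's proof has the advantage of treating the Dedekind and local cases uniformly via realizability of two-element minors.
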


\begin{proof}
Suppose first that $M$ is a realizable matroid over $R$, and fix
a realization $(x_a : a\in E)$.  The proof of Proposition~\ref{p:dual realizable}
gives a configuration of module elements realizing~$M^*$.  
Now, if $M^*(\emptyset)$ is given a projective resolution whose 
first map is $(d_1^E)^\vee$, and this resolution is used in place of 
\eqref{eq:duality res 2} to construct a system of modules ${M^*}'$, 
Corollary~\ref{cor:dual essential} shows that ${M^*}'$ is the direct
sum of $M^{**}$ and a projective empty matroid.

Let us write simply $d = d_1^\emptyset:P_1^\emptyset\to P_0^\emptyset$ for the first differential
in the resolution of $M(\emptyset)$.  Then the map $d_1^E$ is given
in matrix notation, treating direct sums as spaces of column vectors, by
\[d_1^E : P_1^\emptyset\oplus R^E 
\stackrel{\begin{pmatrix}d & x\end{pmatrix}}{-\!\!\!-\!\!\!\longrightarrow} 
P_0^\emptyset\]
where $x:R^E\to P_0^\emptyset$ is determined by $x(e_a) = \tilde x_a$, this $\tilde x_a$
being a lift of $x_a$ to $P_0^\emptyset$.
In the dual, the vector configuration which realizes $M^*$ are
the images of the standard basis of $(R^E)^\vee$.  So when
we run the duality construction the second time, the first differential
of the resolution $P(E)_\bullet$ in $M^*$ is
\[(P_0^\emptyset)^\vee\oplus R^E 
\stackrel{\begin{pmatrix}d^\vee & 0 \\ x^\vee & 1\end{pmatrix}}{-\!\!\!-\!\!\!\longrightarrow}
(P_1^\emptyset)^\vee\oplus (R^E)^\vee,\]
where the map $1:R^E\to(R^E)^\vee$ sends the primal to the dual standard basis.
Therefore, ${M^*}'$ is realized within the cokernel of
\[P_1^\emptyset\oplus R^E 
\stackrel{\begin{pmatrix}d & x \\ 0 & 1\end{pmatrix}}{-\!\!\!-\!\!\!\longrightarrow}
P_0^\emptyset\oplus (R^E)^\vee,\]
(the maps $1$ and $0$ being self-dual) by the image of the standard basis $\{(0,e^a):a\in E\}$
of the $(R^E)^\vee$ summand
on the right.  By a change of basis on the target of the above map,
corresponding to composition on the right by the isomorphism
\[P_0^\emptyset\oplus (R^E)^\vee
\stackrel{\begin{pmatrix}1&-x\\0&1\end{pmatrix}}{-\!\!\!-\!\!\!\longrightarrow}
P_0^\emptyset\oplus (R^E)^\vee,\]
we get that ${M^*}'$ is realized within the cokernel of
\[P_1^\emptyset\oplus R^E 
\stackrel{\begin{pmatrix}d & 0 \\ 0 & 1\end{pmatrix}}{-\!\!\!-\!\!\!\longrightarrow}
P_0^\emptyset\oplus (R^E)^\vee,\]
with the realizing vector configuration the image of $\{(\tilde x_a,e^a):a\in E\}$.  But the identity map
$R^E\to (R^E)^\vee$ that is a summand of the above map can be ignored in the cokernel;
that is, ${M^*}'$ is realized in $\coker d$ by $\{x_a:a\in E\}$, which shows ${M^*}'\cong M$.
We conclude that $M$ is the direct sum of $M^{**}$ and a projective empty matroid.

Now, let us drop the assumption that $M$ is realizable.  Axiom (M) indicates
that every two-element minor of $M$ is realizable, so by the above discussion
and Proposition~\ref{p:duality2}(a,b), the modules in the corresponding minor of $M^{**}$
differ from those of $M$ only up to projective summands.  
The rings over which we have defined matroid duality have the cancellative property
for projective summands: that is, if $N$, $N'$, $P$ are $R$-modules with $P$ projective
such that $N\oplus P\cong N'\oplus P$, then $N\cong N'$.  
So, since any two subsets $A,B\subseteq E$,
may be linked with a chain of overlapping two-element minors on which the 
realizable result above may be invoked,
we conclude there are fixed projective modules $P$, $Q$ such that
$M(A)\oplus P\cong M^{**}(A)\oplus Q$ for any $A\subseteq E$.
The proposition is proved by observing that, 
since $M^{**}$ is essential, $P$ must be zero; and if $M$ is essential as well,
so must $Q$ be zero.
\end{proof}

The remainder of this section is dedicated to explicit formulae for the modules
making up the dual matroid in the Dedekind case.

\begin{proposition}\label{p:cl(M)}
Let $M$ be a matroid over a Dedekind domain $R$.  The element of $\Pic(R)$
\begin{align*}
\cl(M) &\doteq \det(\projpart{M(A)}) + \det(\projpart{M^*(E\setminus A)}) + \det(\tors{M(A)})
\\&= \det(M(A)) + \det(M^*(E\setminus A)) - \det(\tors{M(A)})
\end{align*}
 is independent of the choice of $A\subseteq E$.
\end{proposition}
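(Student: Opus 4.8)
The plan is to pass to the algebraic $K$-theory group $K_0(R)$, where by Proposition~\ref{p:Dedekind K} the operation $\det$ is the projection onto the $\Pic(R)$ summand under $\Phi$, hence a group homomorphism additive over short exact sequences. Over a Dedekind domain $M(A)\cong\tors{M(A)}\oplus\projpart{M(A)}$ by Proposition~\ref{p:Dedekind structure}, so $\det(M(A))=\det(\tors{M(A)})+\det(\projpart{M(A)})$, and likewise for $M^*(E\setminus A)$. In particular the two displayed formulas for $\cl(M)$ agree exactly because $\tors{M^*(E\setminus A)}\cong\tors{M(A)}$ (part of Corollary~\ref{cor:dual essential}), and both reduce to $\cl(M)=\det(M(A))+\det(\projpart{M^*(E\setminus A)})$. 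So it is enough to evaluate this last expression and watch the dependence on $A$ disappear.

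First I would handle the dual summand. By Corollary~\ref{cor:dual essential} in the Dedekind case, $M^*(E\setminus A)\cong\tors{M(A)}\oplus P_2(A)^\vee$ with $P_2(A)$ projective, so $\projpart{M^*(E\setminus A)}\cong P_2(A)^\vee$ and therefore $\det(\projpart{M^*(E\setminus A)})=\det(P_2(A)^\vee)=-\det(P_2(A))$, using that dualizing inverts a determinant in $\Pic(R)$ (as $\det(P^\vee)=(\det P)^{-1}$). Next I would handle $\det(M(A))$. When $R$ is Dedekind, the complex $P(A)_\bullet$ constructed above Definition~\ref{def:dual} is a projective resolution of $M(A)$ of length at most $2$, of the form $0\to P_2(A)\to P_1(A)\to P_0(A)\to M(A)\to 0$, with $P_0(A)=P_0^\emptyset$ and $P_1(A)=P_1^\emptyset\oplus R^{|A|}$ coming from the fixed length-$\le 1$ resolution~\eqref{eq:duality res 2} of $M(\emptyset)$ together with the $|A|$ sequences of the shape~\eqref{eq:duality res 1}; its degree-$2$ term $P_2(A)$ is an iterated extension of the ideals $I$ appearing in those sequences, so in particular its class in $K_0(R)$ is their sum. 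Taking the alternating sum of classes in $K_0(R)$ and applying $\det$, the free summand $R^{|A|}$ drops out and we get $\det(M(A))=\det(P_0^\emptyset)-\det(P_1^\emptyset)+\det(P_2(A))$.

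Adding the two computations, the $\det(P_2(A))$ terms cancel, leaving $\cl(M)=\det(P_0^\emptyset)-\det(P_1^\emptyset)=\det(M(\emptyset))$, which is visibly independent of $A$; this proves the proposition and incidentally identifies the invariant as $\det(M(\emptyset))$. Setting aside the routine bookkeeping, the point I expect to need the most care is the structural claim about $P(A)_\bullet$ in the Dedekind case: that the Horseshoe-lemma construction terminates in degree~$2$ and that $P_2(A)$ has class $\sum_j[I_j]$ in $K_0(R)$. A secondary subtlety is that $\Ext^1(M(A),R)$ need not be isomorphic to $\tors{M(A)}$ as a module, only to have the same class in $K_0(R)$; since everything above takes place at the level of determinants, this causes no difficulty.
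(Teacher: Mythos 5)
Your proof is correct, but it takes a genuinely different route from the paper's. The paper argues by telescoping along covers $A\subsetneq Ab$: exactly one of the two dual surjections $M(A)\to M(Ab)$ and $M^*(E\setminus(Ab))\to M^*(E\setminus A)$ has a rank drop, forcing its kernel to be free $\cong R$ and hence determinant-preserving, while the other map has torsion kernel and so preserves the determinant of the projective part; adding the two equalities gives $\cl(M)(A)=\cl(M)(Ab)$, and one concludes by connecting any two sets through a chain of covers. You instead compute both summands of $\cl(M)=\det(M(A))+\det(\projpart{M^*(E\setminus A)})$ globally from the resolution $P(A)_\bullet$, using $[M(A)]=[P_0^\emptyset]-[P_1^\emptyset\oplus R^{|A|}]+[P_2(A)]$ in $K_0(R)$ together with $\projpart{M^*(E\setminus A)}\cong P_2(A)^\vee$ from Lemma~\ref{lem:dual is Ext}, so that the $\det(P_2(A))$ terms cancel. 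Your route buys the explicit identification $\cl(M)=\det(M(\emptyset))$ in one stroke (which the paper only notes afterward, via the independent-set remark) and avoids the case split, at the cost of leaning harder on the Dedekind-specific structure of $P(A)_\bullet$ — namely that the Horseshoe construction terminates with a projective $P_2(A)$ that is an iterated extension of the ideals from~\eqref{eq:duality res 1}, and that $\det(P^\vee)=-\det(P)$ for projectives. Both of these are available from the construction preceding Definition~\ref{def:dual} and from Proposition~\ref{p:Dedekind structure}, and as you note you do not actually need the precise $K_0$-class of $P_2(A)$, only its cancellation. Your closing caveat about $\Ext^1(M(A),R)$ versus $\tors{M(A)}$ is unnecessary caution (over a Dedekind domain they are in fact isomorphic, as the paper uses in Corollary~\ref{cor:dual essential}), but it does no harm since you only work at the level of determinants.
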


In particular, if $A$ is an independent set (or, more strongly, a basis) 
of the generic matroid of~$M$,
then by Corollary \ref{cor:dual essential} we have $\det(M(A)) = \cl(M)$.

\begin{proof}
Given a set $A\subseteq E$,
let $\cl(M)(A)$ be the value of $\cl(M)$ computed using that given
choice of~$A$.  It is enough to show that, for each $A\subseteq E$ and $b\in E\setminus A$,
$\cl(M)(A)$ equals $\cl(M)(Ab)$.

Given $A$ and $b$, it is true of exactly one of the two dual maps
$M(A)\to M(Ab)$ and $M^*(E\setminus(Ab))\to M^*(E\setminus A)$
provided by condition~(M1) that the rank of the target is one less
than the rank of the source.  
In the other map, these two ranks are equal.

If the first map has this rank drop, then its kernel must be isomorphic to~$R$,
and the exact sequence
\[0\to R\to M(A)\to M(Ab)\to 0\]
implies that $\det(M(A)) = \det(\projpart{M(A)}) + \det(\tors{M(A)})$ equals
$\det(M(Ab)) = \det(\projpart{M(Ab)}) + \det(\tors{M(Ab)})$.
Since the second map has no rank drop, its kernel is
contained in the torsion submodule of its source, so
$\det(\projpart{M(E\setminus A)})$ equals $\det(\projpart{M(E\setminus(Ab))})$.
Adding these equalities, we have $\cl(M)(A) = \cl(M)(Ab)$.  

If instead the second map has the rank drop, then 
the same argument shows that $\cl(M)(A) = \cl(M)(Ab)$
after exchanging $M$ for $M^*$ and sets $A$ for their complements $E\setminus A$,
and using the fact that $\tors{M(A)}\cong \tors{M^*(E\setminus A)}$.
\end{proof}

Recall that $K_0(R) = \Z\oplus\Pic(R)$.
Let $\sigma:K_0(R)\to K_0(R)$ be the involution acting as
the identity on the summand $\Z$ and negation on the summand $\Pic(R)$.

\begin{corollary}\label{cor:dual}
If $M$ is a matroid over a Dedekind domain $R$,
then $M^*(E\setminus A)$ is the module
whose torsion part is $\tors{M^*(E\setminus A)} \cong \tors{M(A)}$
and whose projective part is determined by the equality
\begin{equation}\label{eq:dual}
[\projpart{M^*(E\setminus A)}] = \sigma\big([M(A)] + |A|\cdot[R] - [M(\emptyset)]\big)
\end{equation}
in $K_0(R)$.
\end{corollary}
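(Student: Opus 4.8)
The plan is to extract the statement from the two results that immediately precede it, namely Proposition~\ref{p:cl(M)} and Corollary~\ref{cor:dual essential} (via Lemma~\ref{lem:dual is Ext}), together with the explicit shape of the resolution $P(A)_\bullet$. The torsion part is the easy half: Lemma~\ref{lem:dual is Ext} applied to the four-term exact sequence
\[0\to P_2(A)\to P_1(A)\to P_0(A)\to M(A)\to 0\]
gives $M^*(E\setminus A)\cong\Ext^1(M(A),R)\oplus\Hom(P_2(A),R)$, and over a Dedekind domain $\Ext^1(M(A),R)\cong\tors{M(A)}$ while $\Hom(P_2(A),R)$ is projective; this establishes $\tors{M^*(E\setminus A)}\cong\tors{M(A)}$, exactly as already noted in the proof of Corollary~\ref{cor:dual essential}.

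For the projective part I would work in $K_0(R)=\Z\oplus\Pic(R)$ and compute $[\projpart{M^*(E\setminus A)}]=[M^*(E\setminus A)]-[\tors{M(A)}]$. First, the rank: $\rk M^*(E\setminus A)$ is the rank of $\Hom(P_2(A),R)$, i.e.\ $\rk P_2(A)$, and from the filtration of $P(A)_\bullet$ described in Section~\ref{sec:duality} — one copy of $P_2^\emptyset$ (which is zero in the Dedekind case, since $M(\emptyset)$ has a length-$1$ projective resolution) and $|A|$ copies of an ideal $I\subseteq R$ — we read off $\rk P_2(A)=\rk P_1^\emptyset+|A|-\rk P_0^\emptyset$ adjusted by $\rk M(A)$; more cleanly, additivity of rank on the four-term sequence gives $\rk P_2(A)=\rk P_1(A)-\rk P_0(A)+\rk M(A)$, and since $\rk P_1(A)=\rk P_1^\emptyset+|A|$, $\rk P_0(A)=\rk P_0^\emptyset$, and $\rk P_0^\emptyset-\rk P_1^\emptyset=\rk M(\emptyset)$, this is $|A|+\rk M(A)-\rk M(\emptyset)$, matching the $\Z$-component of $\sigma\big([M(A)]+|A|[R]-[M(\emptyset)]\big)$ (on which $\sigma$ acts trivially). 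For the $\Pic(R)$-component, I would invoke Proposition~\ref{p:cl(M)}: evaluating $\cl(M)$ at $A$ and at $\emptyset$ and using $\projpart{M^*(E)}=0$ (Corollary~\ref{cor:dual essential}) gives $\det(\projpart{M^*(E\setminus A)})=\det(\tors{M(\emptyset)})-\det(\projpart{M(A)})-\det(\tors{M(A)})$; rewriting via $\det(M(A))=\det(\projpart{M(A)})+\det(\tors{M(A)})$ and $\det(\tors{M(\emptyset)})=\det(M(\emptyset))$, this becomes $\det(M(\emptyset))-\det(M(A))$, which is exactly $-\big(\det(M(A))-\det(M(\emptyset))\big)$, i.e.\ the $\Pic(R)$-component of $\sigma$ applied to $[M(A)]+|A|[R]-[M(\emptyset)]$ (note $[R]$ has zero $\Pic$-component, so the $|A|[R]$ term contributes nothing here). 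Assembling the two components gives \eqref{eq:dual}.

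The main obstacle, such as it is, is bookkeeping rather than conceptual: one must be careful that the $\sigma$ really does act as stated — trivially on the rank side, by negation on the determinant side — and that the $|A|\cdot[R]$ term lands only in the $\Z$-summand (so it affects the rank equality but drops out of the determinant equality). Once Propositions~\ref{p:cl(M)}, \ref{p:Dedekind K} and Corollaries~\ref{cor:dual essential} are in hand, there is essentially nothing left to prove beyond matching components in $\Z\oplus\Pic(R)$; I would present it as a short deduction rather than a standalone argument.
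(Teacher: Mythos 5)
Your proposal is essentially correct and follows the paper for the torsion part (Lemma~\ref{lem:dual is Ext}) and for the $\Pic(R)$ component (Proposition~\ref{p:cl(M)} combined with $\projpart{M^*(E)}=0$). The one substantive difference is the rank ($\Z$) component: the paper proves it by induction on $|E\setminus A|$, comparing ranks across a covering relation $A\subset Ab$ via the cyclic kernel of the dualized map, whereas you read off $\rk P_2(A)=|A|+\rk M(A)-\rk M(\emptyset)$ directly from additivity of rank on the four-term exact sequence $0\to P_2(A)\to P_1(A)\to P_0(A)\to M(A)\to 0$ together with the known shape $P_1(A)\cong P_1^\emptyset\oplus R^{|A|}$, $P_0(A)=P_0^\emptyset$, and the length-one resolution of $M(\emptyset)$. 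Both arguments are routine; your direct count is arguably cleaner and avoids the paper's somewhat awkward back-reference into the proof of Proposition~\ref{p:duality}.

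There is one small algebraic slip in your $\Pic$ bookkeeping. Evaluating $\cl(M)$ at $\emptyset$ gives $\cl(M)=\det(\projpart{M(\emptyset)})+\det(\tors{M(\emptyset)})=\det(M(\emptyset))$, so the intermediate identity should read $\det(\projpart{M^*(E\setminus A)})=\det(M(\emptyset))-\det(\projpart{M(A)})-\det(\tors{M(A)})$. You wrote $\det(\tors{M(\emptyset)})$ in place of $\det(M(\emptyset))$ (dropping the $\det(\projpart{M(\emptyset)})$ term) and then asserted $\det(\tors{M(\emptyset)})=\det(M(\emptyset))$, which is false in general --- it holds only when $\projpart{M(\emptyset)}$ is free, and $M$ is not assumed essential here. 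The two slips cancel, so your final expression $\det(M(\emptyset))-\det(M(A))$ is correct; this is a transcription issue rather than a genuine gap.
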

Note that, over a field, equation~\eqref{eq:dual} specializes to the formula for
dualizing rank functions familiar from the matroid setting,
\[\cork_M^*(E\setminus A) = \cork_M(A) + |A| - r\]
where $r$ is the rank of~$M$.

\begin{proof}
The assertion on the torsion parts is noted in the discussion after Lemma~\ref{lem:dual is Ext}.

As for the projective part, we treat the summands of $K_0(R) = \Z\oplus\Pic(R)$ separately.
In the $\Pic(R)$ summand, Proposition~\ref{p:cl(M)} implies that
\[\det(\projpart{M^*(E\setminus A)}) + \det(M(A)) = \cl(M) = \det(\projpart{M^*(E)}) + \det(M(\emptyset)),\]
which becomes the $\Pic(R)$ part of~\eqref{eq:dual} on noting that
$M^*$ is essential so $\projpart{M^*(E)}$ is trivial.

Regarding the rank, 
consider again the ideal $I\cong\coker(f^\vee)$ in the proof of Proposition~\ref{p:duality}.
Since $^\vee$ preserves the rank and $f^\vee$ is an injection, we have 
\[\rk(I) = \rk(M(A))-\rk (M(Ab)).\]
As well,
\[\rk (M^*(E\setminus Ab))-\rk (M^*(E\setminus A)) = \rk (R/I) = 1-\rk (I).\]
By induction on the size of $E\setminus A$, we get that 
$\rk (M(A)) - \rk (M^*(E\setminus A)) + |A|$ is constant, and thus
always equal to its value $\rank M(\emptyset)$ taken when $A=\emptyset$.
This proves the part of~\eqref{eq:dual} in the $\Z$ summand.
\end{proof}

\begin{proposition}\label{p:duality and base change}
Let $M$ be a matroid over a Dedekind domain $R$.
\begin{enumerate}\renewcommand{\labelenumi}{(\alph{enumi})}
\item Let $f:R\to S$ be a flat map to a Dedekind domain $S$.
Then $(M\otimes S)^*=M^*\otimes S$ (as matroids over~$S$).
\item Let $f:R\to S$ be the quotient by a maximal ideal.
Then, again, $(M\otimes S)^*=M^*\otimes S$.
\end{enumerate}
\end{proposition}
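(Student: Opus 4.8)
The plan is to prove both parts by invoking the construction of Definition~\ref{def:dual} and checking that it commutes with the base change $\mbox{---}\otimes_R S$ under the respective hypotheses. The key point is that in each case the complexes $P(A)_\bullet$ used to compute $M^*$ base-change, up to the harmless ambiguity of projective summands controlled by Corollary~\ref{cor:dual essential}, to complexes that legitimately compute $(M\otimes S)^*$; the cokernel of a dualized map of projective modules then behaves well under $\otimes_R S$ because dualizing and base change interact predictably for projectives.

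For part~(a), with $f:R\to S$ flat, I would first note that tensoring the exact sequences \eqref{eq:duality res 1} and the projective resolution \eqref{eq:duality res 2} of $M(\emptyset)$ with $S$ preserves exactness (flatness) and preserves projectivity, so $P(A)_\bullet\otimes_R S$ is a projective resolution of the appropriate kernel for the matroid $M\otimes S$ — note that $(M\otimes S)(A)=M(A)\otimes_R S$ by definition, and the maps of $M\otimes S$ required by axiom~(M) are exactly the base changes of those of $M$, as in the proof of Proposition~\ref{p:base change}. Next, for a finitely presented module, in particular for the projective $P_i(A)$, there is a natural isomorphism $\Hom_R(P_i(A),R)\otimes_R S\cong\Hom_S(P_i(A)\otimes_R S,\,S)$, compatible with the differentials; hence the dual complex base-changes, and since $\mbox{---}\otimes_R S$ is right exact it commutes with taking the cokernel of $d_1^\vee$. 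This gives $(M\otimes S)^*(E\setminus A)\cong M^*(E\setminus A)\otimes_R S$ for all $A$, i.e.\ the claimed equality of matroids over~$S$. One small point to dispatch: Definition~\ref{def:dual} in the Dedekind case uses a resolution \emph{of length at most~$1$}, and $P_1^\emptyset\otimes_R S$ need not be the chosen short resolution over $S$, but by Corollary~\ref{cor:dual essential} any projective resolution yields $M^*$ up to a projective empty matroid, and the construction produces an essential matroid, so the ambiguity collapses.

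For part~(b), with $f:R\to S=R/\mf m$ a quotient by a maximal ideal, $S$ is a field, so $M\otimes S$ is (an essential matroid plus a projective empty matroid, whence after trimming) a classical matroid, and its dual is the classical Gale/corank dual; I would verify the formula by comparing with Corollary~\ref{cor:dual}. Concretely, base-changing \eqref{eq:dual} along $f$ and using that $\otimes_R S$ sends the class $[N]\in K_0(R)$ to its image under the induced map $K_0(R)\to K_0(S)=\Z$ (which kills the $\Pic(R)$ summand and records the rank), one reads off that $\projpart{M^*(E\setminus A)}\otimes S$ has the dimension prescribed by the classical corank-duality formula for $M\otimes S$, namely $\cork_{M\otimes S}(A)+|A|-r$; since over a field the module is determined by its dimension, this identifies $(M\otimes S)^*(E\setminus A)$ with $M^*(E\setminus A)\otimes S$. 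One must be slightly careful because $\mbox{---}\otimes_R S$ is only right exact, so the projective resolution of $M(\emptyset)$ does not base-change to a resolution; but the \emph{cokernel} $M^*(E\setminus A)=\coker(d_1^\vee)$ does base-change, and its rank (hence everything, over a field) is computed from $K_0$, which is all that is needed.

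The main obstacle I anticipate is precisely the mismatch in part~(b) between the homological construction over $R$ and over the field $S=R/\mf m$: the resolutions chosen in Definition~\ref{def:dual} do not survive right-exact base change, so the argument cannot be ``tensor the whole construction and conclude'' as in part~(a); instead one has to pass through the explicit $K_0$-formula of Corollary~\ref{cor:dual} and the functoriality of $\Phi:K_0(R)\to\Z\oplus\Pic(R)$ along $f$. Getting the bookkeeping of torsion versus projective parts right under $\otimes_R S$ — in particular checking that $\tors{M(A)}\otimes S$ contributes correctly and that no spurious rank appears from the failure of left exactness — is the step that will require the most care.
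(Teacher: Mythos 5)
Part (a) of your proposal is correct and takes a genuinely different route from the paper: you base-change the whole homological construction (the sequences \eqref{eq:duality res 1}, the resolution \eqref{eq:duality res 2}, and the complexes $P(A)_\bullet$) along the flat map, using $\Hom_R(P,R)\otimes_R S\cong\Hom_S(P\otimes_R S,S)$ for projectives and right-exactness for the cokernel, whereas the paper checks torsion and projective parts separately via the explicit formula of Corollary~\ref{cor:dual} and the functoriality of $K_0$ under flat maps. Your route works; the one point to make explicit is that the collapse of the projective ambiguity requires $M^*\otimes S$ (not just $(M\otimes S)^*$) to be essential, which holds because torsion $R$-modules stay torsion under a flat map of Dedekind domains.

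Part (b) has a genuine gap. When $S=R/\mf m$ the functor $\mbox{---}\otimes_R S$ is not exact, so it does not induce a map on $K_0(R)$, and $\dim_S(N\otimes S)$ is \emph{not} $\rk(N)$: the correct formula is $\dim_S(N\otimes S)=\rk(\projpart{N})+\dim_S\Tor_1^R(N,S)$, the second term counting the indecomposable summands of $\tors{N}$ supported at $\mf m$. Consequently your claim that $\projpart{M^*(E\setminus A)}\otimes S$ has dimension $\cork_{M\otimes S}(A)+|A|-r$ fails: take $R=\Z$, $S=\Z/2$, $M(\emptyset)=\Z/2$, $M(1)=0$; then $M^*(\emptyset)=\Z$, $M^*(1)=\Z/2$, while $M\otimes S$ is a coloop with $r=1$ even though $\rk M(\emptyset)=0$, and your two sides differ by $\dim_S\Tor_1^{R}(M(\emptyset),S)=1$. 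The actual content of (b) is the matching of these $\Tor_1$ corrections: one compares $\cork_{M\otimes S}(A)=\rk M(A)+\dim_S\Tor_1^R(M(A),S)$ with $\dim_S(M^*(E\setminus A)\otimes S)=\rk M^*(E\setminus A)+\dim_S\Tor_1^R(M^*(E\setminus A),S)$, where the rank terms match by \eqref{eq:dual} and the $\Tor_1$ terms match because $\tors{M^*(E\setminus A)}\cong\tors{M(A)}$. Note also that the resulting equality is one of underlying classical matroids (i.e.\ up to a projective empty summand of dimension $\dim_S\Tor_1^R(M(\emptyset),S)$, visible in the example above), so the final step ``over a field the module is determined by its dimension, hence the modules agree'' overreaches as written.
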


\begin{proof}
Since base changes 
are computed one module at a time, these are straightforward to check 
given Corollary~\ref{cor:dual}.  

For part~(a), to begin, we have $\tors{(M\otimes S)^*(A)} = \tors{(M(E\setminus A)\otimes S)}$
directly.  On the other hand, since projective modules remain projective
under $\mbox{---}\otimes S$, we find that $\tors{(M^*(A)\otimes S)}$
equals $\tors{(\tors{M^*(A)}\otimes S)}$, which in turn is
$\tors{(\tors{M(E\setminus A)}\otimes S)} = \tors{(M(E\setminus A)\otimes S)}$.
So the torsion parts agree.

As for the projective parts, because $f$ is flat,
the induced homomorphism $f_*:K_0(R)\to K_0(S)$
is given simply by $f_*[N] = [N\otimes S]$.  
Also, torsion modules remain torsion on tensoring with~$S$,
so that the operations $\mbox{---}\otimes S$ and $\projpart{\mbox{---}}$ commute.
Hence, using \eqref{eq:dual},
\[[\projpart{(M\otimes S)^*(A)}] = \sigma\big([M(A)\otimes S] + |A|\cdot[S] - [M(\emptyset)\otimes S]\big)\]
equals
\begin{multline*}
[\projpart{(M^*\otimes S)(A)}] = [\projpart{M^*(A)}\otimes S] = f_*[\projpart{M^*(A)}]
\\= f_*\,\sigma\big([M(A)] + |A|\cdot[R] - [M(\emptyset)]\big)].
\end{multline*}

For part~(b), $M\otimes S$ is a classical matroid, over a field,
and so we need only check that the corank functions of $(M\otimes S)^*$ 
and $M^*\otimes S$ are equal.  Let $I$ be the maximal ideal such that $S=R/I$.  
For a f.g.\ $R$-module $N$, the $S$-dimension of~$N\otimes S$ 
is the rank of $\projpart{N}$ plus $\dim_S\Tor_1^R(N,S)$;
the latter summand is the number of indecomposible
summands of $N$ isomorphic to $R/I^n$ for some~$n$.
Now,
\[\cork_{(M\otimes S)^*}(E\setminus A) = \cork_{M\otimes S}(A) + |A| - r\]
where $r$ is the generic rank of $M\otimes S$.
The term $\cork_{M\otimes S}(A)$ is computed as described just above, for $N = M(A)$:
we get $\rank M(A) + \dim_S\Tor_1^R(M(A),S)$.  
On the other hand, we know that $M^*(E\setminus A)$ has the same
projective part as $M(A)$; this means  $\Tor_1^R(M^*(E\setminus A),S) = \Tor_1^R(M(A),S)$.
And the rank of the module $M^*(E\setminus A)$
is $\rank M(A) + |A| - \rank M(\emptyset) = \rank M(A) + |A| - r$,
by \eqref{eq:dual}.  Therefore the dimension of $M^*(E\setminus A)$ is 
\[\rank M(A) + |A| - r + \dim_S\Tor_1^R(M^*(E\setminus A),S),\] 
which agrees with $\cork_{(M\otimes S)^*}(E\setminus A)$ as required.
\end{proof}

\section{Structure of matroids over a DVR}\label{sec:DVR}
In this section and the next
we record some structure theorems for matroids over~$R$
in terms of structure theorems for the modules over~$R$ themselves.
Our analysis of general Dedekind domains in the next section
will make much use of base changing to localizations of $R$,
so we begin here with the local case, i.e.\ where $R$ is a 
discrete valuation ring.  

We will see that these objects have connections to tropical geometry.
A matroid over a DVR (discrete valuation ring) $R$ defines a point on each Dressian, one of the
tropical analogues of the Grassmannian; 
this is equivalent to being a valuated matroid.  
(As per Remark~\ref{rem:pruefer}, ``discrete'' appears to be inessential here, 
so rings familar to tropicalists like the Puiseux series should also serve.)

For the whole of this section, $R$ will be a DVR with maximal ideal~$\mf m$.  
We first recall the structure theory of f.g.\ $R$-modules:
any indecomposible f.g.\ $R$-module
is isomorphic to either $R$ or $R/\mf m^n$ for some integer $n\geq 1$.
We will sometimes formally subsume $R$ into the latter family
by writing it as $R/\mf m^\infty$.  
So, if $N$ is a f.g.\ $R$-module and $i\geq 1$ is an integer,
define 
\[d_i(N) \doteq \dim_{R/\mf m}(\mf m^{i-1}N / \mf m^iN),\]
and
\[d_{\leq i}(N) \doteq \sum_{j=1}^i d_j(N) = \dim_{R/\mf m}(N / \mf m^iN),\]
and for convenience $d_i(N) = d_{\leq i}(N) = 0$ if $i\leq0$.
Let $d_\bullet(N)$ denote the infinite sequence of these.
We have
\[d_i(R/\mf m^n) = \begin{cases}
1 & 0<i\leq n \\
0 & i>n
\end{cases},\]
where $n$ may be $\infty$.
The following is a quick consequence.
\begin{proposition}\label{p:DVR M0}
Isomorphism types of f.g.\ $R$-modules are in
bijection with nonincreasing infinite sequences $d_\bullet$ of nonnegative integers
indexed by positive integers, 
the bijection being given by
\[N \longleftrightarrow d_\bullet(N).\]
\end{proposition}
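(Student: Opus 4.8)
The plan is to use the structure theory of finitely generated modules over a DVR, together with the well-definedness of the invariants $d_i(N)$. Recall that any f.g.\ $R$-module $N$ decomposes as a finite direct sum of indecomposable pieces, each of the form $R$ or $R/\mf m^n$ for some $n \geq 1$ (with $R = R/\mf m^\infty$ in the adopted convention). So one writes $N \cong \bigoplus_j R/\mf m^{n_j}$ with $\infty \geq n_1 \geq n_2 \geq \cdots \geq n_\ell \geq 1$, and the sequence of multiplicities of the $n_j$'s is visibly determined by (and determines) this partition-like data.

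First I would show that $d_i$ is additive over direct sums: $d_i(N \oplus N') = d_i(N) + d_i(N')$, which is immediate since $\mf m^{i-1}(\cdot)/\mf m^i(\cdot)$ is an additive functor of its argument (or just by tensoring the filtration with the residue field). Combined with the already-recorded computation $d_i(R/\mf m^n) = 1$ for $0 < i \leq n$ and $0$ otherwise, this gives, for $N \cong \bigoplus_j R/\mf m^{n_j}$,
\[
d_i(N) = \#\{\, j : n_j \geq i \,\}.
\]
Hence $d_\bullet(N)$ is the ``conjugate partition'' of the partition $(n_1, n_2, \ldots)$ (with $\infty$'s allowed): it is a nonincreasing sequence of nonnegative integers, and since $N$ is f.g.\ it is eventually constant --- equal to $d_1(N) - (\text{number of free summands}) \cdot 0$, wait, more precisely eventually equal to the number of $n_j$ that are $\infty$, i.e.\ the rank of the free part. (If $N$ is torsion, $d_\bullet(N)$ is eventually $0$.)

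For injectivity of $N \mapsto d_\bullet(N)$: the data $d_\bullet(N)$ recovers the multiset $\{n_j\}$ by the standard conjugation inverse, namely the number of $j$ with $n_j = i$ is $d_i(N) - d_{i+1}(N)$ for finite $i$, and the number of $j$ with $n_j = \infty$ is $\lim_{i\to\infty} d_i(N)$ (which exists as the sequence is nonincreasing and nonnegative, hence eventually constant). By the uniqueness part of the structure theorem, this multiset determines $N$ up to isomorphism. For surjectivity: given any nonincreasing sequence $d_\bullet = (d_1 \geq d_2 \geq \cdots \geq 0)$ of nonnegative integers, it is eventually constant, say eventually equal to $h \geq 0$; then setting $n_j$ via the same conjugation recipe produces finitely many finite parts together with $h$ copies of $\infty$, and the module $R^h \oplus \bigoplus (\text{finite parts})$ realizes the given sequence. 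I do not expect a genuine obstacle here; the only mild point of care is bookkeeping the ``$\infty$'' parts (the free summands) consistently, ensuring the eventual-constant value of $d_\bullet$ is matched to the free rank, and observing that f.g.\ forces the sequence to stabilize so that it is a legitimate inverse image.
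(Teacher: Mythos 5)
Your proof is correct and is essentially the argument the paper has in mind: the paper states the proposition as a ``quick consequence'' of the structure theorem for f.g.\ modules over a DVR together with the computation of $d_i(R/\mf m^n)$, and your write-up simply fills in the conjugate-partition bookkeeping (additivity of $d_i$, recovery of multiplicities from successive differences, matching the stable tail of $d_\bullet$ to the free rank, and the observation that any nonincreasing sequence of nonnegative integers stabilizes) that the paper leaves implicit.
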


This bijection permits a straightforward identification of those isomorphism classes
of modules which permit maps satisfying condition~(M1).
\begin{proposition}\label{p:DVR M1}
Let $N$ and $N'$ be f.g.\ $R$-modules.  There exists a surjection $\phi:N\to N'$
with cyclic kernel if and only if 
\[d_i(\phi) \doteq d_i(N) - d_i(N')\]
equals 0 or~1 for each $i\geq1$.
\end{proposition}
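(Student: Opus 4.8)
The plan is to work entirely with the sequences $d_\bullet$ via the bijection of Proposition~\ref{p:DVR M0}, reducing the claim to a purely combinatorial statement about nonincreasing sequences, and then to build the surjection explicitly when the numerical condition holds. For the forward direction, suppose $\phi:N\to N'$ is a surjection with cyclic kernel $K$, so $K$ is a quotient of $R$, hence $K\cong R/\mf m^k$ for some $0\le k\le\infty$. From the exact sequence $0\to K\to N\to N'\to 0$, tensoring with $R/\mf m^i$ (or rather applying the snake lemma to multiplication by $\mf m^i$, i.e.\ comparing $\mf m^{i}$-adic quotients) I would extract the inequality $0\le d_{\le i}(N)-d_{\le i}(N')\le 1$ for every $i$: the point is that $d_{\le i}(N')=\dim_{R/\mf m}(N'/\mf m^i N')$ and the right-exactness of $-\otimes R/\mf m^i$ gives a surjection $N/\mf m^i N\twoheadrightarrow N'/\mf m^i N'$ whose kernel is a quotient of $K/\mf m^i K\cong R/\mf m^{\min(i,k)}$, hence has dimension $0$ or $1$. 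Since $d_i(N)-d_i(N')=\bigl(d_{\le i}(N)-d_{\le i}(N')\bigr)-\bigl(d_{\le i-1}(N)-d_{\le i-1}(N')\bigr)$ is a difference of two quantities each in $\{0,1\}$, a priori it lies in $\{-1,0,1\}$; I must rule out $-1$. This is exactly where monotonicity of $d_\bullet(N)$ and $d_\bullet(N')$ enters: $d_{\le i}(N)-d_{\le i}(N')$ is nondecreasing in $i$ along the support and then constant, because $K\cong R/\mf m^k$ contributes a run of $1$'s to $d_\bullet$ in positions $1,\dots,k$; so the partial-sum difference can only go up by $1$ once and never come back down, forcing each $d_i(\phi)\in\{0,1\}$. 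Equivalently, $d_i(\phi)$ is the $i$-th coordinate of $d_\bullet(K)$, which is a $0/1$ sequence.

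For the converse, assume $d_i(\phi)\in\{0,1\}$ for all $i$, and let $k$ be the largest index with $d_k(\phi)=1$ (possibly $k=\infty$, possibly $k=0$ if $N\cong N'$). First I would check the sequence $d_i(\phi)$ is itself nonincreasing — i.e.\ is the sequence $1,1,\dots,1,0,0,\dots$ with $k$ ones — which follows because both $d_\bullet(N)$ and $d_\bullet(N')$ are nonincreasing and differ coordinatewise by at most $1$: if $d_j(\phi)=0$ and $d_{j+1}(\phi)=1$ we would get $d_{j+1}(N)=d_j(N')+1\ge \text{(well, this needs the argument)}$ — more carefully, $d_{j}(N)=d_j(N')$ and $d_{j+1}(N)=d_{j+1}(N')+1$, but $d_{j+1}(N)\le d_j(N)=d_j(N')$ and $d_{j+1}(N')\le d_j(N')$, so $d_{j+1}(N')+1\le d_j(N')$ means $d_{j+1}(N)\le d_j(N')=d_j(N)$, consistent; the actual contradiction comes from looking one more step, so I'd instead argue directly that the multiset of summands of $N'$ is obtained from that of $N$ by deleting one copy of $R/\mf m^k$ (interpreting $R/\mf m^0$ as the zero module when $k=0$). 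Indeed $d_i(\phi)$ is the indicator of $i\le k$, which is precisely $d_i(R/\mf m^k)$, and since the decomposition of a f.g.\ module over a DVR into cyclic pieces is determined by $d_\bullet$, the cancellation $d_\bullet(N)=d_\bullet(N')+d_\bullet(R/\mf m^k)$ lifts (using uniqueness) to an isomorphism $N\cong N'\oplus R/\mf m^k$. The projection onto $N'$ is then the desired surjection, with kernel the cyclic module $R/\mf m^k$.

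The main obstacle is the passage from the numerical cancellation $d_\bullet(N)=d_\bullet(N')+d_\bullet(R/\mf m^k)$ to an honest splitting $N\cong N'\oplus R/\mf m^k$: since two f.g.\ modules over a DVR with the same $d_\bullet$ are isomorphic, I need to see that $N'\oplus R/\mf m^k$ has the same $d_\bullet$ as $N$, which is immediate because $d_\bullet$ is additive over direct sums. So the real content is just verifying that $d_i(\phi)=\mathbf 1_{i\le k}$ matches $d_i(R/\mf m^k)$ and invoking Proposition~\ref{p:DVR M0} — a short argument once the monotonicity bookkeeping in the forward direction is pinned down. I would present it in the order: (1) rephrase everything via $d_\bullet$; (2) forward direction via the $\mf m^i$-adic quotient surjections plus monotonicity to get $d_i(\phi)\in\{0,1\}$; (3) backward direction via additivity of $d_\bullet$ and uniqueness of the cyclic decomposition to produce the splitting and hence $\phi$. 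The only place a careful reader might want more detail is step (2)'s exclusion of $d_i(\phi)=-1$, so I would make sure to spell out there that the partial sums $d_{\le i}(N)-d_{\le i}(N')$ form a nondecreasing $\{0,1\}$-valued sequence, which is the crux.
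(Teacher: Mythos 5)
Both directions of your argument contain genuine errors, and the backward direction rests on a false structural claim.

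\textbf{Forward direction.} Your assertion that the kernel of $N/\mf m^iN\twoheadrightarrow N'/\mf m^iN'$ ``has dimension $0$ or $1$,'' and hence that $d_{\le i}(N)-d_{\le i}(N')\in\{0,1\}$, is false. That kernel is $\<x\>/(\<x\>\cap\mf m^iN)$, a \emph{cyclic} module, whose length can be as large as $i$ (take $N=R/\mf m^2$, $N'=0$: the difference of partial sums at $i=2$ is $2$). What is true, and what the paper proves, is that the partial-sum differences form a nondecreasing sequence whose successive \emph{increments} are at most $1$; the upper bound comes from showing that each graded piece $(\<x\>\cap\mf m^{i-1}N)/(\<x\>\cap\mf m^iN)$ has length at most $1$, using cyclicity of $\<x\>$ (if $\mf m^jx\subseteq\mf m^{i-1}N$ then $\mf m^{j+1}x\subseteq\mf m^iN$). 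Your related claim that $d_i(\phi)=d_i(K)$ for the kernel $K$ is also false.

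\textbf{Backward direction.} This is the fatal gap. You claim $d_\bullet(\phi)$ must be nonincreasing, i.e.\ of the form $1^k0^\infty=d_\bullet(R/\mf m^k)$, and hence that $N\cong N'\oplus R/\mf m^k$. Take $N=R/\mf m^2$ and $N'=R/\mf m$: then $d_\bullet(N)=(1,1,0,\dots)$, $d_\bullet(N')=(1,0,0,\dots)$, so $d_\bullet(\phi)=(0,1,0,\dots)$, which is not nonincreasing, and $N$ is \emph{not} isomorphic to $N'\oplus(\text{cyclic})$ even though the required surjection (quotient by $\mf m/\mf m^2$) exists. The surjection with cyclic kernel does not split in general, so additivity of $d_\bullet$ over direct sums together with Proposition~\ref{p:DVR M0} cannot produce it. The paper's sufficiency argument is necessarily more involved: it selects several cyclic summands $R/\mf m^i$ of $N$ indexed by the descents of the sequence $\delta_\bullet$, and takes $x=\sum_{k} t^{k-\delta_{\le k}}e_k$ to be a sum of elements of carefully chosen valuations spread across those summands, then verifies $N/\<x\>\cong N'$ directly. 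Your proposal handles only the split case and misses all the others.
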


We can also easily extract the $d_i(\phi)$.

\begin{corollary}\label{cor:DVR M1}
Let $\{e_\alpha\}$ be a minimal set of generators for an f.g.\ $R$-module $N$,
and suppose $e_\alpha$ generates a summand isomorphic to $R/\mf m^{\ell_\alpha}$, 
wherein $\ell_\alpha$ may be $\infty$.
Let $x = \sum x_\alpha e_\alpha$ be an element of~$N$, and $\phi$ the canonical map $N\to N/\<x\>$.
Then $d_\bullet(\phi)$ is the lexicographically least sequence $d_\bullet$ 
such that for every $\alpha$,
\begin{equation}\label{eq:DVR d}
\#\{i\leq\ell_\alpha : d_i=0\} \leq \dim_{R/\mf m}(\<e_\alpha\>/\<x_\alpha e_\alpha\>).
\end{equation}
\end{corollary}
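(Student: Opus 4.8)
The idea is to compute the partial sums $d_{\leq i}(\phi)$ exactly, and then extract the lexicographic characterization purely formally. By hypothesis $N=\bigoplus_\alpha\<e_\alpha\>$ with $\<e_\alpha\>\cong R/\mf m^{\ell_\alpha}$, and $\ker\phi=\<x\>$ is cyclic, so Proposition~\ref{p:DVR M1} already guarantees that $d_\bullet(\phi)$ is a $0$–$1$ sequence; note that, since $d_\bullet(\phi)$ is generally not nonincreasing, the sequences $d_\bullet$ competing for the minimum must be allowed to range over all sequences of nonnegative integers. Write $v_\alpha=\val(x_\alpha)$ (with $v_\alpha=\infty$ when $x_\alpha e_\alpha=0$), so that $s_\alpha:=\dim_{R/\mf m}(\<e_\alpha\>/\<x_\alpha e_\alpha\>)=\min(v_\alpha,\ell_\alpha)$.

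\emph{Step 1: the partial sums.} Since $d_{\leq j}(L)=\dim_{R/\mf m}(L/\mf m^jL)$ for any $L$, and $(N/\<x\>)/\mf m^i(N/\<x\>)=N/(\<x\>+\mf m^iN)$, one gets
\[
d_{\leq i}(\phi)=d_{\leq i}(N)-d_{\leq i}(N/\<x\>)=\dim_{R/\mf m}\!\big((\<x\>+\mf m^iN)/\mf m^iN\big)=\dim_{R/\mf m}\!\big(\<x\>/(\<x\>\cap\mf m^iN)\big).
\]
Reading off components in the direct sum, an element $rx\in\<x\>$ lies in $\mf m^iN=\bigoplus_\alpha\mf m^i\<e_\alpha\>$ exactly when $\val(r)+v_\alpha\geq\min(i,\ell_\alpha)$ for every $\alpha$, i.e.\ when $\val(r)\geq m_i$, where
\[
m_i:=\max_\alpha\big(\min(i,\ell_\alpha)-v_\alpha\big)^+=\max_\alpha\big(\min(i,\ell_\alpha)-s_\alpha\big)^+ .
\]
Hence $\<x\>\cap\mf m^iN=\mf m^{m_i}x$, and since $\mathrm{Ann}(x)=\mf m^{k}$ with $k=\max_\alpha(\ell_\alpha-v_\alpha)^+\geq m_i$, we obtain $\<x\>/(\<x\>\cap\mf m^iN)\cong R/\mf m^{m_i}$, so $d_{\leq i}(\phi)=m_i$ and $d_i(\phi)=m_i-m_{i-1}$. (That this lies in $\{0,1\}$ is a one-line check from $\min(i,\ell_\alpha)\leq\min(i-1,\ell_\alpha)+1$, reconfirming Proposition~\ref{p:DVR M1}.) The evident conventions $\min(i,\infty)=i$ handle free summands, and $x=0$ is the trivial case.

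\emph{Step 2: the lexicographic characterization.} First, $d_\bullet(\phi)$ is feasible: for each $\alpha$,
\[
\#\{i\leq\ell_\alpha: d_i(\phi)=0\}=\ell_\alpha-d_{\leq\ell_\alpha}(\phi)=\ell_\alpha-m_{\ell_\alpha}\leq\ell_\alpha-(\ell_\alpha-s_\alpha)=s_\alpha,
\]
using $m_{\ell_\alpha}\geq(\ell_\alpha-s_\alpha)^+=\ell_\alpha-s_\alpha$. Next, suppose $d'$ is any sequence of nonnegative integers satisfying all the inequalities with $d'<_{\mathrm{lex}}d_\bullet(\phi)$, and let $j$ be the first index with $d'_j\neq d_j(\phi)$. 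Then $d'_j<d_j(\phi)\leq1$ forces $d_j(\phi)=1$ and $d'_j=0$, with $d'$ agreeing with $d_\bullet(\phi)$ below $j$. From $d_j(\phi)=m_j-m_{j-1}=1$ choose $\beta$ attaining $m_j=(\min(j,\ell_\beta)-s_\beta)^+$; the strict increase forces $\ell_\beta\geq j$ (otherwise $\min(j,\ell_\beta)=\min(j-1,\ell_\beta)$) and hence $m_j=j-s_\beta$. Then
\[
\#\{i\leq j: d'_i=0\}=\#\{i<j: d_i(\phi)=0\}+1=(j-1-m_{j-1})+1=j-m_j+1=s_\beta+1,
\]
so $\#\{i\leq\ell_\beta: d'_i=0\}\geq s_\beta+1>s_\beta$, contradicting the $\beta$-inequality for $d'$. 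Thus $d_\bullet(\phi)$ is the lex-least feasible sequence.

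\emph{Main obstacle.} The computations in Step~1 are routine valuation bookkeeping in a direct sum of cyclic modules over a DVR. The one point requiring care is the argument in Step~2: it is essential that the constraints bound \emph{counts of zeros in prefixes}, so that a competitor cannot restore feasibility by inflating later entries — this is exactly why lexicographic minimality over all nonnegative-integer sequences (rather than over $0$–$1$, nonincreasing, or fixed-sum sequences) is the correct formulation, and why the contradiction above is localized in the prefix of length $j\leq\ell_\beta$.
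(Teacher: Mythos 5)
Your proof is correct, and it takes a genuinely different route from the paper's. The paper proves the corollary by leveraging the explicit generator $x$ constructed in the sufficiency half of Proposition~\ref{p:DVR M1}: it first handles the special position where the nonzero $x_\alpha$ are indexed so that both $\nu_\alpha$ and $\ell_\alpha-\nu_\alpha$ are strictly increasing (observing the $d_\bullet$ prescribed by \eqref{eq:DVR d} matches the $\delta$-sequence there), and then reduces a general $x$ to that shape by discarding ``dominated'' indices and performing changes of the generating set. Your argument bypasses that construction entirely: you compute the invariant $d_{\leq i}(\phi)$ in closed form as $m_i=\max_\alpha\bigl(\min(i,\ell_\alpha)-s_\alpha\bigr)^+$ by a direct valuation analysis of $\<x\>\cap\mf m^iN$ inside the direct sum, and then deduce feasibility and lexicographic minimality as purely combinatorial consequences of this formula. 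That is cleaner and more self-contained: it does not depend on the internals of Proposition~\ref{p:DVR M1}'s proof (only on the statement that $d_\bullet(\phi)$ is $\{0,1\}$-valued), and it yields the exact value of $d_{\leq i}(\phi)$ as a byproduct, which the paper's proof does not make explicit. It is also a one-line observation in your framework that the paper's phrase ``maximum of $\ell_\alpha$ and the $\mf m$-valuation'' should read ``minimum'', which you handled correctly. The only point I would tighten is the feasibility check for a summand with $\ell_\alpha=\infty$ and $x_\alpha\neq0$: your identity $\ell_\alpha-m_{\ell_\alpha}\leq s_\alpha$ is formally $\infty-\infty$ there, and should be replaced by the observation that for $i$ large $m_i=i-\min_\gamma v_\gamma$ (minimum over $\gamma$ with $\ell_\gamma=\infty$, $x_\gamma\neq0$), so the total zero-count $\lim_i(i-m_i)$ equals that minimum, which is $\leq v_\alpha=s_\alpha$; this is routine but worth saying rather than gesturing at conventions.
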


When $\ell_\alpha$ is finite, condition~\eqref{eq:DVR d} is equivalent to
\[d_{\leq \ell_\alpha} \geq \dim_{R/\mf m}(\<x_\alpha e_\alpha\>).\]

In the case that $N$ and $N'$ have finite length, Proposition~\ref{p:DVR M1}
follows from facts about the Hall algebra \cite{Macdonald}.
Indeed, it is equivalent that $N$ have finite length and that
$d_i(N)$ stabilize to 0 for $i\gg0$.  In this case $d_i$ is a partition,
and its conjugate partition is the one usually used to label $N$.  
For a cyclic module, this conjugate partition has a single row.
Then, under the specialization taking the Hall polynomials
to the Littlewood-Richardson coefficients, 
Proposition~\ref{p:DVR M1} is a consequence of the Pieri rule.
(Taking this further, our foundational Lemma~\ref{lem:Dedekind kernels}
is essentially the statement that all coefficients in the Pieri
rule are equal to~1.)

We include a proof of the proposition nonetheless, both because we do not require finite length
and because we reuse its framework in proving Corollary~\ref{cor:DVR M1}.

\begin{proof}
\noindent\emph{Necessity.}  Let $\<x\>$ be the cyclic kernel of $N\to N'$,
for $x\in N$.
The kernel of the induced surjection $N\otimes R/\mf m^n\to N'\otimes R/\mf m^n$
is 
\[K_n = \<x\>/(\<x\>\cap \mf m^nN).\]
The dimensions over $R/\mf m$ of these three modules are related by
\[d_{\leq n}(N) - d_{\leq n}(N') = \dim_{R/\mf m} K_n\]
and, by subtracting two such relations,
\[d_n(N) - d_n(N') = \dim_{R/\mf m} K_n - \dim_{R/\mf m} K_{n-1}.\]
It is clear by definition that the $K_n$ are an increasing sequence of modules,
so that $\dim_{R/\mf m} K_n - \dim_{R/\mf m} K_{n-1}$ is nonnegative.
On the other hand,
\[(\<x\>\cap \mf m^{n-1}N)/(\<x\>\cap \mf m^nN)\]
has length at most 1, since if $\mf m^ix\subseteq\mf m^{n-1}N$
then $\mf m^{i+1}x\subseteq\mf n^nN$.  
But this length is $\dim_{R/\mf m} K_n - \dim_{R/\mf m} K_{n-1}$,
which is thus at most~1.

\noindent\emph{Sufficiency.}  Given $N$ and an infinite list 
$\delta_i\in\{0,1\}$ such that $d_i(N)-\delta_i$ is also a nonincreasing 
sequence of naturals, equal therefore to $d_i(N')$ for a module $N'$,
we wish to construct $x\in N$ so that $N/\<x\>\cong N'$.

Let $I$ be the set of indices $i$ for which $\delta_i=1$ and $\delta_{i+1}=0$;
also include in~$I$ the symbol $\infty$ if $\delta_i=1$ for all sufficiently large~$i$.
For each $i\in I$, there is a summand isomorphic to $R/\mf m^i$ in~$N$.
Splitting off one module of each of these isomorphism classes, 
we can make the identification
\[N = \bigoplus_{i\in I} R/\mf m^i\oplus P\]
for some module~$P$, and let $e_i:i\in I$ be generators of the summands other than~$P$.
Let $t\in R$ be a generator of~$\mf m$, and define
\[x = \sum_{k\in I} t^{k-\delta_{\leq k}}\, e_k,\]
where as expected $\delta_{\leq k}$ means $\sum_{i=1}^k\delta_i$.  
(In fact the whole expression $k-\delta_{\leq k}$ must be interpreted as $\sum_{i=1}^k(1-\delta_i)$
if $k=\infty$.)

The module $P$ will remain as a summand in~$N/\<x\>$, and 
we may restrict attention to the remaining summand, call it $Q$.  
Towards describing it, define the elements 
\[\tilde e_i = \sum_{k\in I, k\geq i} t^{(k-\delta_{\leq k})-(i-\delta_{\leq i})}\, e_k\quad\in N.\]
Fix for the moment some $i\in I$.
Let $j=j(i)$ be the greatest index less than~$i$ such that 
$\delta_j=0$ and $\delta_{j+1}=1$, or if there is no such index let $j=0$.
Then we have 
\[t^j\tilde e_i = t^{\delta_{\leq j}}x.\]
This is because $j-\delta_{\leq j} = i-\delta_{\leq i}$ by the definition of~$i$,
so that the coefficients of $e_k$ agree for all $k\geq i$;
for $k<i$, however, we also have $k<j$ and thus 
$k-\delta_{\leq k}+\delta_{\leq j}\geq k$,
so that the coefficient in~$t^j\tilde e_i$ of~$e_k$ is zero.
Therefore, $t^j\tilde e_i$ equals zero in~$N/\<x\>$.

However, if some $R$-linear combination
$y = \sum_{i\in I} r_i\tilde e_i\in N$ is zero in $N/\<x\>$,
then $r_i\in\mf m^{j(i)}N$ for each $i$.  Otherwise, write $y=sx$. 
Let $i$ be minimal so that $r_i\not\in\mf m^{j(i)}N$, and let $j=j(i)$.
If $y$ is expanded in terms of the $e_k$, then the least $k$ such that
$e_k$ has a nonzero coefficient is $k=i$.  
Let $i'$ be the greatest element of~$I$ less than~$i$.
Since the coefficient of $e_{i'}$ in~$y$ is zero,
the $\mf m$-valuation of $s$ must be greater than or equal to
$i' - (i' - \delta_{\leq i'}) = \delta_{\leq i'} = \delta_{\leq j}$,
in view of the definition of~$x$.
(Or, if there is no element of~$I$ less than~$i$, then consideration of
the coefficient of $e_i$ in $x$ yields the same conclusion.)
But then the $\mf m$-valuation of the coefficient of $e_i$ in $y$ 
is greater than or equal to
$(i - \delta_{\leq i}) + \delta_{\leq j} = j$,
contradicting our assumption on~$i$.  

It follows that the $R$-module generated by the $\tilde e_i$ is isomorphic to
\[\bigoplus_{i\in I} R/\mf m^{j(i)},\]
wherein $\{j(i) : i\in I\}$ is the set of all indices $j$ for which
$\delta_j=0$ and $\delta_{j+1}=1$.  
The elements~$\tilde e_i$ in fact generate $Q$, 
by a triangularity argument between the $\tilde e_i$ and the~$e_i$.
We conclude that the sequences $d_i(N)-d_i(N')$ and $\delta_i$ are equal.
\end{proof}

\begin{proof}[Proof of Corollary~\ref{cor:DVR M1}]
Let $\nu_\alpha = \dim_{R/\mf m}(\<e_\alpha\>/\<x_\alpha e_\alpha\>)$;
this is the maximum of $\ell_\alpha$ and the $\mf m$-valuation of $x_\alpha$.
Suppose first that $x_\alpha=0$ for all $\alpha$ except for a single list
$A=\{\alpha_1,\ldots,\alpha_{|A|}\}$ such that both
$(\nu_{\alpha_i})$ and $(\ell_{\alpha_i} - \nu_{\alpha_i})$ are strictly increasing sequences.
To avoid preliferation of subscripts we will write $\nu_i\doteq\nu_{\alpha_i}$
and $\ell_i\doteq\ell_{\alpha_i}$.  

The condition~\eqref{eq:DVR d} is vacuous when $x_\alpha=0$.
The sequence $d_\bullet$ that we obtain from \eqref{eq:DVR d} for the $\alpha\in A$ is
\[0^{\nu_1}1^{\ell_1-\nu_1}
0^{\nu_2-\nu_1}1^{\ell_2-\nu_2-\ell_1+\nu_1}
0^{\nu_3-\nu_2}1^{\ell_3-\nu_3-\ell_2+\nu_2}\ldots,\]
exponents indicating repetition.  
For this sequence, if the sufficiency argument of Proposition~\ref{p:DVR M1}
is run with the same choice of generators $\{e_\alpha\}$, the element $x$
produced to generate the kernel is the same one we have chosen here
(up to automorphisms of the cyclic summands $\<e_\alpha\>$).  
So the corollary is proven in this case.

Now suppose two indices $\alpha$ and~$\alpha'$ are such that
\begin{equation}\label{eq:DVR d1}
\nu_\alpha\leq\nu_{\alpha'}
\quad\mbox{and}\quad
\ell_\alpha-\nu_\alpha\geq\ell_{\alpha'}-\nu_{\alpha'}.
\end{equation}
The inequality \eqref{eq:DVR d} holds if and only if the $(\nu_\alpha+1)$\/th 0
of $d_\bullet$, if any, follows at least $\ell_\alpha-\nu_\alpha$ 1s.
Hence, \eqref{eq:DVR d} for~$\alpha'$ is implied by \eqref{eq:DVR d} for~$\alpha$,
and thus $\alpha'$ is irrelevant for computing $d_\bullet$.  
Moreover, inequalities \eqref{eq:DVR d1}
ensure that we may change our basis for $N$ by adding a multiple
of $e_\alpha'$ to $e_\alpha$, 
yielding another generator $\tilde e_\alpha$ of~$\<e_\alpha\>$,
so that 
\[x_\alpha e_\alpha + x_{\alpha'} e_{\alpha'} = \tilde x_\alpha\tilde e_\alpha\]
for some $\tilde x_\alpha$ with the same $\mf m$-valuation as $x_\alpha$.

By repeatedly making such changes of basis, we may, with no changes to
the sequence $d_\bullet$ that will be computed, assume that $x_\alpha=0$
for all $\alpha$ except for a set no two of whose members $\alpha$, $\alpha'$
satisfy \eqref{eq:DVR d1}.  But such a set may be ordered
so that $(\nu_{\alpha_i})$ and $(\ell_{\alpha_i} - \nu_{\alpha_i})$ 
are both strictly increasing, and this reduces to the first case.
\end{proof}

Having control over condition~(M1), we turn to the axiom~(M).

\begin{proposition}\label{p:DVR M2}
Assume that the residue field of $R$ has order greater than~$2$.
Let $M(\emptyset)$, $M(1)$, $M(2)$, and $M(12)$ be f.g.\ $R$-modules.
There exist four surjections with cyclic kernels forming a pushout square
\[\xymatrix{
\ar@{}[dr]|{\mbox{\LARGE$\lrcorner$}}
M(\emptyset)\ar^\phi[r]\ar_\psi[d] & M(1)\ar^{\psi'}[d] \\
M(2)\ar_{\phi'}[r] & M(12)
}\]
if and only if
\begin{enumerate}
\item[(L1)] the source and target of each map satisfy Proposition~\ref{p:DVR M1};
\item[(L2a)]
 for each $n\geq1$, 
\[d_{\leq n}(M(\emptyset)) - d_{\leq n}(M(1)) - d_{\leq n}(M(2)) + d_{\leq n}(M(12)) \geq 0;\]
\item[(L2b)]
for any $n\geq1$ such that $d_n(M(1))\neq d_n(M(2))$, equality holds above:
\[d_{\leq n}(M(\emptyset)) - d_{\leq n}(M(1)) - d_{\leq n}(M(2)) + d_{\leq n}(M(12)) = 0.\]
\end{enumerate}
\end{proposition}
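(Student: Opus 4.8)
The plan is to translate the question into the choice of two cyclic submodules of $M(\emptyset)$. A pushout square all of whose maps are surjections with cyclic kernel is the same datum as a pair of cyclic submodules $K=\langle x\rangle$ and $L=\langle y\rangle$ of $N:=M(\emptyset)$ with $M(1)\cong N/\langle x\rangle$, $M(2)\cong N/\langle y\rangle$ and $M(12)\cong N/\langle x,y\rangle$: the pushout of two surjections $N/K\gets N\to N/L$ is $N/(K+L)$, and the two remaining maps automatically have cyclic kernels $\langle y\rangle/(\langle x\rangle\cap\langle y\rangle)$ and $\langle x\rangle/(\langle x\rangle\cap\langle y\rangle)$. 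Note also that, by Lemma~\ref{lem:Dedekind kernels}, the isomorphism types of $\langle x\rangle$ and $\langle y\rangle$ are already forced by $N$ together with $M(1)$, respectively $M(2)$. So the content of the two implications is: extract the numerical constraints (L1), (L2a), (L2b) from a given pushout; and conversely construct suitable $x$ and $y$.

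For necessity, (L1) is immediate from Proposition~\ref{p:DVR M1} applied to each of the four maps. For (L2a) and (L2b), apply the right-exact functor $\mbox{---}\otimes_R R/\mf m^n$ to the square; it is carried to a pushout square of modules over the Artinian local ring $R/\mf m^n$ whose maps are again surjections with cyclic kernel. Writing $\bar N=N\otimes R/\mf m^n$ and letting $\bar K,\bar L\subseteq\bar N$ be the images of $\langle x\rangle,\langle y\rangle$, the three quotient modules become $\bar N/\bar K$, $\bar N/\bar L$, $\bar N/(\bar K+\bar L)$; since $\dim_{R/\mf m}(P\otimes R/\mf m^n)=d_{\leq n}(P)$ for any f.g.\ $R$-module $P$, the left-hand side of (L2a) equals $\dim_{R/\mf m}\bar K+\dim_{R/\mf m}\bar L-\dim_{R/\mf m}(\bar K+\bar L)=\dim_{R/\mf m}(\bar K\cap\bar L)\geq 0$, which is (L2a). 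For (L2b), assume $\bar K\cap\bar L\neq 0$; then $\bar K$ and $\bar L$ are nonzero cyclic modules over $R/\mf m^n$, hence uniserial with one-dimensional socle, and from $\mathrm{soc}(\bar K\cap\bar L)=\mathrm{soc}(\bar K)\cap\mathrm{soc}(\bar L)\neq 0$ we get $\mathrm{soc}(\bar K)=\mathrm{soc}(\bar L)$. Since $\mf m^{n-1}\bar N$ is annihilated by $\mf m$, one computes $d_n(M(\emptyset))-d_n(M(1))=\dim_{R/\mf m}(\mf m^{n-1}\bar N\cap\bar K)$, which is $1$ if $\mathrm{soc}(\bar K)\subseteq\mf m^{n-1}\bar N$ and $0$ otherwise, and likewise for $M(2)$ with $\bar L$. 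As $\mathrm{soc}(\bar K)=\mathrm{soc}(\bar L)$ these agree, so $d_n(M(1))=d_n(M(2))$; this is the contrapositive of (L2b).

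For sufficiency I would imitate, for two elements simultaneously, the explicit construction in the sufficiency half of Proposition~\ref{p:DVR M1} together with its refinement in Corollary~\ref{cor:DVR M1}. Fix a decomposition $N=\bigoplus_\alpha R/\mf m^{\ell_\alpha}$ into cyclic summands, allowing $\ell_\alpha=\infty$, with chosen generators $e_\alpha$, and produce explicit elements $x=\sum x_\alpha e_\alpha$ and $y=\sum y_\alpha e_\alpha$. The sequences $\delta^{(1)}_\bullet=d_\bullet(M(\emptyset))-d_\bullet(M(1))$ and $\delta^{(2)}_\bullet=d_\bullet(M(\emptyset))-d_\bullet(M(2))$ (the hypothesis (L1) guaranteeing they take values in $\{0,1\}$ and that $d_\bullet(M(\emptyset))$ dominates both) prescribe the shapes that $x$ and $y$ must realize in the sense of Corollary~\ref{cor:DVR M1}. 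The crux is to allot the summands of $N$ to three roles --- used by $x$ alone, by $y$ alone, or by both --- so that $x$ realizes $\delta^{(1)}$, $y$ realizes $\delta^{(2)}$, the size of the ``shared'' part at each level $n$ is the quantity $c_n:=d_{\leq n}(M(\emptyset))-d_{\leq n}(M(1))-d_{\leq n}(M(2))+d_{\leq n}(M(12))$ made available by (L2a), and, crucially, wherever $d_n(M(1))\neq d_n(M(2))$ the relevant generators of $x$ and of $y$ are taken disjoint --- which is exactly what (L2b) permits, since there $c_n=0$. One then reads off $N/\langle x\rangle\cong M(1)$ and $N/\langle y\rangle\cong M(2)$ from Corollary~\ref{cor:DVR M1}, and $N/\langle x,y\rangle\cong M(12)$ either by the triangularity argument of Proposition~\ref{p:DVR M1} applied to the $e_\alpha$, or by applying Corollary~\ref{cor:DVR M1} a second time to the image of $y$ in $N/\langle x\rangle$.

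The main obstacle is exactly this last point of sufficiency: unlike axiom~(M1), the module $N/\langle x,y\rangle$ depends on the joint position of $x$ and $y$ and not merely on each of them separately, so the construction must be arranged so that $x$ and $y$ interact on the shared summands precisely as dictated by $c_\bullet$. I expect this to call for either a two-element strengthening of Corollary~\ref{cor:DVR M1} or a careful change-of-basis argument extending the one used there, together with checking along the way the required compatibilities among the four sequences $d_\bullet(M(\cdot))$. By contrast the necessity direction is short once one base-changes to $R/\mf m^n$.
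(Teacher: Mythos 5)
Your necessity argument is correct and, for (L2b), is a genuine and somewhat cleaner alternative to the paper's: the paper argues by an explicit manipulation of elements $r,s,p,q$ with $sy-rx\in\mf m^nN$, whereas you base-change to $R/\mf m^n$, identify the alternating sum $c_n$ with $\dim_{R/\mf m}(\bar K\cap\bar L)$, and deduce $d_n(M(1))=d_n(M(2))$ from the fact that nonzero cyclic modules over $R/\mf m^n$ are uniserial with simple socle. Your identification of a pushout square of cyclic-kernel surjections with a pair of cyclic submodules $\<x\>,\<y\>\subseteq M(\emptyset)$ is also the right framing, and the resulting reduction of sufficiency to producing $x$ and $y$ with $N/\<x\>\cong M(1)$, $N/\<y\>\cong M(2)$ and $\dim_{R/\mf m}(\bar K\cap\bar L)=c_n$ in $N\otimes R/\mf m^n$ for all $n$ (which suffices by Proposition~\ref{p:DVR M0}) is a useful observation the paper does not state in this form.

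However, the sufficiency direction is not proved: you describe a strategy and then explicitly defer its execution (``I would imitate\dots'', ``I expect this to call for\dots''). The missing content is precisely what occupies the bulk of the paper's proof. Concretely, the paper chooses shared generators $e_i$ indexed by $I(\phi)\cup I(\psi)$ and private generators $\varepsilon_i$ indexed by $I(\phi')\cap I(\psi')$, sets $x=\sum_{i\in I(\phi)}t^{i-d_{\leq i}(\phi)}e_i$ and $y=\sum_{i\in I(\psi)}t^{i-d_{\leq i}(\psi)}e_i+\sum_{i\in I(\psi')\setminus(I(\psi)\setminus I(\phi))}t^{i-d_{\leq i}(\psi')}\varepsilon_i$, and then verifies $M(1)/\<\phi(y)\>\cong M(12)$ by rewriting $\phi(y)$ in the generators $\tilde e_i$ of $M(1)$ and checking, term by term, that the inequalities of Corollary~\ref{cor:DVR M1} forced by each term are satisfied by the target sequence $d_\bullet(M(1))-d_\bullet(M(12))$ --- and it is exactly here that (L2a) and (L2b) are invoked repeatedly (for instance to show that every index in $I(\psi)\setminus I(\psi')$ lies in $I(\phi)$, and to control the runs of indices where $d_j(\phi)=d_j(\psi)$). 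None of this is routine, and your proposal supplies neither the exponents and index sets defining $x$ and $y$ nor the verification of the joint quotient, which you yourself flag as the main obstacle. Until that construction and check are carried out, the ``if'' direction of the proposition remains open in your write-up.
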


The numbering of these conditions is chosen to agree with the
numbering of the axioms for a quasi-arithmetic matroid in
Corollary~\ref{c:arithmetic}.


Condition (L2a) asserts that $A \mapsto -d_{\leq n}(M(A))$
is a \emph{submodular} function.

\begin{proof} 
\noindent\emph{Necessity.}  Condition~(L1) is clear from the fact that
axiom~(M) implies condition~(M1).  

Tensoring the matroid $M$ with
with $R/\mf m^n$ gives a matroid $M' \doteq M\otimes(R/\mf m^n)$ over that ring.
All of its modules are of finite length.  
Now regard these modules $M'(A)$ as $R/\mf m$-vector spaces.
The maps $M'(A)\to M'(Ab)$ given by (M1) remain surjective,
and the pushout diagrams in (M) remain pushouts, since 
surjectivity and pushout-hood
can be checked set-theoretically.  Accordingly,
$M'$ can be interpreted as a polymatroid over $R/\mf m$, that is,
a classical polymatroid.  
The negative of the corank function of a polymatroid is submodular,
and this is Condition~(L2a).

As for condition~(L2b), 
suppose that the inequality of~(L2a) were strict.
Let the kernel of~$\phi$ be $\<x\>$, and the kernel of~$\psi$ be $\<y\>$,
so that the kernel of the composite $\psi'\circ\phi=\phi'\circ\psi$ is $\<x,y\>$.
So our assumption is
\[\dim \<x\>/(\<x\>\cap\mf m^nN) + \dim \<y\>/(\<y\>\cap\mf m^nN) >
\dim \<x,y\>/(\<x,y\>\cap\mf m^nN)  \]
where all dimensions are over $R/\mf m$.  (Note that the non-strict version
of this inequality manifestly holds, providing another proof of~(L2a).)
That is, there exist $r,s\in R$ such that $sy-rx\in\mf m^nN$,
but neither $rx$ nor $sy$ is in $\mf m^nN$.

Now, suppose that $d_n(M(\emptyset))-d_n(M(1))=1$.  
By the proof of Proposition~\ref{p:DVR M1},
the module $(\<x\>\cap\mf m^{n-1}N)/(\<x\>\cap\mf m^nN)$
is nontrivial, i.e.\ there exists $q\in R$ so that 
\[qx\in\mf m^{n-1}N\setminus\mf m^nN.\]
Because $qx\in\mf m^{n-1}N$ and $rx\not\in\mf m^nN$, we have that $r$ divides
$q$ in~$R$, say $r=pq$.  Then 
\[psy-qx = p(sy-rx)\in\mf m^nN\]
and by adding, we get $psy\in\mf m^{n-1}N\setminus\mf m^nN$,
which implies that $d_n(M(\emptyset))-d_n(M(2))=1$.
Of course the same holds with the roles of $1$ and $2$ in the ground set reversed,
so that $d_n(M(1)) = d_n(M(2))$.  By contradiction, (L2b) is proved.

\noindent\emph{Sufficiency.}
Suppose the modules $M(A)$ satisfy (L1), (L2a), (L2b).  
For each $A\subseteq\{1,2\}$, write 
\[M(A) = \bigoplus_{\alpha=1}^s R/\mf m^{\ell^A(\alpha)},\]
where the $\ell^A(\alpha)\in\mathbb N\cap\{\infty\}$ form a non-increasing sequence.
Let $\overline d_{\leq\ell^A(\alpha)}(\phi)$ denote $\ell^A(\alpha) - d_{\leq\ell^A(\alpha)}(\phi)$.
We will abbreviate these notation by omitting the braces and commas from the superscript when $A$ is a particular set,
and in particular use $\ell$ in place of $\ell^\emptyset$.

Let $e_\alpha$ be a generator for the summand $R/\mf m^{\ell(\alpha)}$ of~$M(\emptyset)$.
Let $t\in R$ be a generator of~$\mf m$ and
define the element $x\in M(\emptyset)$ by 
\begin{equation}\label{eq:DVR M2 x}
x = \sum_\alpha t^{\overline d_{\leq\ell(\alpha)}(\phi)} e_\alpha.
\end{equation}
By assumption (L1), if $f$ is one of the maps in the pushout,
the sequence $d_\bullet(f)$ has elements drawn from $\{0,1\}$,
so by the proof of Proposition~\ref{p:DVR M1}, this arranges that $M(\emptyset)/\<x\>\cong M(1)$.
Also as in that proof, define the elements $\tilde e_\alpha\in M(\emptyset)$ by
\[\tilde e_\alpha = \sum_{\beta=1}^\alpha t^{\overline d_{\leq\ell(\beta)}(\phi) - \overline d_{\leq\ell(\alpha)}(\phi)} e_\beta.\]
Then the images $f_\alpha$ of the $\tilde e_\alpha$ in $M(\emptyset)/\<x\>$ are a minimal set of generators,
except that the last one, $f_s$, may be zero.

If we construct a linear combination of the $f_\alpha$
using any family of coefficients $\tilde y_\alpha\in R$ such that 
$\val\tilde y_\alpha$ is equal to $\overline d_{\leq\ell^1(\alpha)}(\psi')$,
where $\val:R\to\mathbb N$ is the valuation of~$R$,
then the quotient $\big(M(\emptyset)/\<x\>\big)/\<\sum_\alpha y_\alpha f_\alpha\>$ by this linear combination
will be isomorphic to $M(12)$.
So it is enough for us to choose the $\tilde y_\alpha$ to arrange that
$y=\sum_\alpha \tilde y_\alpha\tilde e_\alpha$ satisfies $M(\emptyset)/\<y\>\cong M(2)$.

Expanding, we have
\[
y = \sum_\alpha \tilde y_\alpha\tilde e_\alpha
= \sum_\beta\left(\sum_{\alpha\geq\beta} t^{\overline d_{\leq\ell(\beta)}(\phi) - \overline d_{\leq\ell(\alpha)}(\phi)}\,\tilde y_\alpha\right) e_\beta.\]
Assign the name $y_\beta$ to the inner sum, so that $y = \sum_\beta y_\beta e_\beta$.  Then
\begin{align}
y_\beta &= \sum_{\alpha\geq\beta} t^{\overline d_{\leq\ell(\beta)}(\phi) - \overline d_{\leq\ell(\alpha)}(\phi)}\,\tilde y_\alpha \notag
\\&= \tilde y_\beta + t^{\overline d_{\leq\ell(\beta+1)}(\phi) - \overline d_{\leq\ell(\beta)}(\phi)}
\sum_{\alpha\geq\beta+1} t^{\overline d_{\leq\ell(\beta+1)}(\phi) - \overline d_{\leq\ell(\alpha)}(\phi)}\,\tilde y_\alpha \notag
\\&= \tilde y_\beta + t^{\overline d_{\leq\ell(\beta+1)}(\phi) - \overline d_{\leq\ell(\beta)}(\phi)} y_{\beta+1}. \label{eq:5.4suff1}
\end{align}
Let us momentarily name the latter summand $z_\beta := t^{\overline d_{\leq\ell(\beta+1)}(\phi) - \overline d_{\leq\ell(\beta)}(\phi)} y_{\beta+1}$.
Again by Proposition~\ref{p:DVR M1}, to achieve $M(\emptyset)/\<y\>\cong M(2)$
it is enough to arrange that $\val y_\beta = \overline d_{\leq\ell(\beta)}(\psi)$.
By choosing the $\tilde y_\beta$ one at a time in order of decreasing index,
we can consider the instance of the above equation for each value of $\beta$ separately.
A choice of $\tilde y_\beta$ will be possible just if 
the desired value of the triple
$(\val y_\beta, \val\tilde y_\beta, \val z_\beta)$ lies in the set
\[\{(\val(a+b),\val a,\val b) : a,b\in R\}\]
because, given such elements $a$ and $b$, we get $z_\beta = kb$ for some $k\in R\setminus\mf m$,
and then we make the choice $\tilde y_\beta = ka$.
It is easy to see that the above set is
\[\{(v_1,v_2,v_3)\in\mathbb N^3 : \mbox{the minimum of $\{v_1,v_2,v_3\}$ occurs at least twice}\}.\]
The inclusion left to right is the additive axiom for nonarchimedean valuations.
Right to left, the key fact is that the sum of two elements of $R\setminus\mf m$
can have any valuation; this uses $|R/\mf m|>2$ to realise the tuple $(0,0,0)$
(and more generally $(v,v,v)$).

As just stated, the desired valuation of the left hand side of~\eqref{eq:5.4suff1} 
is $\val y_\beta = \overline d_{\leq\ell(\beta)}(\psi)$,
while those of the two summands on the right hand side are respectively
$\val\tilde y_\beta = \overline d_{\leq\ell^1(\beta)}(\psi')$
and
$\val z_\beta = \overline d_{\leq\ell(\beta+1)}(\phi) - \overline d_{\leq\ell(\beta)}(\phi) + \overline d_{\leq\ell(\beta+1)}(\psi)$.
So what we must show is that 
\begin{equation}\label{eq:DVR M2 1}
\overline d_{\leq\ell^1(\beta)}(\psi') \geq
\min\{\overline d_{\leq\ell(\beta)}(\psi),\, 
\overline d_{\leq\ell(\beta+1)}(\phi) - \overline d_{\leq\ell(\beta)}(\phi) + \overline d_{\leq\ell(\beta+1)}(\psi)\}
\end{equation}
and that equality holds if the two terms of the minimum are different.

To prove the inequality, we consider two cases according to how $\ell^1(\beta)$ compares to $\ell^2(\beta)$. 
If $\ell^2(\beta)\leq\ell^1(\beta)$, then $d_n(\psi)=1$ for all $\ell^1(\beta)<n\leq\ell(\beta)$,
which implies that
$\overline d_{\leq\ell(\beta)}(\psi) = \ell(\beta) - d_{\leq\ell(\beta)}(\psi)$ is equal to
$\ell^1(\beta) - d_{\leq\ell^1(\beta)}(\psi) = \overline d_{\leq\ell^1(\beta)}(\psi)$.
Assumption (L2a) says that
\[d_{\leq n}(\psi) = d_{\leq n}(M(\emptyset))-d_{\leq n}(M(2)) \geq 
d_{\leq n}(M(1))-d_{\leq n}(M(12)) = d_{\leq n}(\psi'),\]
or equivalently
\begin{equation}\label{eq:DVR M2 2}
\overline d_{\leq n}(\psi) \leq \overline d_{\leq n}(\psi'),
\end{equation}
and at $n=\ell^1(\beta)$ this now proves that 
the first term in the minimum in~\eqref{eq:DVR M2 1} is a lower bound for $\overline d_{\leq\ell^1(\beta)}(\psi')$.
The other case is that $\ell^2(\beta)\geq\ell^1(\beta)$.
In this case, $\overline d_{\leq\ell^1(\beta)}(\psi)$ is equal to
\[\ell(\beta+1) - \ell^1(\beta) + \overline d_{\leq\ell(\beta+1)}(\psi) = 
\overline d_{\leq\ell(\beta+1)}(\phi) - \overline d_{\leq\ell^1(\beta)}(\phi)+\overline d_{\leq\ell(\beta+1)}(\psi),\]
in which the first two terms account for the indices $\ell(\beta+1)<n\leq\ell^1(\beta)$ for which $d_n(\psi)=0$,
which is true because $\ell^1(\beta)\leq\ell^2(\beta)$.
We have as well $\overline d_{\leq\ell^1(\beta)}(\phi) = \overline d_{\leq\ell(\beta)}(\phi)$, 
since $d_n(\phi)=1$ in this range.  
Therefore, again using~\eqref{eq:DVR M2 2} at $n=\ell^1(\beta)$ we prove that 
the second term on the right of~\eqref{eq:DVR M2 1} is a lower bound for the left.

We have included $\ell^1(\beta)=\ell^2(\beta)$ in both of the above cases,
so in this event the arguments above show that both of the terms of the minimum in~\eqref{eq:DVR M2 1}
are equal to~$\overline d_{\leq\ell^1(\beta)}(\psi)$, and thus they equal each other.
So by contraposition, if the two terms of the minimum are not equal,
then $\ell^1(\beta)\neq\ell^2(\beta)$.
This implies that $d_n(M(1))\neq d_n(M(2))$ 
either for $n=\ell^1(\beta)$ (if $\ell^1(\beta)>\ell^2(\beta)$)
or for $n=\ell^1(\beta)+1$ (if $\ell^1(\beta)<\ell^2(\beta)$).
In either case we can invoke (L2b) in place of the respective invocation of (L2a) in the preceding paragraph,
proving that the inequality we showed there is in fact an equality, as needed.
\end{proof}

When the residue field is of order~2, some further possibilities for the data $d_\bullet(M(A))$ are ruled out.

\begin{propchar2}\label{p:DVR M2 char 2}
Assume that the residue field of $R$ has order~$2$.
Let $M(\emptyset)$, $M(1)$, $M(2)$, and $M(12)$ be f.g.\ $R$-modules.
There exist four surjections with cyclic kernels forming a pushout square
\[\xymatrix{
\ar@{}[dr]|{\mbox{\LARGE$\lrcorner$}}
M(\emptyset)\ar^\phi[r]\ar_\psi[d] & M(1)\ar^{\psi'}[d] \\
M(2)\ar_{\phi'}[r] & M(12)
}\]
if and only if conditions (L1), (L2a) and (L2b) of Proposition~\ref{p:DVR M2} hold, together with
\begin{enumerate}
\item[(L2c)]
if
\[d_{\leq n}(M(\emptyset)) - d_{\leq n}(M(1)) - d_{\leq n}(M(2)) + d_{\leq n}(M(12)) = 0\]
holds for some $n=n_0\geq1$, then either it also holds for at least one of $n=n_0-1$ and $n=n_0+1$,
or $M(1)$ has multiple summands isomorphic to $R/\mf m^n$.
\end{enumerate}
\end{propchar2}

\begin{proof}
Given the four modules, conditions (L1), (L2a), (L2b)
can still be proved using the proof of~Proposition~\ref{p:DVR M2}.
Therefore we may assume these conditions attain, and use (L1) to construct the elements
in the sufficiency half of that proof.
Up to isomorphism of $M(\emptyset)$, the only choice of an element $x\in M(\emptyset)$
such that $M(\emptyset)/\<x\>\cong M(1)$ is the one defined above in~\eqref{eq:DVR M2 x},
so we may fix this choice of~$x$ and follow that analysis to ascertain when a choice of~$y$ exists.
Conversely, to adapt the proof of~Proposition~\ref{p:DVR M2} to prove
that our stronger conditions are sufficient for the existence of our morphisms,
what we must show is that they avoid the potential problem pointed out above
where the assumption on $|R/\mathfrak m|$ was invoked.

If $|R/\mf m|=2$, we have
\begin{multline*}
\{(\val(a+b),\val a,\val b) : a,b\in R\} \\=
\{(v_1,v_2,v_3)\in\mathbb N^3 : \mbox{the minimum of $\{v_1,v_2,v_3\}$ occurs \emph{exactly} twice}\}.
\end{multline*}
Another difference will be that here, unlike in the proof of~Proposition~\ref{p:DVR M2},
it will not be enough to assume that $\val\tilde y_\alpha$ equals $\overline d_{\leq\ell^1(\alpha)}(\psi')$,
or that $\val\tilde y_\alpha$ equals $\overline d_{\leq\ell(\alpha)}(\psi)$, for all $\alpha$.
Recall that these assumptions were stronger than the necessary and sufficient condition in Corollary~\ref{cor:DVR M1}.

So what our task reduces to is proving that, supposing (L1), (L2a), (L2b) hold,
(L2c) is equivalent to the assertion that 
sequences of elements $v(\beta),v'(\beta)\in\mathbb N\cup\{\infty\}$ can be chosen
to meet the requirements of Corollary~\ref{cor:DVR M1} and the corresponding strengthening of equation~\eqref{eq:DVR M2 1}.
To spell these conditions out, the former is that 
$v(\beta)\geq \overline d_{\leq\ell(\beta)}(\psi)$ 
(respectively, $v'(\beta)\geq \overline d_{\leq\ell^1(\beta)}(\psi')$) 
for each $\beta$,
and that equality holds for at least one of the values $\beta = \alpha, \alpha+1, \ldots, \alpha'$
whenever this is a maximal consecutive sequence of $\beta$ for which the $d_{\leq\ell(\beta)}(\psi)$ are equal,
except that this condition is not imposed when 
$\ell(\alpha)=\cdots=\ell(\alpha')$ and $d_{\ell(\alpha)}(\psi) = 0$
(respectively, the same is imposed for $\ell^1$ and $\psi'$).
The latter is that
\begin{equation}\label{eq:DVR M2 char 2 1}
v'(\beta) \geq
\min\{v(\beta), \, 
\overline d_{\leq\ell(\beta+1)}(\phi) - \overline d_{\leq\ell(\beta)}(\phi) + v(\beta+1)\}
\end{equation}
for each $\beta$, with equality if and only if the two terms of the minimum are not equal.

Suppose that (L2c) is violated at~$n$.  
This implies that $d_n(\psi)=0$ and $d_n(\psi')=1$ while $d_{n+1}(\psi)=1$ and $d_{n+1}(\psi')=0$.
It follows that $n=\ell^1(\beta)$ for some $\beta$.  Let $\alpha$ and $\alpha'$ be as in the last paragraph.
Since $d_{n+1}(\psi') = d_{n+1}(\phi') = 0$
we have $\ell^1(\beta) = \ell^2(\beta) = n$ for all $\alpha\leq\beta\leq\alpha'$,
so by the last clause of~(L2c) we have $\alpha=\alpha'$
(because $d_n(\psi') = d_n(\phi') = 1$ we are not in the case where this last clause is inapplicable).
Corollary~\ref{cor:DVR M1} now demands that $v(\alpha)$ equals $\overline d_{\leq\ell^1(\alpha)}(\psi')$.
This implies
\[\overline d_{\leq\ell^1(\alpha)}(\psi')=v'(\alpha)\geq v(\alpha)
\geq\overline d_{\leq\ell(\alpha)}(\psi) = \overline d_{\leq\ell^1(\alpha)}(\psi) 
=\overline d_{\leq\ell^1(\alpha)}(\psi'),\]
in which the first inequality is from \eqref{eq:DVR M2 char 2 1}, 
and the last equality is the modularity assumption from~(L2c).
We thus get $v'(\alpha)=v(\alpha)$.
A parallel chain of inequalities shows that 
$v'(\alpha)=\overline d_{\leq\ell(\alpha+1)}(\phi) - \overline d_{\leq\ell(\alpha)}(\phi) + v(\alpha+1)$.
But this now violates the proviso on equality after \eqref{eq:DVR M2 char 2 1}.

If (L2c) is not violated for any~$n$, we will construct the sequences $v(\beta)$ and $v'(\beta)$.
Our choice will be $v(\beta)=\overline d_{\leq\ell(\beta)}(\psi)$ 
and $v'(\beta)=\overline d_{\leq\ell^1(\beta)}(\psi')$
except in certain situations where this choice would violate the proviso after~\eqref{eq:DVR M2 char 2 1}.
To wit, the exceptions are the following four situations
where $\ell^1(\beta)=\ell^2(\beta)=:n$ and the modularity equality holds for the quantities $d_{\leq n}$.
\begin{enumerate}
\item If $\ell^1(\beta)=\ell^2(\beta)=n$ for all $\beta$ in a range $\alpha\leq\beta\leq\alpha'$
and $\alpha<\alpha'$ strictly, then set $v(\alpha)=\infty$ and
$v'(\alpha+2)=\cdots=v'(\alpha')=\infty$.\\
If this does not attain, then:
\item if $d_n(\psi)=d_{n+1}(\psi)=0$, set $v(\beta)=\infty$;
\item if $d_n(\psi)=d_{n+1}(\psi)=1$, set $v(\beta+1)=\infty$;
\item if $d_n(\psi')=d_{n+1}(\psi')$, set $v'(\beta)=\infty$.
\end{enumerate}
This covers every case allowed for by~(L2c), 
so what remains to be checked is that the requirements of Corollary~\ref{cor:DVR M1} are met.
Item (1) is not a problem for Corollary~\ref{cor:DVR M1} because
$v(\alpha+1)=\overline d_{\leq\ell(\alpha+1)}(\psi)$ 
and $v'(\alpha)=\overline d_{\leq\ell^1(\alpha)}(\psi)$ 
have not been altered.
Items (2) through~(4) meet the requirements because there exists some $\gamma$ 
in the range $\alpha\leq\gamma\leq\alpha'$ discussed above such that
$d_{\ell(\gamma)}(\psi)=0$ and $d_{\ell(\gamma)+1}(\psi)=1$ (or the same for $\psi'$),
and $v(\gamma)$ (resp.\ $v'(\gamma)$) has not been altered.
\end{proof}

\begin{exchar2}
Let $R = \mathbb Z_2$, the localisation of the integers at~$2$.
The simplest example of four modules which do not form an $R$-matroid
despite satisfying the conditions of Proposition~\ref{p:DVR M2} are 
$M(\emptyset) = R\oplus R/\<2\>$,
$M(1) = R/\<4\>$,
$M(2) = R/\<4\>$, and
$M(12) = R/\<2\>$.
These violate condition~(L2c) because the unique positive integer for which the modularity equality
\[d_{\leq n}(M(\emptyset)) - d_{\leq n}(M(1)) - d_{\leq n}(M(2)) + d_{\leq n}(M(12)) = 0\]
holds is $n=2$, and $M(1)$ has only a single summand $R/\< 2^2\>$.  
The reader is invited to check that two suitable elements $x,y\in M(\emptyset)$ cannot be constructed.
On the other hand, taking the direct sum of each of these modules with $R/\< 4\>$
does yield an $R$-matroid:
putting $N = R\oplus R/\< 4\>\oplus R/\<2\>$
and taking $x=(2,0,1), y=(2,2,1)$ in~$N$ gives
$N/\<x\>\cong N/\<y\>\cong (R/\<4\>)^2$ and
$N/\<x,y\>\cong R/\<4\>\oplus R/\<2\>$.

In the same way,
$M(\emptyset) = \mathbb Z\oplus \mathbb Z/\<2\>$,
$M(1) = \mathbb Z/\<4\>$,
$M(2) = \mathbb Z/\<4\>$, and
$M(12) = \mathbb Z/\<2\>$
give a non-example of a matroid over $\mathbb Z$ on two elements,
which becomes a matroid on direct sum with $\mathbb Z/\<4\>$.
\end{exchar2}

By the time we come to three-element matroids over~$R$, 
there are already nontrivial conditions on the
functions $d_{\leq n}$ beyond their negatives being submodular.

\begin{proposition}\label{p:DVR 3}
Let $M$ be a matroid over~$R$ on the ground set~$[3]$,
and let $n$ be a natural or $\infty$.  Then, among the three quantities
\[
d_{\leq n}(M(1)) + d_{\leq n}(M(23)),
d_{\leq n}(M(2)) + d_{\leq n}(M(13)),
d_{\leq n}(M(3)) + d_{\leq n}(M(12)),
\]
the minimum is achieved at least twice.
\end{proposition}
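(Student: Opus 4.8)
The plan is to reduce to a finite-length situation and then read the inequality off the combinatorics of the functions $d_{\le k}(M(\cdot))$ for $k\le n$, together with the two-element minors of $M$. First I would tensor with $R/\mf m^n$: by Proposition~\ref{p:base change} this is again a matroid over $R/\mf m^n$, and since $\mf m^n$ annihilates $N\otimes R/\mf m^n$ one has $d_{\le n}(N)=d_{\le n}(N\otimes R/\mf m^n)$ for every $R$-module $N$, so I may assume $\mf m^n=0$ and then $c(A):=d_{\le n}(M(A))$ is just the length of $M(A)$. The ingredients I would record are: (i)~running the necessity argument in the proof of Proposition~\ref{p:DVR M2} with $R/\mf m^k$ in place of $R/\mf m^n$ shows that $A\mapsto d_{\le k}(M(A))$ is the corank function of a classical polymatroid for every $k\le n$, hence is supermodular, and for $k=1$ the kernels become at most one-dimensional, so $A\mapsto d_1(M(A))$ is (a constant plus) the corank function of the honest classical matroid $M\otimes R/\mf m$; (ii)~the six two-element minors of $M$ — the restrictions $M|_{\{b,c\}}$ and the contractions $M/a$ — are matroids over $R$, hence satisfy conditions (L2a) and (L2b) of Proposition~\ref{p:DVR M2}.

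Next I would settle the classical case: for a matroid on $[3]$ with corank function $\cork$, the minimum of $\cork(1)+\cork(23)$, $\cork(2)+\cork(13)$, $\cork(3)+\cork(12)$ is attained at least twice. Matroid duality reduces this to rank at most $1$, since $\cork^*(\{i\})+\cork^*([3]\setminus\{i\})$ differs from $\cork(\{i\})+\cork([3]\setminus\{i\})$ by the constant $3-2r$, where $r$ is the matroid's rank, and the rank $0$ and rank $1$ cases come down to a short case distinction on how many of the three elements are loops. By ingredient~(i) this proves the case $n=1$.

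For general $n$ I would argue by contradiction, assuming after relabelling that $c(1)+c(23)<c(2)+c(13)$ and $c(1)+c(23)<c(3)+c(12)$. One cannot conclude level by level, because $A\mapsto d_k(M(A))$ need not be submodular: for the realizable two-element minor $M(\emptyset)=R/\mf m^2\oplus R/\mf m$, $M(1)=M(2)=R/\mf m^2$, $M(12)=R/\mf m$ (realized by $x_1=e_2$, $x_2=te_1+e_2$) one computes $d_2(M(\emptyset))+d_2(M(12))-d_2(M(1))-d_2(M(2))=-1$. The essential input is condition (L2b): for each two-element minor it forces the submodular defect of $d_{\le k}(M(\cdot))$ to vanish at exactly those levels $k$ where the two ``middle'' modules of that minor differ, and comparing these vanishing statements over all six minors should control, level by level, which of the sums $d_{\le k}(M(\{i\}))+d_{\le k}(M([3]\setminus\{i\}))$ is least and show that this choice is compatible across levels, so that the minimum of the summed quantities $c(\{i\})+c([3]\setminus\{i\})$ is attained at least twice. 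The real work is then the case analysis over the possible coincidence patterns of the sequences $d_\bullet(M(A))$, in the spirit of the proof of Proposition~\ref{p:DVR M2}; I expect this bookkeeping, rather than any single idea, to be the main obstacle.

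Finally, the case $n=\infty$ follows from the finite cases: if any module $M(A)$ occurring in the three sums has positive free rank the corresponding sum is $+\infty$, and otherwise all of them are torsion, so $d_{\le n}(M(A))=\ell(M(A))$ already for all large~$n$ and the conclusion passes to $n=\infty$.
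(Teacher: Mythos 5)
Your setup is sound and matches the paper's in its essentials: reduce level $1$ to the classical three-term statement, observe that the six two-element minors supply conditions (L2a) and (L2b), note (correctly, and with a valid counterexample) that the single-level functions $A\mapsto d_k(M(A))$ need not be supermodular so a naive level-by-level argument fails, and handle $n=\infty$ by stabilization. But the heart of the proof --- the inductive step from level $n-1$ to level $n$ --- is not actually carried out; you say the vanishing statements from (L2b) ``should control'' the level-by-level comparison and defer the ``bookkeeping'' as the main obstacle. That bookkeeping is precisely the content of the proposition, so as written this is a plan, not a proof.

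The missing step is closed in the paper by a short dichotomy, after one normalization you don't quite make explicit: adding the constant $d_{\leq n}(M(\emptyset))-\sum_i d_{\leq n}(M(i))$ to your three quantities turns them into the three submodular defects $s_{\leq n}(M\setminus a)$ of the deletions (adding a different constant gives the defects of the contractions), so it suffices to show either triple attains its minimum twice. Now induct on $n$. If $d_n(M(A))$ depends only on $|A|$, then all three increments $s_{\leq n}(M\setminus a)-s_{\leq n-1}(M\setminus a)$ coincide and the inductive hypothesis transfers. Otherwise two of the values $d_n(M(A))$ for $|A|=1$ (or, dually, for $|A|=2$) agree and differ from the third --- say $d_n(M(1))=d_n(M(2))\neq d_n(M(3))$ --- and then (L2b) applied to the minors $M\setminus 1$ and $M\setminus 2$ forces $s_{\leq n}(M\setminus 1)=s_{\leq n}(M\setminus 2)=0$, while (L2a) gives $s_{\leq n}(M\setminus 3)\geq 0$; so the minimum, namely $0$, is attained twice at level $n$ outright. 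No further case analysis on coincidence patterns of the sequences $d_\bullet(M(A))$ is needed. Incidentally, with this argument in hand your preliminary reduction to $R/\mf m^n$ and the separate treatment of the classical $n=1$ case both become unnecessary (the base case $n=0$ is trivial since all defects vanish).
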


\begin{proof}
If $M'$ is a two-element matroid over~$R$, let $s_{\leq n}(M)$ denote the alternating
sum appearing in conditions~(L2a,b).  
The matroid $M$ has 6 minors with two elements.  By adding 
\[ d_{\leq n}(M(\emptyset)) - d_{\leq n}(M(1)) - d_{\leq n}(M(2)) - d_{\leq n}(M(3)) \]
to the three quantities in the proposition, we obtain the three values 
$s_{\leq n}(M\setminus a)$ for the deletions; by adding instead
\[ d_{\leq n}(M(123)) - d_{\leq n}(M(12)) - d_{\leq n}(M(13)) - d_{\leq n}(M(23)), \]
we recover the three values $s_{\leq n}(M/a)$ for the contractions.
So it is equivalent to prove that either of these sets of three attains its minimum 
multiple times.

We use induction on~$n$.  As base case we take $n=0$, and have that 
$s_{\leq 0}(M')=0$ for any $M'$.  So let $n>0$.  
Suppose first, as $A$ varies over subsets of $[3]$, 
that $d_n(M(A))$ depends only on $|A|$.  
In this case, the three sums of form
\[d_n(M(\emptyset))-d_n(M(a))-d_n(M(b))+d_n(M(ab))\]
are equal (as of course are the three sums of form
\[d_n(M(c))-d_n(M(ac))-d_n(M(bc))+d_n(M(abc)). \quad)\]
Therefore the differences $s_{\leq n}(M\setminus a) - s_{\leq n-1}(M\setminus a)$
are all equal, and the induction step succeeds. 

So suppose this is not the case, and there are two sets $A$ and $B$
with $|A|=|B|\in\{1,2\}$, for which $d_n(M(A))\neq d_n(M(B))$.  
We will proceed assuming that $|A|=|B|=1$; the argument for the
other case is exactly analogous 
(in fact, the two cases are exchanged by replacing $M$ by its dual).  
Since there are only two possible values for $d_n(M(A))$ with $|A|=1$, 
namely $d_n(M(\emptyset))$ and $d_n(M(\emptyset))-1$, 
two of the $d_n(M(A))$ with $|A|=1$ are equal and are unequal to the third.
Without loss of generality suppose $d_n(M(1)) = d_n(M(2))\neq d_n(M(3))$.
By condition~(L2b), 
it follows that 
$s_{\leq n}(M\setminus 1) = s_{\leq n}(M\setminus 2) = 0$.
Since $s_{\leq n}(M\setminus 3)$ is nonnegative by condition~(L2a), 
this completes the induction for finite $n$.  

Finally, the case $n=\infty$ holds because if any $d_{\leq\infty}(A)$ is finite, 
then $d_{\leq n}(A)$ must be eventually constant and equal to $d_{\leq\infty}(A)$.
If the minimum in the proposition is not $\infty$, there is nothing to prove;
if this minimum is finite, the claim follows on replacing $n$ by
a sufficiently large finite number.
\end{proof}

Suppose given a matroid $M$ over~$R$
with ground set $E$.
For $A\subseteq E$, define $p_A$ to be $d_{\leq n}(M(A))$.  
Applying Proposition~\ref{p:DVR 3} to all three-element minors of $M$: 
the result can be restated to say that the \emph{tropicalizations}
of the relations
\begin{equation}\label{eq:Pluecker 3}
p_{Ab}p_{Acd} - p_{Ac}p_{Abd} + p_{Ad}p_{Abc} = 0
\end{equation}
hold of the numbers $p_\bullet$, where we continue abbreviating
$A\cup\{b,c\}$ as $Abc$ and similarly.  

For background on tropical geometry, see \cite{Maclagan-Sturmfels}.
We say a bare minimum here: tropicalization 
is a procedure transforming algebraic varieties
to tropical varieties, combinatorial ``shadows'' thereof, which are the sets of
points on which the tropicalizations of all elements of their ideal of defining equations vanish.
In our situation without a valued field,
the tropicalization of a polynomial $f=\sum_{a\in A} c_a x^a$ in variables $x_1,\ldots,x_d$
is said to vanish at those points $(x_i)$ where,
of linear forms $\sum_i a_ix_i$ corresponding to the monomials in $f$,
the minimum value is attained by two or more of the forms.

The relations \eqref{eq:Pluecker 3} are among the Pl\"ucker
relations for the full flag variety (of type $A$).
A Pl\"ucker relation is a quadratic relation among the $p_\bullet$ 
arising from the straightening algorithm for Young tableaux.
The full flag variety 
has a tropical analogue, the \emph{tropical flag Dressian}~\cite{Haque} cut out by the
\emph{tropical Pl\"ucker relations}, which arise from tropicalizing
those Pl\"ucker relations with the fewest terms.
To be precise, the tropical Pl\"ucker relations arise from the Pl\"ucker relations of the form 
\begin{equation}\label{eq:Pluecker gen}
\sum_{i\in T\setminus S}\pm\, p_{S\cup\{i\}}\, p_{T\setminus\{i\}} = 0
\end{equation}
where $S$ and~$T$ are subsets of~$E$ satisfying $|S|+1\leq|T|-1$.
When one considers only those $p_A$ with $|A|=r$
one can restrict to the relations with $|S|=r-1$ and $|T|=r+1$.
These relations define the \emph{Dressian}
$\mathrm{Dr}(r,n)$, which is one Grassmannian-like space in tropical geometry.
It is the parameter space for tropical linear spaces \cite{HJS}.   
That is, there is a tropical linear space determined by~$(p_A : |A|=r)$
if and only if this point lies on the Dressian.
Observe that if $|S|+1=|T|-1$ and $S\subseteq T$, the relation~\ref{eq:Pluecker gen} 
vacuously says $0=0$, so the smallest
tropical Pl\"ucker relations have three terms.

Considering all the tropical Pl\"ucker relations yields the flag Dressian, 
which is to the full flag variety as the Dressian is to the Grassmannian:
while points in the Dressian are tropical linear spaces,
points in the flag Dressian are full flags of tropical linear spaces,
where a flag is defined by satisfying incidence conditions.


\begin{proposition}\label{p:DVR exchange}
Define $p_A = d_{\leq n}(M(A))$,
where $M$ is a matroid over~$R$, and $n$ is a natural or $\infty$.
Then the collection of $p_A$ for all subsets $A\subseteq E$
satisfies every tropical Pl\"ucker relation.
\end{proposition}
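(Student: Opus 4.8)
The plan is to amplify the three-element case, Proposition~\ref{p:DVR 3}, to all $|A_{\rm e}|=1$ relations by a combinatorial (tropical) straightening argument. I would begin by disposing of the case $n=\infty$: if every module $M(A)$ occurring in the relation has a free summand, then all the relevant $p_A$ are $\infty$ and the relation holds trivially; otherwise one may replace $n$ by a large enough finite integer without altering any of the finite $p_A$, exactly as in the last paragraph of the proof of Proposition~\ref{p:DVR 3}. So assume $n<\infty$. Next I would unwind the relation. With $A_{\rm e}=\{a\}$, the requirement $A\cap B\subseteq A_{\rm f}\cap B_{\rm f}$ forces $a\in A\setminus B$, $B_{\rm f}=A\cap B$ and $B_{\rm e}=B\setminus A$; writing $C=A\setminus\{a\}$, $D=A\cap B$, $P=\{a\}\cup(B\setminus A)$, the terms of~\eqref{eq:Pluecker gen} are exactly the $p_{C\cup z}\,p_{D\cup(P\setminus z)}$ for $z\in P$, where $D\subseteq C$, $C\cap P=\emptyset$, and $|C\setminus D|\leq|P|-2$ (the last inequality being $|A|\leq|B|$). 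Contracting $D$ and deleting $E\setminus(A\cup B)$ produces a matroid $M'$ over~$R$ on the ground set $(C\setminus D)\amalg P$ with $p_{C\cup z}=d_{\leq n}(M'((C\setminus D)\cup z))$ and $p_{D\cup(P\setminus z)}=d_{\leq n}(M'(P\setminus z))$, so it suffices to prove: for every matroid $M'$ over~$R$ on a ground set $C'\amalg P$ with $|C'|\leq|P|-2$, the minimum over $z\in P$ of $d_{\leq n}(M'(C'\cup z))+d_{\leq n}(M'(P\setminus z))$ is attained at least twice.

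I would prove this by induction on $|C'\amalg P|$. When $|C'\amalg P|\leq 2$ the relation has only one distinct monomial and is vacuous; when $|C'\amalg P|=3$ one necessarily has $C'=\emptyset$ and $|P|=3$, and the statement is Proposition~\ref{p:DVR 3} applied to~$M'$. For the inductive step I would pick an index $z_0\in P$ attaining the minimum and apply Proposition~\ref{p:DVR 3} to a three-element minor of a suitable contraction/deletion of $M'$ chosen to involve $z_0$; this is the tropical analogue of a single straightening move on the tableau indexing the monomial $p_{C'\cup z_0}\,p_{P\setminus z_0}$, and it replaces that monomial in the relation by ``smaller'' ones to which the inductive hypothesis — that is, the relations on smaller ground sets obtained by further contracting and deleting — applies. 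Throughout, I would use Propositions~\ref{p:DVR M1} and~\ref{p:DVR M2}, concretely conditions (L1), (L2a) and (L2b) for the two-element minors of $M'$, to control by how much each $d_{\leq n}$ can drop, and hence to pin down where the minima of the relations in play can occur.

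The hard part will be carrying out this amplification rigorously. The tropical semiring has no subtraction, so one cannot simply write the long Pl\"ucker relation as an algebraic combination of three-term relations and tropicalize; the inductive step must instead be executed as an explicit case analysis tracking attainment of minima, in the spirit of the passage from the local (three-term) exchange property of a valuated matroid to its general symmetric exchange property, but complicated here by the mixed-rank ``flag'' feature that $C'$ may be nonempty. A more self-contained alternative, which I would fall back on if the reduction to three-term relations becomes unwieldy, is to imitate the proof of Proposition~\ref{p:DVR 3} directly: induct on~$n$, at each stage examining the level-$n$ increments $d_n(M'(\cdot))$ — each a $\{0,1\}$-valued function by Proposition~\ref{p:DVR M1} — and invoking condition (L2b) to force the coincidences among the terms of the relation that the induction requires.
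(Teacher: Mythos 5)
Your setup is sound and matches the paper's: the reduction of an $|A_{\rm e}|=1$ relation to the family of terms $p_{C\cup z}\,p_{D\cup(P\setminus z)}$ for $z\in P$ is correct, the base case is indeed Proposition~\ref{p:DVR 3}, and the paper likewise argues by induction on $|A\setminus B|+|B\setminus A|$ (which equals your $|C'|+|P|$). But the proposal stops exactly where the real work begins. The inductive step is described only as ``apply Proposition~\ref{p:DVR 3} to a suitable three-element minor involving $z_0$ \dots\ a single straightening move,'' and you yourself concede that, for lack of tropical subtraction, this must be replaced by an explicit case analysis of where minima are attained. That case analysis \emph{is} the proof, and it is not routine. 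The paper has to (i) treat the case $|A\setminus B|=|B\setminus A|=2$ separately, via an argument on six auxiliary sums $p_{F\cup S_1}+p_{F\cup S_2}+p_{F\cup S_3}$ which either produces the desired double attainment directly or forces the identities $p_{Fij}-p_{Fj}=p_{Fik}-p_{Fk}$ for all $i,j,k$; and (ii) in the general case, introduce a two-parameter family of triples $S_{a,b}$ with associated sums $\sigma(S_{a,b})$, each row and column of which is a shifted tropical Pl\"ucker relation of strictly smaller complexity (smaller symmetric difference, obtained by moving one element $c_1$ across and enlarging $B$ by a singleton $D\subseteq A_{\rm f}\setminus B$), and then run an extremal argument: choose $c_1,c_2$ minimizing the number of off-diagonal pairs attaining the global minimum and derive a contradiction by passing to the family $P'$ built from $\{a,c_2\}$. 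None of this auxiliary structure --- in particular, which smaller relation the induction hypothesis is applied to, and how the diagonal terms $\sigma(S_{c,c})$ (the ones you actually care about) are forced to tie --- is present or predictable from your sketch.

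Your fallback (imitate the proof of Proposition~\ref{p:DVR 3} by inducting on $n$ and using (L2b) on the level-$n$ increments) also does not obviously extend: that proof leans on the fact that among the three singletons only two values of $d_n$ are possible, so a coincidence is automatic; for longer relations the increments $d_n(M(C'\cup z))$, $d_n(M(P\setminus z))$ need not exhibit any such forced coincidence, and the paper does not attempt this route beyond the three-term case. So the proposal should be graded as a correct strategy with the central combinatorial argument missing.
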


\begin{corollary}\label{conj:DVR exchange}
The collection of $p_A$ in Proposition~\ref{p:DVR exchange}
gives a point on the flag Dressian.
\end{corollary}

In particular, for every $0<r<n$,
the point $(p_A : |A|=r)$ lies on the Dressian $\mathrm{Dr}(r,n)$.
As another equivalent formulation,
if $M$ is a matroid over $R$, then in
the regular subdivision of the hypersimplex $\conv\{\sum_{i\in A} e_i\}$ wherein
the height of vertex $A$ is $p_A = d_{\leq n}(M(A))$, 
then all maximal faces of this subdivision are matroid polytopes. 
For more on these correspondences see \cite{Speyer} and 
\cite[Section 4.4]{Maclagan-Sturmfels}.

\begin{remark}\label{refereequestion}
%
In general, $M$ is not determined by
the collection $(p_A : A\subseteq E)$,
for any fixed $n$.
For instance, they do not distinguish the two one-element matroids
$M(\emptyset)=R/(\mf m^2)$, $M(1)=R/\mf m$ and
$M(\emptyset)=R/\mf m\oplus R/\mf m$, $M(1)=R/\mf m$.
However, $M$ is determined by the whole family of tuples 
$(p_A : A\subseteq E)$ as $n$ varies
(geometrically, by a map from a tropical ray into each Dressian).  
\end{remark}

\begin{corollary}\label{cor:valuated matroid}
Let $M$ be a matroid over a DVR $(R,\mf m)$.  Then the function
$A\mapsto \dim_{R/\mf m} M(A)$ makes the generic matroid of~$M$ into a
valuated matroid, in the sense of Dress and Wenzel~\cite{DW}.
\end{corollary}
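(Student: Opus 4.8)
The plan is to derive this directly from Proposition~\ref{p:DVR exchange} --- equivalently, from Proposition~\ref{p:DVR 3} applied to all three-element minors --- specialised to $n=\infty$, together with the standard, essentially definitional, correspondence between points of the Dressian and valuated matroids.

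First I would pin down the function being asserted to be a valuation. For a f.g.\ $R$-module $N$ the numbers $d_{\leq i}(N)=\dim_{R/\mf m}(N/\mf m^iN)$ increase with~$i$, and since $\bigcap_i\mf m^iN=0$ by Krull's intersection theorem their limit $d_{\leq\infty}(N)\doteq\lim_i d_{\leq i}(N)\in\Z_{\geq0}\cup\{\infty\}$ is what $\dim_{R/\mf m}N$ denotes here; it is finite exactly when $N$ is torsion, in which case it equals the length of~$N$. So, putting $v(A)\doteq\dim_{R/\mf m}M(A)=d_{\leq\infty}(M(A))$, one has $v(A)<\infty$ if and only if $M(A)$ is torsion, that is, if and only if $\Mg(A)=M(A)\otimes\Frac(R)$ vanishes. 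We may assume $M$ is essential (by Lemma~\ref{lem:essential}; a projective empty summand would make every $v(A)$ infinite, and is discarded in forming the generic matroid anyway). Let $r$ be the rank of the matroid $\Mg$; by Proposition~\ref{p:classical} the quantity $\dim\Mg(A)$ is the corank of~$A$, so for a set $B$ with $|B|=r$ we have $\Mg(B)=0$ precisely when $B$ is a basis of $\Mg$. Hence the sets $B\in\binom{E}{r}$ with $v(B)<\infty$ are exactly the bases of $\Mg$; in particular the matroid underlying the datum $v$ is $\Mg$, as the statement requires.

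It remains to produce the exchange inequality. Applying Proposition~\ref{p:DVR exchange} with $n=\infty$ --- a value explicitly allowed there, for which $p_A=d_{\leq\infty}(M(A))=v(A)$ --- the tuple $(v(B):|B|=r)$ is a point of the Dressian $\mathrm{Dr}(r,|E|)$, i.e.\ it satisfies every three-term tropical Pl\"ucker relation. By the equivalence between points of the Dressian and valuated matroids in the sense of Dress and Wenzel (\cite{DW}; see also \cite{Speyer} and \cite[\S4.4]{Maclagan-Sturmfels}), the function $v$ restricted to $\binom{E}{r}$ is then a valuated matroid, and by the previous paragraph its underlying matroid is $\Mg$. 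This proves the corollary.

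The substance has already been carried out in Propositions~\ref{p:DVR 3} and~\ref{p:DVR exchange}; the remaining points are routine --- identifying when $\dim_{R/\mf m}M(A)$ is finite (done above via Proposition~\ref{p:classical}) and matching the $\min$/sign conventions of \cite{DW} with the tropical Pl\"ucker condition. The only thing one might otherwise want to invoke is that the support of a tropical Pl\"ucker vector is a matroid; here even that is unnecessary, since $\Mg$ is already a matroid and we have identified its bases with that support.
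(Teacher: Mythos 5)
Your proof is correct and rests on the same key ingredient as the paper's, namely Proposition~\ref{p:DVR exchange}, but you package the final step differently. The paper fixes a finite $n\gg 0$ larger than the length of every finite-length summand of any $M(A)$; then $p_A=d_{\leq n}(M(A))$ is finite on bases and $\geq n$ on non-bases, and the Dress--Wenzel exchange axiom is read off directly from the tropical Pl\"ucker relation for the pair $A,B$ (with $|A_{\rm e}|=1$): the minimum is attained at $p_A+p_B$ and must be attained at another term, which by the size gap forces the two sets in that term to be bases. You instead take $n=\infty$ so that $p_A=v(A)$ exactly, observe (via Proposition~\ref{p:classical} and essentiality) that the support of $(v(B):|B|=r)$ is precisely the set of bases of $\Mg$, and then invoke the standard equivalence between points of the Dressian and Dress--Wenzel valuated matroids. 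Your route is a touch less self-contained in that it cites the Dressian$\leftrightarrow$valuated-matroid correspondence as a black box, whereas the paper verifies the exchange axiom directly; on the other hand you handle the essentiality reduction explicitly and identify the underlying matroid as $\Mg$ with slightly more care than the paper does. Both versions are valid.
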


To be precise, our sign convention is the opposite of the one adopted in~\cite{DW};
for perfect agreement we would have to negate this function.  But our sign convention
is frequently adopted in tropical geometry, see e.g.\ \cite{Maclagan-Sturmfels}.

\begin{proof}
Choose $n\gg0$ sufficiently larger than the greatest length of any 
finite length summand of a module $M(A)$.  The lengths of 
$M(A)\otimes R/\mf m^n$ for $A$ not a spanning set of the generic matroid
are sufficiently greater than these lengths when $A$ is a spanning set,  
since $A$ is a spanning set of the generic matroid if and only if
$M(A)$ has $R$ as a summand.  

The axiom of Dress and Wenzel for the valuation $v$ of a valuated matroid is that, 
given bases $A$ and $B$ and $a\in A\setminus B$, there exists $b\in B\setminus A$
so that $A\setminus\{a\}\cup\{b\}$ and $B\cup\{a\}\setminus\{b\}$ are bases,
and such that
\[v(A)+v(B)\geq v(A\setminus\{a\}\cup\{b\})+v(B\cup\{a\}\setminus\{b\}).\]
The fact that the sets on the right hand side are bases
follows from our choice of~$n$, and the inequality is immediate from the minimum in the 
Pl\"ucker relation 
\[
\sum_{b\in (B\setminus A)\cup\{a\}}\pm\, p_{A\setminus\{a\}\cup\{b\}}\, p_{B\cup\{a\}\setminus\{b\}} = 0
\]
being attained multiply, since $p_Ap_B$ is one of its terms.
\end{proof}

\begin{remark}\label{rem:pruefer}
We expect that matroids over the ring of integers in the Puiseux series,
$R=\bigcup_{n\geq1} \K[\![t^{1/n}]\!]$,
should directly produce tropical objects with coordinates in $\mathbb Q$,
when the length of $R/(t^a)$ is taken as~$a$ for $a\in\mathbb Q$;
and that it is possible to use other valued rings similarly.
Everywhere we have assumed $R$ is a Dedekind domain, we expect it is sufficient 
to let $R$ be a \emph{Pr\"ufer domain}, that is, a ring all of whose localizations at primes
are valuation rings, but not necessarily discrete (which is to say Noetherian).  
Verifying this, and extending those parts of the theory which have relied on Noetherianity, 
is left for future work.
\end{remark}

\begin{proof}[Proof of Proposition~\ref{p:DVR exchange}]
In any Pl\"ucker relation with $|A_{\rm e}|=1$, the constraint $|A_{\rm e}|+|B_{\rm e}| = |B\setminus A|+1$
implies that $B_{\rm e}$ is all of $B\setminus A$.  So, once $A$ and $B$ are chosen,
there is just one exchange relation for each one-element subset (i.e.\ element) of $A\setminus B$.

We will proceed by induction on $|A\setminus B| + |B\setminus A|$.
If $A$ is a subset of $B$, or if $|A\setminus B| = |B\setminus A| = 1$,
there is no nontrivial Pl\"ucker relation.  Thus the first nontrivial case
is $|A\setminus B| = 1$ and $|B\setminus A| = 2$, and this 
is equation~\eqref{eq:Pluecker 3}, which we have established as a base case.

The second nontrivial case is  $|A\setminus B| = |B\setminus A| = 2$,
and we again handle this case separately.  In this case, let
$F=A\cap B$, which equals $B_{\rm f}$ and is one element short of~$A_{\rm f}$.
Suppose without loss of generality that $(A\setminus B)\cup(B\setminus A)=\{1,2,3,4\}$.
Then the tropicalized Pl\"ucker relation to be proved involves the three terms
\[
p_{F12}+p_{F34}, \quad 
p_{F13}+p_{24}, \quad
p_{F14}+p_{F23}.
\]

Consider the six sums $p_{F\cup S_1}+p_{F\cup S_2}+p_{F\cup S_3}$,
where $S_1$, $S_2$, and~$S_3$ are subsets of~$\{1,2,3,4\}$ of
respective sizes 2, 2, and~1, whose union is $\{1,2,3,4\}$, and such that
1 is the unique element appearing twice.  There are six of these sums
(not twelve, because the sum is the same even if $S_1$ and~$S_2$ are exchanged).
Among them there are three sums in which $p_{F12}$ appears.
The remaining two summands in these sums are the tropicalized terms of
a Pl\"ucker relation~\eqref{eq:Pluecker 3}, so their minimum is attained twice.
The same goes for the sums in which $p_{F13}$ appears, or $p_{F14}$.

From there, it follows that if the minimum value of all six of these sums was not attained by,
say, $(S_1,S_2,S_3) = (13,14,2)$, it would be attained at both 
$(13,24,1)$ and $(23,14,1)$, and then by subtracting the common $p_{F\cup\{1\}}$
we would be finished.  Accordingly, and by symmetry permuting $\{2,3,4\}$, 
we may assume that $(13,14,2)$, $(12,14,3)$, and $(12,13,4)$ all attain the minimum.
In particular, they are all equal, and we rearrange to
\[p_{F12}-p_{F2} = p_{F13}-p_{F3} = p_{F14}-p_{F4}.\]

The same argument can be repeated with any of the elements of $\{2,3,4\}$ taking the place of~1.
So if none of those gives the relation sought, we may conclude
$p_{Fij}-p_{Fj} = p_{Fik}-p_{Fk}$
for every $i,j,k$ in $\{1,2,3,4\}$.  But then
\[(p_{F12}-p_{F2})+(p_{F34}-p_{F4}) = (p_{F14}-p_{F4})+(p_{F23}-p_{F2})\]
so that $p_{F12}+p_{F34} = p_{F14}+p_{F23}$, and by symmetry
$p_{F13}p_{F24}$ is equal to both of these as well.  This finishes the case
$|A\setminus B| = |B\setminus A| = 2$.

We finally proceed to the remaining cases, where $|B\setminus A|>2$.
For convenience, write $C = A_{\rm e}\cup B_{\rm e}$.
It is reasonable to do this because, when $|A_{\rm e}|=1$,
there is only one distinct Pl\"ucker relation for $C$:
different partitions of it back into $A_{\rm e}$ and $B_{\rm e}$ of the correct sizes
yield the same relation.

Let $c_1\neq c_2$ be elements of~$C$.  
Let $D$ be a single-element subset of $A_{\rm f}\setminus B$, 
if that set is nonempty, and let $D=\emptyset$ otherwise.
Consider the set $P$ of triples of sets $S=(S_1,S_2,S_3)$
where $S_1$, $S_2$, and $S_3$ are subsets of~$C$ with 
$|S_1|=1$, $|S_2| = |C|-2$, $|S_3| = |C|-1$, 
and such that the multiset union of $S_1$, $S_2$ and~$S_3$ contains 
$c_1$ and $c_2$ with multiplicity~1, and each element of $C\setminus\{c_1,c_2\}$
with multiplicity~2.  
To each $S\in P$ associate the sum
\[\sigma(S) \doteq p_{A_{\rm f}\cup S_1} + p_{B_{\rm f}\cup D\cup S_2} + p_{B_{\rm f}\cup S_3}.\]

A triple $S\in P$ is determined by two elements of~$C$,
namely the unique element $a$ of~$S_1$ and the unique element $b$
of~$C\setminus S_3$.  We write $S_{a,b}$ for this triple, and observe that
$P$ contains exactly those $S_{a,b}$ with either $a=b$ or $a\not\in\{c_1,c_2\}$, $b\in\{c_1,c_2\}$.

In particular, $P$ contains three elements $S_{a,c}$
for each $a\in C\setminus\{c_1,c_2\}$.
The sums $\sigma(S_{a,c})$ for these elements 
are the constant $p_{A_{\rm f}\cup \{a\}}$ plus the tropicalizations of the 
three terms of an instance of~\eqref{eq:Pluecker 3} if $D$ is empty,
respectively the three terms of a Pl\"ucker relation 
where the sets corresponding to $A\setminus B$ and $B\setminus A$
each have size 2 if $D$ is a singleton.  
Accordingly, the minimum value of $\sigma(S_{a,c})$ as $c$ varies is attained twice.

Similarly, $P$ contains $|C|-1$ elements $S_{c,b}$ for each $b\in\{c_1,c_2\}$.
The sums $\sigma(S_{c,b})$ for these elements are
the constant $p_{B_f\cup C\setminus\{b\}}$ added to
tropicalizations of the terms in another Pl\"ucker relation, 
where $c_1$ has been removed from whichever of $A$ and~$B$ it was in,
and $D\subseteq A$ has been added to $B$.
This Pl\"ucker relation is one of those covered by the inductive hypothesis,
since we've shrunk the symmetric difference of $A$ and~$B$.
So the minimum value of $\sigma(S_{c,b})$ as $c$ varies is also attained twice.


Once more, for the $|C|$ elements of $P$ of the form $S_{c,c}$,
each sum $\sigma(S_{c,c})$ is
the constant $p_{B_{\rm f}\cup C\setminus\{c_1,c_2\}}$ plus
the tropicalization of a term in the Pl\"ucker relation
whose tropical vanishing we are concerned with.  So our objective
is to show that the minimum value of $\sigma(S_{c,c})$ is attained twice.

Now let $c_1$ and $c_2$ be chosen so that the number of pairs $a\neq b$
for which $\sigma(S_{a,b})$ attains the minimum value $x=\min_{S\in P}\sigma(S)$
is as small as possible.  We will prove that 
the minimum value of $\sigma(S_{c,c})$ is indeed attained twice.
Suppose not.  We then claim that there exists $a\not\in\{c_1,c_2\}$
such that $\sigma(S_{a,c_1})=\sigma(S_{a,c_2})=x$.  
If this were false, choose $a\neq b$ so that $\sigma(S_{a,b})=x$
(this must be possible, because if $\sigma(S_{a,b})=x$ only
when $a=b$ then the minimum of either $S_{a,c}$ or $S_{c,b}$ as $c$ varies,
whichever is appropriate, is attained just once.)
By the structure of~$P$, $b$ must be $c_1$ or~$c_2$; wlog let it be $c_1$.
That is, we are assuming $\sigma(S_{a,c_1})=x$.
Then by assumption $\sigma(S_{a,c_2})>x$, so
by the three-term Pl\"ucker relation, $\sigma(S_{a,a})=x$.  
Moreover there must exist $a'\neq a$ such that $\sigma(S_{a',c_1})=x$,
by the other Pl\"ucker relation.  If $a'=c_1$ then we have a contradiction
with our first assumption (that the minimum is attained twice); 
otherwise we repeat for $a'$ the argument
we made for~$a$ and have a contradiction with our second.

Thus, we have $\sigma(S_{a,c_1})=\sigma(S_{a,c_2})=x$.  Now, by assumption,
at least one $c\in\{c_1,c_2\}$ has $\sigma(S_{c,c})>x$; without loss of
generality let it be $c_1$.  
Let $P'$ be defined like $P$ except using $\{a,c_2\}$ where $P$ uses $\{c_1,c_2\}$.
For each $S=(S_1,S_2,S_3)$ in~$P$ such that $a\in S_2$,
there is a corresponding $S'=(S_1,S_2\setminus\{a\}\cup\{c_1\},S_3)$ 
in $P'$, with
\[\sigma(S)-\sigma(S')=p_{B_{\rm f}\cup D\cup S_2}-p_{B_{\rm f}\cup D\cup S_2\setminus\{a\}\cup\{c_1\}}.\]
Therefore, if $S$ attains the minimum value $x$ of~$\sigma(S)$ over~$P$,
then $S'$ attains the minimum value of $\sigma(S')$ over~$P'$
(unless this new minimum value is strictly less than 
$x-p_{B_{\rm f}\cup D\cup S_2}-p_{B_{\rm f}\cup D\cup S_2\setminus\{a\}\cup\{c_1\}}$,
in which case only one $S_{a,b}\in P'$ attains it, which contradicts our choice of $c_1$ and~$c_2$).
But in $\{S_{a,b}\in P : a\neq b\}$ we have two elements 
$S_{a,c_1}$ and~$S_{a,c_2}$ without counterparts in~$P'$, 
both of which attain the minimum, whereas in $\{S_{a,b}\in P' : a\neq b\}$
we have the counterpart of $S_{c_1,c_1}$, which does not attain the minimum.
So this is also a contradiction to our choice of $c_1$ and~$c_2$,
and our Pl\"ucker relation is proved in this case, completing the proof.
\end{proof}

\section{Global structure of matroids over a Dedekind domain}\label{sec:global}
Throughout this section $R$ will be a Dedekind domain. 
Let us recall that given a $R-$module $N$, by $\det(N)$ we will denote its class in the Picard group, as defined in Section \ref{sec:Dedekind}.
Understanding the local ring case, we can now give a necessary and sufficient 
condition for which pairs of modules can occur in condition~(M1).

\begin{proposition}\label{p:Dedekind M1}
Let $N$ and~$N'$ be f.g.\ $R$-modules.  
There exists a surjection $N\to N'$ with cyclic kernel if and only if
there exists such a surjection $N_{\mf m}\to N'_{\mf m}$
after localizing at each maximal prime $\mf m$ of~$R$, and 
\begin{itemize}
\item if $\rk(N)-\rk(N')=0$ then $\det(\projpart{N}) = \det(\projpart{N'})$, whereas
\item if $\rk(N)-\rk(N')=1$ then $\det(N) = \det(N')$.
\end{itemize}
\end{proposition}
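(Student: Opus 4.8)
\emph{Overview.} The plan is to prove the two implications separately: necessity is short bookkeeping in $K_0(R)$, while sufficiency — the substance — amounts to exhibiting a single element $x\in N$ with $N/\langle x\rangle\cong N'$, since a surjection $N\to N'$ with cyclic kernel is the same datum as such an $x$.

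\emph{Necessity.} If $\phi\colon N\to N'$ is a surjection with cyclic kernel $R/I$, then by exactness of localization each $\phi_{\mf m}$ is again such a surjection. For the determinant conditions I would use $0\to R/I\to N\to N'\to 0$. If $I=0$ the kernel is $R$, so $\rk N-\rk N'=1$ and, taking classes in $K_0(R)=\Z\oplus\Pic(R)$ (Proposition~\ref{p:Dedekind K}), $\det N=\det N'$ because $\det R$ is trivial. If $I\ne0$ the kernel is torsion, so $\rk N=\rk N'$; restricting $\phi$ to torsion submodules (a preimage of a torsion element is torsion) gives $0\to R/I\to\tors N\to\tors{N'}\to0$, whence $\projpart N\cong\projpart{N'}$ and in particular their determinants agree.

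\emph{Sufficiency.} Localizing one of the given $\phi_{\mf m}$ at the generic point shows the local hypotheses force $\rho:=\rk N-\rk N'\in\{0,1\}$, matching the two bulleted cases. Decompose $N=\projpart N\oplus\tors N$ and $N'=\projpart{N'}\oplus\tors{N'}$ with $\mf m$-primary parts $\tors N=\bigoplus_{\mf m}T_{\mf m}$, $\tors{N'}=\bigoplus_{\mf m}T'_{\mf m}$ (Proposition~\ref{p:Dedekind structure}); only finitely many primes $S$ carry nonzero torsion, and at each $\mf m$ Proposition~\ref{p:DVR M1} and Corollary~\ref{cor:DVR M1} describe exactly which $x_{\mf m}\in N_{\mf m}$ realize a prescribed cyclic-kernel quotient. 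The case $\rho=0$ is then easy: the hypothesis $\det\projpart N=\det\projpart{N'}$ forces $\projpart N\cong\projpart{N'}$, so it suffices to surject $\tors N$ onto $\tors{N'}$, and since $\tors N=\bigoplus_{\mf m}T_{\mf m}$ is literally a direct sum over $S$ one simply takes the direct sum of the local surjections $T_{\mf m}\to T'_{\mf m}$ provided by Proposition~\ref{p:DVR M1}; its kernel $\bigoplus_{\mf m}R/\mf m^{a_{\mf m}}\cong R/\prod_{\mf m}\mf m^{a_{\mf m}}$ is cyclic.

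\emph{The hard case $\rho=1$.} Peeling off common free summands via $R^{k-1}\oplus J\cong R\oplus(R^{k-2}\oplus J)$ (Proposition~\ref{p:Dedekind structure}) one may assume $\rk N\le2$; the sought $x$ must then be such that $\langle x\rangle$ is torsion-free of rank~$1$, $x$ generates a free rank-$1$ summand of $N_{\mf m}$ at every $\mf m\notin S$, and $x_{\mf m}$ realizes the prescribed quotient at each $\mf m\in S$. Two sub-cases arise. When $\rk N=1$, so $\projpart N$ is a fractional ideal $J$ and $N'$ is torsion, combining the local inequalities with $\det N=\det N'$ shows $-[J]=\sum_{\mf m\in S}n_{\mf m}[\mf m]$ with each $n_{\mf m}\ge0$ an explicit difference of torsion lengths; hence $J\cdot\prod_{\mf m}\mf m^{n_{\mf m}}$ is principal, its generator furnishes the $\projpart N$-component of $x$ (which indeed generates $\projpart N_{\mf m}$ for $\mf m\notin S$), and the $\tors N$-component is then pinned down at $S$ by Corollary~\ref{cor:DVR M1} and the Chinese remainder theorem. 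When $\rk N=2$, $\projpart N$ has a free summand, so a Serre-style argument produces an $x$ generating a free rank-$1$ summand away from $S$ with prescribed behaviour at $S$, with no Picard obstruction. In either sub-case $\langle x\rangle\cong R$ automatically, and $N/\langle x\rangle$ has rank $\rk N'$, determinant $\det N=\det N'$, and localizations isomorphic to those of $N'$, so by the classification of f.g.\ modules over a Dedekind domain (Propositions~\ref{p:Dedekind structure} and~\ref{p:Dedekind K}) it equals $N'$. The most delicate point is precisely this $\rho=1$ construction — verifying that the choices at $S$ coming from Corollary~\ref{cor:DVR M1} are compatible with the fixed $\projpart N$-component, and that $\det N=\det N'$ is exactly the condition that makes the needed generator of $\projpart N$ exist — which is where the one-dimensionality of $R$ and the generation of $\Pic(R)$ by classes of maximal ideals are both used.
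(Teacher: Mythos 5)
The necessity direction and the $\rho=0$ (equal-rank) half of sufficiency coincide with the paper's argument: both restrict to torsion parts, sum the local surjections over the finite support, and observe the resulting kernel is a sum of cyclic modules with disjoint supports, hence cyclic. The divergence is in the $\rho=1$ case, which is the substantive part of the proposition. The paper handles it by a short dualization trick: it forms a one-element ``matroid'' with torsion modules $M(\emptyset)=\tors{N'}$, $M(1)=\tors N$, which falls under the $\rho=0$ case already proved, then takes $M^*$ (relying on Section~\ref{sec:duality}) and direct-sums with $\emptyset_{\projpart{N'}}$. You instead construct the element $x\in N$ directly. That is a genuinely different, more hands-on route; it avoids the duality machinery at the price of a case analysis (rank reduction, then $\rk N\in\{1,2\}$) and of some nontrivial verifications you only gesture at.

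The sketch is sound in spirit, and the $\rk N=1$ subcase is essentially complete once one checks that the valuation $n_{\mf m}$ of the $\projpart$-component of $x_{\mf m}$ is pinned down by Corollary~\ref{cor:DVR M1} to be $\ell(T'_{\mf m})-\ell(T_{\mf m})\ge 0$, and that $\det N=\det N'$ then reads off as $[J]+\sum n_{\mf m}[\mf m]=0$, exactly principality of $J\prod\mf m^{n_{\mf m}}$. But two points deserve tightening. First, the free-summand peeling $R^{k-1}\oplus J\cong R\oplus(R^{k-2}\oplus J)$ only brings you down to $\rk N=2$, not $\rk N=1$, when $\projpart{N'}$ is not principal; the clean way to continue, not visible in your text, is to rewrite $\projpart N\cong J'\oplus J''$ with $J'=\projpart{N'}$ and peel $J'$ off (not an $R$), which reduces the $\rk N=2$ subcase to the $\rk N=1$ subcase with the determinant condition still intact. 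Second, in your stated $\rk N=2$ route, ``Serre-style argument'' is doing a lot of unstated work: what is actually needed is a CRT/approximation argument producing $\bar x\in R\oplus J$ with $v_{\mf m}(\bar x)=n_{\mf m}$ on $S$ and $v_{\mf m}(\bar x)=0$ off $S$ (using \emph{both} coordinates, since the $\mf m^{n_{\mf m}}$-product need not be principal), after which one invokes $\det N=\det N'$ only at the very end to identify $\projpart{(N/\langle x\rangle)}$ with $\projpart{N'}$ via the classification. As written, your phrase ``with no Picard obstruction'' obscures the fact that $\det N=\det N'$ is still used, just at a different step than in the $\rk N=1$ subcase. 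With these details supplied, your route is a valid alternative to the paper's duality-based proof: less slick, but more elementary in that it needs nothing from Section~\ref{sec:duality}.
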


To test whether surjections exist in the localizations,
we have the criterion in Proposition~\ref{p:DVR M1}.

\begin{proof}
\noindent\emph{Necessity.}  Localization is a base change
so preserves condition~(M1).  
If $\rk(N) = \rk(N')$, then the kernel of $N\to N'$
is contained in $\tors{N}$, so that $\projpart{N}\cong\projpart{N'}$,
and so their classes are equal.
If $\rk(N) = \rk(N')+1$
then the kernel of $N\to N'$
must be a cyclic rank~1 $R$-module, which up to isomorphism is $R$.
Therefore $\det(N) = \det(R)+\det(N') = \det(N')$
by the definition of $K_0(R)$. 

\noindent\emph{Sufficiency.}  Note first that $\rk(N)-\rk(N')\in\{0,1\}$,
because the same is true in every localization.  
Let us suppose that $\rk(N)=\rk(N')$.  Then 
$\det(\projpart{N})=\det(\projpart{N'})$ implies 
$\projpart{N}\cong\projpart{N'}$, by Proposition~\ref{p:Dedekind structure}.
Moreover, $\tors{N}$ and $\tors{N'}$ are the direct sums of their localizations.
The kernel of each of the given maps $N_{\mf m}\to N'_{\mf m}$ 
is contained in the torsion $\tors{(N_{\mf m})} = (\tors{N})_{\mf m}$,
so a map $(\tors{N})_{\mf m}\to (\tors{N'})_{\mf m}$ is induced.
The direct sum of all these maps is a map $\tors{N}\to\tors{N'}$
which is still a surjection; its kernel is a sum of cyclic modules
with disjoint support, which is still cyclic.  
Taking the direct sum with an isomorphism $\projpart{N}\to\projpart{N'}$
yields the requisite map $N\to N'$.  

Now suppose that $\rk(N) = \rk(N') +1$.  
In this case, construct a set function $M:\mc B(1)\to\fgMod{R}$
so that $M(\emptyset) = \tors{N'}$ 
and $M(1) = \tors{N}$.  
Note that $M^*(\emptyset)$ and $M^*(1)$ both have
rank~0 and therefore trivial projective part.  
Moreover, there exist localized surjections with
cyclic kernel $M(\emptyset)_{\mf m}\to M(1)_{\mf m}$
for each~$\mf m$.
This is by Proposition~\ref{p:duality}(c)
(or because localization is flat),
because $M(\emptyset)_{\mf m}$ and $M(1)_{\mf m}$
are the modules of a one-element matroid over $R_{\mf m}$, 
the dual of the matroid built from $N_{\mf m}\to N'_{\mf m}$.  

Using the previous case, 
there exists a surjection with cyclic kernel $M(\emptyset)\to M(1)$.  
That is, $M$ is a matroid over~$R$. 
By Proposition~\ref{p:duality}(a), 
dualizing this matroid and taking the direct sum with the empty matroid
for $\projpart{N'}$ yields a one-matroid over $R$ 
whose objects are $N$ and $N'$.  There thus exists a surjection
with cyclic kernel $N\to N'$.  
\end{proof}

For a complete description of the structure of matroids over~$R$
we must of course treat the axiom~(M).  It turns out there are no
(ring-theoretically) global conditions on such squares, and thus on matroids over~$R$,
further to those imposed by condition~(M1).  

\begin{proposition}\label{p:Dedekind M2}
Let $M(\emptyset)$, $M(1)$, $M(2)$, and $M(12)$ be f.g.\ $R$-modules.
There exist four surjections with cyclic kernels forming a pushout square
\[\xymatrix{
\ar@{}[dr]|{\mbox{\LARGE$\lrcorner$}}
M(\emptyset)\ar[r]\ar[d] & M(1)\ar[d] \\
M(2)\ar[r] & M(12)
}\]
if and only if the same is true after localizing at each maximal prime $\mf m$,
and the source and target of each map satisfy Proposition~\ref{p:Dedekind M1}.
\end{proposition}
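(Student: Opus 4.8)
\emph{Necessity.} This direction is formal. The localization map $R\to R_{\mf m}$ is a flat base change, so by Proposition~\ref{p:base change} (a surjection with cyclic kernel stays one after $\mbox{---}\otimes R_{\mf m}$, and tensoring preserves pushouts) the given pushout square of cyclic-kernel surjections remains such over each $R_{\mf m}$. And the source and target of each of the four maps are linked by a cyclic-kernel surjection, which by Proposition~\ref{p:Dedekind M1} is exactly the assertion that these pairs satisfy that proposition.

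\emph{Sufficiency: reduction to a lifting problem.} Conversely, assume the localized squares exist and the four pairs satisfy Proposition~\ref{p:Dedekind M1}. I will build the two surjections out of $M(\emptyset)$ one factor at a time, mimicking the sufficiency argument of Proposition~\ref{p:Dedekind M1}. Applying that proposition to the pair $(M(\emptyset),M(1))$ yields a cyclic submodule $\langle x\rangle\subseteq M(\emptyset)$ with $M(\emptyset)/\langle x\rangle\cong M(1)$; applying it to $(M(1),M(12))$ yields a cyclic $\langle\bar y\rangle\subseteq M(1)$ with $M(1)/\langle\bar y\rangle\cong M(12)$. The lifts of a chosen generator $\bar y$ to $M(\emptyset)$ are exactly the elements $\tilde y+rx$, $r\in R$. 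If some such lift $y$ satisfies $M(\emptyset)/\langle y\rangle\cong M(2)$, we are done: the square formed by the quotient maps $M(\emptyset)\to M(\emptyset)/\langle x\rangle$, $M(\emptyset)\to M(\emptyset)/\langle y\rangle$ and the induced maps to $M(\emptyset)/\langle x,y\rangle$ is a pushout of cyclic-kernel surjections, and its fourth corner is $\bigl(M(\emptyset)/\langle x\rangle\bigr)/\langle\bar y\rangle=M(1)/\langle\bar y\rangle\cong M(12)$, so all four modules are as required. (As usual, one may first split off from all four modules a common projective summand lifted from $M(12)$, reducing to the case $M(12)$ torsion; this is convenient but not essential.)

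\emph{Sufficiency: solving the lifting problem.} It remains to find the right $r$. One first fixes the rank type of $\langle y\rangle$: it must be free of rank one when $\rk M(\emptyset)>\rk M(2)$ and torsion otherwise, the ranks being unambiguous since $K_0(R)=\Z\oplus\Pic(R)$. In the torsion case, the hypothesis that $\bar y$ maps to $0$ in $\projpart{M(1)}=\projpart{M(\emptyset)}$ modulo the image of $x$ shows there is an essentially unique $r$ making $\tilde y+rx$ torsion; in the free case every lift is already non-torsion. With the rank type pinned down, the determinant works out automatically: $\det\bigl(M(\emptyset)/\langle y\rangle\bigr)=\det M(\emptyset)-\det\langle y\rangle$ equals $\det M(2)$ by the relevant determinant clause of Proposition~\ref{p:Dedekind M1} for $(M(\emptyset),M(2))$ (using that $\det\langle y\rangle$ is determined by the local lengths of $\langle y\rangle$). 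Since a finitely generated module over a Dedekind domain is determined up to isomorphism by its localizations together with its class in $\Pic(R)$ (Proposition~\ref{p:Dedekind structure}), it now suffices to choose $r$, within the allowed family, so that $M(\emptyset)_{\mf m}/\langle y\rangle_{\mf m}\cong M(2)_{\mf m}$ for every maximal $\mf m$. This is automatic away from the finitely many primes supporting the torsion of the $M(A)$; at each such prime the localized pushout square, combined with Lemma~\ref{lem:Dedekind kernels} over $R_{\mf m}$ to transport cyclic kernels, provides a local value of $r$ that works, and each of these finitely many conditions depends only on $r$ modulo a sufficiently high power of the corresponding $\mf m$, so a single global $r$ satisfying all of them exists by the Chinese Remainder Theorem.

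\emph{Main obstacle.} The technical heart is this last step. The localized pushout squares are not handed to us in any mutually compatible way, so one must manufacture a single pair $(x,y)$ in the global module $M(\emptyset)$ that is simultaneously correct at every prime and carries the correct class in $\Pic(R)$; in the rank-dropping directions this genuinely cannot be decided prime by prime, which is precisely the reason the determinant clauses appear in Proposition~\ref{p:Dedekind M1}. Once those global obstructions are absorbed (via Proposition~\ref{p:Dedekind structure} and Lemma~\ref{lem:Dedekind kernels}), the residual, purely torsion, patching is routine. One could alternatively organize the rank-dropping cases by dualizing --- Proposition~\ref{p:duality and base change} shows duality commutes with localization, reducing the case where all four maps drop rank to the all-torsion case and leaving the intermediate cases self-dual --- but the same local--global coordination reappears in that form.
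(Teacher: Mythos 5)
Your necessity argument is fine and agrees with the paper's. The sufficiency argument, however, has a genuine gap at its core: having fixed $x\in M(\emptyset)$ with $M(\emptyset)/\langle x\rangle\cong M(1)$ and $\bar y\in M(1)$ with $M(1)/\langle\bar y\rangle\cong M(12)$ \emph{independently} (each as an unspecified output of Proposition~\ref{p:Dedekind M1}), it is simply not true that some lift $y=\tilde y+rx$ satisfies $M(\emptyset)/\langle y\rangle\cong M(2)$ --- the failure already occurs locally, so no CRT patching can rescue it. Concretely, take $R=\Z$, $M(\emptyset)=\Z/p^3\oplus\Z/p$, $M(1)=\Z/p^2\oplus\Z/p$, $M(2)=\Z/p^3$, $M(12)=\Z/p^2$; the square exists (take $x=(p^2,0)$, $y=(0,1)$) and all hypotheses hold. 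Here $x$ is forced to be $(up^2,0)$ with $u$ a unit, but a perfectly valid choice of $\bar y$ is $(p,1)\in\Z/p^2\oplus\Z/p$, whose quotient is $\Z/p^2\cong M(12)$. Every lift of a generator of $\langle(p,1)\rangle$ to $M(\emptyset)$ has the form $(up+\lambda p^2,u)$, hence generates a subgroup of order $p^2$, so every quotient $M(\emptyset)/\langle y\rangle$ has order $p^2$, never $p^3=|M(2)|$. The underlying error is the appeal to Lemma~\ref{lem:Dedekind kernels} to ``transport cyclic kernels'': that lemma gives an abstract isomorphism between two cyclic kernels, not an automorphism of the ambient module carrying one onto the other, so you cannot move the local pushout data onto your fixed pair $(x_{\mf m},\bar y_{\mf m})$. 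The choices of $x$ and $y$ must be coordinated from the start; that coordination is exactly the delicate content of the sufficiency proof of Proposition~\ref{p:DVR M2}.

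The paper's proof avoids this trap by never forming the composite chain $M(\emptyset)\to M(1)\to M(12)$: it builds the two maps out of $M(\emptyset)$ and \emph{defines} the fourth corner as their pushout. When (say) $M(\emptyset)\to M(2)$ has no rank drop, its kernel lies in the torsion submodule, so the map can be assembled as the direct sum over $\mf m$ of the given local maps $g_{\mf m}$ restricted to torsion --- after conjugating each by the isomorphism $i_{\mf m}(\emptyset)$ that intertwines the globally constructed $\phi:M(\emptyset)\to M(1)$ with the local $f_{\mf m}$. Those intertwiners, supplied by the construction in Proposition~\ref{p:Dedekind M1}, are precisely the ambient automorphisms your argument is missing; when both maps out of $M(\emptyset)$ drop rank, the paper dualizes to reduce to the previous case. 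If you want to keep your lifting formulation, you must choose $\bar y$ as the image of a $y$ produced together with $x$ by such an intertwined local-to-global construction, not as an arbitrary output of Proposition~\ref{p:Dedekind M1}.
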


\begin{proof}
\noindent\emph{Necessity.} Trivial in view of Proposition~\ref{p:Dedekind M1}
and the fact that pushout squares localize to pushout squares.

\noindent\emph{Sufficiency.} Fix a pushout square for each localization;
label its maps as follows.
\[\xymatrix{
\ar@{}[dr]|{\mbox{\LARGE$\lrcorner$}}
M(\emptyset)_{\mf m}\ar^{f_{\mf m}}[r]\ar_{g_{\mf m}}[d] & M(1)_{\mf m}\ar^{g'_{\mf m}}[d] \\
M(2)_{\mf m}\ar_{f'_{\mf m}}[r] & M(12)_{\mf m}
}\]
It is enough to construct two maps
$M(\emptyset)\to M(1)$ and $M(\emptyset)\to M(2)$ which localize correctly
everywhere, for then we may choose $M(12)$ to be their pushout, 
since pushouts localize to pushouts.  

Suppose first that one of $M(1)$ and $M(2)$ has the same rank as $M(\emptyset)$,
without loss of generality that $M(2)$ does.  By Proposition~\ref{p:Dedekind M1},
we may construct a map $\phi:M(\emptyset)\to M(1)$ so that there exist isomorphisms
$i_{\mf m}(A)$ for each prime $\mf m$ and $A=\emptyset, 1$ making the squares
\[\xymatrix{
M(\emptyset)_{\mf m}\ar[r]^{\phi_{\mf m}}\ar[d]_{i_{\mf m}(\emptyset)} 
& M(1)_{\mf m}\ar[d]^{i_{\mf m}(1)} \\
M(\emptyset)_{\mf m}\ar[r]_{f_{\mf m}} 
& M(1)_{\mf m}
}\]
commute.
Now, by the proof of Proposition~\ref{p:Dedekind M1}, we construct $M(0)\to M(2)$
as the direct sum of the restriction of the given $g_{\mf m}$ 
to the torsion submodule of $M(\emptyset)_{\mf m}$, and the identity map 
$\projpart{M(0)}\to\projpart{M(0)}$.  It changes nothing to
precompose each of these restrictions of $g_{\mf m}$ by the corresponding
$i_{\mf m}(\emptyset)$.  Doing this yields a commutative square
\[\xymatrix{
M(\emptyset)_{\mf m}\ar[r]^{\psi_{\mf m}}\ar[d]_{i_{\mf m}(\emptyset)} 
& M(2)_{\mf m}\ar[d] \\
M(\emptyset)_{\mf m}\ar[r]_{g_{\mf m}} 
& M(2)_{\mf m}
}\]
and pasting this square to the last one shows that we have constructed
the two maps $M(\emptyset)\to M(1)$ and $M(\emptyset)\to M(2)$ 
which localize as desired.

The remaining case is the one in which the ranks of $M(1)$ and $M(2)$
are both less than that of $M(\emptyset)$.  In this case, like the second 
case of Proposition~\ref{p:Dedekind M1},
we will proceed via dualization, and then via a similar argument.  
In brief, we may first construct a map
$M^*(2)\to M^*(12)$ which localizes correctly, up to intertwining with some isomorphisms.
Then since the map $M^*(1)\to M^*(12)$ doesn't involve a rank drop,
we may construct it as a direct sum of localizations on the torsion parts
using the same isomorphisms.  This gives us a diagram 
$M^*(1)\to M^*(12)\gets M^*(2)$ which localizes correctly at every maximal prime.
Finally, we may temporarily insert any suitable
module in place of $M^*(\emptyset)$, for instance the pullback,
and then dualize the resulting matroid over~$R$.
Discarding the ersatz $M(12)$ gives us maps 
$M(2)\gets M(\emptyset)\to M(1)$
which localize correctly, as desired.  
\end{proof}

\subsection{Quasi-arithmetic matroids}

If $M$ is a matroid over~$\mathbb{Z}$, then we can define a corank function of $M$
as the corank function of the generic matroid $M\otimes_\Z\Q$ described above, that is
$$\cork(A)=\dim (\projpart{M(A)}).$$
We also define
$$m(A)\doteq |\tors{M(A)}|.$$

\begin{corollary}\label{c:arithmetic}
The triple $(E, \cork, m)$ is a \emph{quasi-arithmetic matroid}, i.e $m$ satisfies the following properties:
\begin{itemize}
    \item[(A1)] Let be $A\subseteq E$ and $b\in E$;
    if $b$ is dependent on $A$, then
    $m(A\cup\{b\})$ divides $m(A)$;
    otherwise
    $m(A)$ divides $m(A\cup\{b\})$;
    \item[(A2b)] if $A\subseteq B\subseteq E$ and $B$ is a disjoint union $B=A\cup F\cup T$
    such that for all $A\subseteq C\subseteq B$  we have $\rk(C)=\rk(A)+|C\cap F|$, then
    $$
    m(A)\cdot m(B) = m(A\cup F)\cdot m(A\cup T).
    $$
\end{itemize}

Furthermore it satisfies the following property:
\begin{itemize}
    \item[(A2a)] if $A,B\subseteq E$ and $\rk(A\cup B)+\rk(A\cap B)=
    \rk(A)+\rk(B)$, then $m(A)\cdot m(B)$ divides $m(A\cup B)\cdot m(A\cap
    B)$
\end{itemize}
\end{corollary}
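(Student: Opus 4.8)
The plan is to reduce every assertion to a statement about $p$-adic valuations, using $m(A)=\prod_p p^{v_p(m(A))}$ with $v_p(m(A))=\mathrm{length}_{\Z_{(p)}}\tors{M_{(p)}(A)}$, where $M_{(p)}\doteq M\otimes_\Z\Z_{(p)}$ is a matroid over the DVR $\Z_{(p)}$ by Proposition~\ref{p:base change}. Write $\cork(A)=\dim_\Q(M(A)\otimes\Q)$; since $\Z$ is a principal ideal domain, $M(A)=\tors{M(A)}\oplus\projpart{M(A)}$ with $\projpart{M(A)}\cong\Z^{\cork(A)}$, and for every $n$ exceeding the length of each torsion direct summand of each $M_{(p)}(A)$ among the finitely many sets $A$ at hand one has $v_p(m(A))=d_{\leq n}(M_{(p)}(A))-n\,\cork(A)$, in the notation of Section~\ref{sec:DVR}. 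Axiom~(A1) is then immediate from~(M1), which furnishes an exact sequence $0\to(x)\to M(A)\to M(Ab)\to 0$ with $(x)$ cyclic: if $b$ is dependent on~$A$ then $(x)$ is torsion, so $(x)\subseteq\tors{M(A)}$ and $\tors{M(Ab)}\cong\tors{M(A)}/(x)$, whence $m(Ab)\mid m(A)$; if $b$ is independent then $(x)\cong\Z$ meets $\tors{M(A)}$ trivially, so $\tors{M(A)}$ injects into $\tors{M(Ab)}$ and $m(A)\mid m(Ab)$.

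For~(A2a), recall from the proof of Proposition~\ref{p:DVR M2} that $A\mapsto d_{\leq n}(M_{(p)}(A))$ is, up to an additive constant, the corank function of the classical polymatroid $M\otimes\Z/p^n$, and is therefore supermodular on all pairs: $d_{\leq n}(M_{(p)}(A))+d_{\leq n}(M_{(p)}(B))\le d_{\leq n}(M_{(p)}(A\cup B))+d_{\leq n}(M_{(p)}(A\cap B))$. Since $\cork$ is affine in $-\rk$, the hypothesis $\rk(A\cup B)+\rk(A\cap B)=\rk(A)+\rk(B)$ is the same as $\cork(A\cup B)+\cork(A\cap B)=\cork(A)+\cork(B)$; subtracting $n$ times this identity from the supermodularity inequality cancels the correction terms and gives $v_p(m(A))+v_p(m(B))\le v_p(m(A\cup B))+v_p(m(A\cap B))$ for every prime~$p$, i.e.\ the asserted divisibility.

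For~(A2b), the hypothesis says exactly that $\cork(C)=\cork(A)-|C\cap F|$ for all $A\subseteq C\subseteq B$. Fix a prime~$p$ and $n\gg 0$, write $h(C)=d_{\leq n}(M_{(p)}(C))$, and observe as in~(A2a) that the $\cork$-corrections cancel, so that it suffices to prove $h(A)+h(B)=h(A\cup F)+h(A\cup T)$. The key input is the sharp case~(L2b): for any $C$ with $A\subseteq C\subseteq B$ and any $b\in F$, $c\in T$ with $b,c\notin C$ and $C\cup\{b,c\}\subseteq B$, apply Proposition~\ref{p:DVR M2} to the two-element minor of $M_{(p)}$ on $\{b,c\}$ obtained by contracting~$C$; for $n$ large $d_n(M_{(p)}(Cb))=\cork(Cb)=\cork(C)-1\ne\cork(C)=\cork(Cc)=d_n(M_{(p)}(Cc))$, so~(L2b) forces $h(C)+h(Cbc)=h(Cb)+h(Cc)$, i.e.\ $h(C)-h(Cc)=h(Cb)-h(Cbc)$. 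Thus the change in~$h$ produced by adjoining a given element of~$T$ is unchanged if one first adjoins an element of~$F$, hence (iterating) if one first adjoins all of~$F$; telescoping the equal quantities $h(A)-h(A\cup T)$ and $h(A\cup F)-h(B)$ along a fixed ordering of the elements of~$T$ then yields $h(A)+h(B)=h(A\cup F)+h(A\cup T)$.

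The main obstacle is~(A2b): unlike the divisibility~(A2a), which follows from the coarse submodularity in~(L2a), the exact equality needs the equality case~(L2b), and one must check that the rank condition of~(A2b) passes to all the two-element minors used while commuting the elements of~$T$ past those of~$F$ (this is where the disjointness of $A$, $F$, $T$ and the ``for all $C$'' in the hypothesis are needed). Secondary technical points are the normalization $v_p(m(A))=d_{\leq n}(M_{(p)}(A))-n\,\cork(A)$ for $n\gg0$ and the supermodularity on all pairs of the polymatroid corank functions (equivalently, the propagation of local submodularity to all pairs); both are routine but should be set up before the valuation bookkeeping.
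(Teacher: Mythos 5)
Your proposal is correct and takes essentially the same approach as the paper; the paper's proof is just the one-line observation that the result is immediate from Propositions~\ref{p:DVR M1}, \ref{p:DVR M2}, \ref{p:Dedekind M1} and~\ref{p:Dedekind M2} together with the triviality of~$\Pic(\Z)$. Your argument spells out exactly the derivation the paper leaves implicit: localize at each prime~$p$, read off~(A1) from the cyclic-kernel condition~(M1), (A2a) from the submodularity condition~(L2a) for the localized polymatroids, and~(A2b) from the equality case~(L2b) via a telescoping argument.
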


\begin{proof}
Since $\Pic(\mathbb{Z})$ is trivial, this is immediate from Propositions \ref{p:DVR M1}, \ref{p:DVR M2}, \ref{p:Dedekind M1} and~\ref{p:Dedekind M2}. 
\end{proof}

This corollary establishes that
matroids over~$\mathbb{Z}$ recover many of the essential features of
the second author's theory of \emph{arithmetic matroids} from \cite{D'Adderio-Moci}.

\begin{remark}\label{rem:quasi}
An arithmetic matroid is an object satisfying
all the axioms of a quasi-arithmetic matroid plus a further one,
namely the positivity property~(P) of~\cite{Branden-Moci}.
The axioms of quasi-algebraic matroids are arithmetic ones, pertaining to integer divisibility,
whereas (P) has geometric motivation.  
To be precise, (P) is included as an axiom
in order to force positivity of the arithmetic Tutte polynomial $M_A(x,y)$.
Its geometric nature is that the numbers whose positivity it demands are,
in the realizable case, numbers of components of certain strata in
the corresponding toric arrangement.  
Its coefficients also have 
two natural but nontrivial combinatorial interpretations \cite{D'Adderio-Moci, Branden-Moci}.

The additional property (A2a) appeared in an earlier choice of the axioms \cite[first arXiv version]{D'Adderio-Moci}.
\end{remark}

In fact quasi-arithmetic matroids and matroids over $\mathbb{Z}$ are not truly equivalent, in that the information contained in the latter is richer:
it retains isomorphism classes of torsion groups, not just their cardinalities.

\section{The Tutte-Grothendieck ring}\label{sec:Grothendieck}

In this section we continue to let $R$ be a Dedekind domain.
All matroids over~$R$ in this section are essential.  
The word ``matroid'' will mean ``matroid over~$R$'' from here
through the end of the proof of Lemma~\ref{lem:G molecule}, 
except when we speak of a generic matroid.

As we defined the operations of deletion and contraction in Section~\ref{sec:definition}, 
any element may be deleted or contracted.  However,
if $a\in E$ is a generic coloop, then $M\setminus a$ is not essential,
so we will disallow these deletions here.
Dually, we will exclude the case of contracting a generic loop.

Essentially following Brylawski~\cite{Brylawski},
define the \emph{Tutte-Grothendieck ring} of matroids over $R$,
which we here denote $K(\Mat{R})$, to be
the ring whose underlying abelian group generated by a symbol $\T_M$ for each unlabelled
essential matroid $M$ over~$R$ with nonempty ground set, modulo the relations
\[ \T_M = \T_{M\setminus a} + \T_{M/a} \]
whenever $a$ is not a generic loop or coloop;
and whose multiplication is given by linear extension from the relation
\[\T_M\cdot \T_{M'} = \T_{M \oplus M'}.\]

By ``unlabelled'', we mean that we consider two matroids $M$ and~$M'$ over~$R$ to be
identical if there is a bijection $\sigma:E\stackrel\sim\to E'$ of their ground sets such
that $M(A)\cong M'(\sigma(A))$ for each subset $A$ of~$E$.

The ring $K(\Mat{R})$ turns out to be best understood 
in terms of the monoid ring of the monoid of $R$-modules under direct sum,
as in Theorem~\ref{th:Grothendieck} below.  This however
only identifies a ring which $K(\Mat{R})$ injects into;
the precise description of the image is given in
Corollary~\ref{cor:what the ring is exactly}.

Define $\Z[\fgMod{R}]$ to be the ring with a $\Z$-linear basis 
$\{u^N\}$ with an element $u^N$ for each f.g.\ $R$-module $N$ up to isomorphism,
and product given by $u^Nu^{N'} = u^{N\oplus N'}$.

\begin{theorem}\label{th:Grothendieck}
The Tutte-Grothendieck ring injects into 
\[\Z[\fgMod{R}]\otimes \Z[\fgMod{R}],\]
in such a way that for every matroid $M$ over $R$,
\begin{equation}\label{eq:T}
\T_M \mapsto \sum_{A\subseteq E} X^{M(A)} Y^{M^*(E\setminus A)}
\end{equation}
where $\{X^N\}$ and $\{Y^N\}$ are the respective bases of the two 
tensor factors $\Z[\fgMod{R}]$.
\end{theorem}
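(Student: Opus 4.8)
The map in \eqref{eq:T} comes from the assignment $U(M)\doteq\sum_{A\subseteq E}X^{M(A)}Y^{M^*(E\setminus A)}$, a well-defined element of $\Z[\fgMod{R}]\otimes\Z[\fgMod{R}]$ for each essential matroid $M$ over~$R$ on ground set $E$ (here $M^*$ exists and is essential by Lemma~\ref{p:dual well-defined} and Corollary~\ref{cor:dual essential}). The plan is to show (i) $\T_M\mapsto U(M)$ descends to a ring homomorphism $K(\Mat{R})\to\Z[\fgMod{R}]\otimes\Z[\fgMod{R}]$, and (ii) this homomorphism is injective; (i) is a direct computation, (ii) carries the weight. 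For (i), multiplicativity on direct sums is immediate, since $(M\oplus M')^*=M^*\oplus M'^*$ by Proposition~\ref{p:duality2}(c), so the $A\amalg A'$-term of $U(M\oplus M')$ is $X^{M(A)\oplus M'(A')}Y^{M^*(E\setminus A)\oplus M'^*(E'\setminus A')}$, which is the product of the $A$-term of $U(M)$ and the $A'$-term of $U(M')$ under $X^{N}X^{N'}=X^{N\oplus N'}$ (and likewise for $Y$). For additivity over the relation $\T_M=\T_{M\setminus a}+\T_{M/a}$ (imposed when $a$ is neither a generic loop nor a generic coloop, in which case $M\setminus a$ and $M/a$ are essential), split the sum defining $U(M)$ by whether $a\in A$. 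When $a\notin A$: $M(A)=(M\setminus a)(A)$, and since $a\in E\setminus A$, Proposition~\ref{p:duality2}(a) gives $M^*(E\setminus A)=(M^*/a)\big((E\setminus a)\setminus A\big)=(M\setminus a)^*\big((E\setminus a)\setminus A\big)$; these terms assemble to $U(M\setminus a)$. When $a\in A=A'\cup\{a\}$ with $A'\subseteq E\setminus a$: $M(A)=(M/a)(A')$ and $M^*(E\setminus A)=M^*\big((E\setminus a)\setminus A'\big)$, and since $a$ is not a generic loop the kernel of $M(\emptyset)\to M(\{a\})$ has rank $1$, hence is $\cong R$, so $M(\{a\})$ is a quotient of $M(\emptyset)$ by $R$ and Proposition~\ref{p:duality2}(b) gives $(M/a)^*=M^*\setminus a$, whence $M^*(E\setminus A)=(M/a)^*\big((E\setminus a)\setminus A'\big)$; these terms assemble to $U(M/a)$. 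Thus $U(M)=U(M\setminus a)+U(M/a)$.

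For (ii), call a matroid over~$R$ \emph{reduced} if each of its elements is a generic loop or a generic coloop. Iterating deletion-contraction expresses every $\T_M$ as a nonnegative-integer combination of classes of reduced matroids --- the process never stalls on an empty ground set, since a one-element matroid is already reduced --- so the reduced classes span $K(\Mat{R})$. Hence injectivity of the homomorphism follows once the elements $U(N)$, for $N$ reduced, are known to be $\Z$-linearly independent in $\Z[\fgMod{R}]\otimes\Z[\fgMod{R}]$: indeed, writing any $\xi$ in the kernel as $\xi=\sum_N c_N\,\T_N$ over reduced $N$, we get $\sum_N c_N\,U(N)=0$, so all $c_N=0$ and $\xi=0$. (As a byproduct this shows the reduced classes form a basis of $K(\Mat{R})$, which is the content of Lemma~\ref{lem:G molecule}.) So the entire problem reduces to the linear independence of $\{U(N):N\text{ reduced}\}$.

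That independence is the step I expect to be the main obstacle, and I would attack it by a triangularity argument. First, apply the rank specialization $X^P\mapsto x^{\rk P}$, $Y^P\mapsto y^{\rk P}$: by Corollary~\ref{cor:dual} (using $\rk N^*(E\setminus A)=\rk N(A)+|A|-\rk N(\emptyset)$) this sends $U(N)$ to the value at $(x+1,y+1)$ of the Tutte polynomial of the generic matroid of $N$, which stratifies the reduced matroids by ground-set size and generic rank. Within a stratum, one peels off ``corner'' monomials: for instance $X^{N(\emptyset)}Y^{\tors{N(\emptyset)}}$ is the $A=\emptyset$ term of $U(N)$, and --- since every $N(A)$ is a quotient of $N(\emptyset)$, and since any $A\ne\emptyset$ with $N(A)\cong N(\emptyset)$ would force $N^*(E\setminus A)$ to have positive rank --- it occurs in $U(N)$ with coefficient exactly $1$ and occurs in $U(N')$ (within the stratum) only when $N'(\emptyset)\cong N(\emptyset)$; symmetrically the $A=E$ term recovers $N(E)$. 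Iterating this corner-peeling on minors (by induction on ground-set size), and tracking determinants of modules when $\Pic(R)$ is nontrivial, should yield the required triangularity, and hence the linear independence. The structure theorems of Section~\ref{sec:global} (Propositions~\ref{p:Dedekind M1} and~\ref{p:Dedekind M2}), which say precisely which pairs of modules and which pushout squares can occur, are the input that keeps this case analysis of reduced matroids finite and tractable.
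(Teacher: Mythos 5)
Your verification that the assignment $\T_M\mapsto U(M)$ descends to a ring homomorphism is correct and matches the paper's. The gap is in step (ii): the set $\{U(N):N\text{ reduced}\}$ is \emph{not} $\Z$-linearly independent, so your whole plan for injectivity rests on a false claim. Reduced matroids in your sense are exactly the molecules of the paper, and the paper's own spanning lemma is built from explicit $\Z$-linear relations among molecule classes. Concretely, take any two-element matroid $M'$ over $R$ whose generic matroid is $U_{1,2}$ (the proof of Lemma~\ref{lem:G atom}(a) manufactures plenty of these). Neither $1$ nor $2$ is a generic loop or coloop of $M'$, so $K(\Mat{R})$ contains the relation $[M'\setminus 1]+[M'/1]=[M'\setminus 2]+[M'/2]$, and all four minors are one-element matroids, hence reduced; applying $\T$ gives a nontrivial linear relation among the $U(N)$ for reduced $N$. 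Your corner-peeling heuristic doesn't see this: already over $R=\Z$ the one-element loops $\Z/8\twoheadrightarrow\Z/4$ and $\Z/8\twoheadrightarrow 0$ (the latter is $K_{\Z/8}$) have the same $A=\emptyset$ monomial $(XY)^{\Z/8}$, and since they have no nonempty proper minors to recurse on, your inductive triangularity has nothing to triangulate over. Likewise your parenthetical ``as a byproduct this shows the reduced classes form a basis'' misreads Lemma~\ref{lem:G molecule}, which proves only that molecule classes lie in the span of a much smaller set $\mc I$.

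The paper instead singles out the very restricted family $\mc I$ of equation~\eqref{eq:G I}: loops $K_N$, together with direct sums of a torsion empty matroid, at most one $L_{\mc E}$ with copies of $L_0$, and at most one $L^*_{\mc F}$ with copies of $L^*_0$. These have unique bases, so each $\T_N$ ($N\in\mc I$) has a well-defined leading monomial under a suitable monomial order on $\Z[\fgMod{R}]\otimes\Z[\fgMod{R}]$, and linear independence of $\{U(N):N\in\mc I\}$ is proved by identifying those leaders and, in the one remaining degenerate stratum, by a divisibility argument on annihilator ideals. The bulk of the work is then the other direction --- Lemmas~\ref{lem:G atom} and~\ref{lem:G molecule}, which show that the classes of $\mc I$ span $K(\Mat{R})$ by repeatedly embedding a given (one-element, then molecular, then arbitrary) matroid into an auxiliary matroid on more elements and comparing two deletion-contraction expansions. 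Your spanning step (molecules span) is a correct but much cheaper statement, and it cannot be upgraded to a basis; the independence step must be done for the genuinely smaller $\mc I$, and that is where the real content of the theorem lies.
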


As a point of notation, we will allow ourselves the abbreviation $(XY)^N$ for $X^NY^N$.

We immediately compare Theorem~\ref{th:Grothendieck} with the case of
matroids over a field, where the Tutte-Grothendieck invariant is
the familiar Tutte polynomial $\T_M$; 
in Section~\ref{ssec:arithmetic Tutte}
we will relate it to other known invariants.
If $R$ is a field, then $\Z[\fgMod{R}]$ is the univariate polynomial
ring $\Z[u]$, and then $\Z[\fgMod{R}]\otimes\Z[\fgMod{R}]$ 
is, appropriately, a bivariate polynomial ring.  
If we call the generators of the two tensor factors $x-1$ and~$y-1$
rather than $X$ and~$Y$,
then equation~\eqref{eq:T} in fact gives the classical Tutte polynomial,
since $\dim M(A)$ is the corank of~$A$ and $\dim M^*(E\setminus A)$ is its nullity.

\begin{remark}
We have excluded empty matroids from the definition
of~$K(\Mat{R})$ because there are no linear relations relating them
to matroids with nonempty ground set: the unique element
in a matroid on one element, whence one might get a relation, must be a loop or coloop.
Thus, constructing the Tutte-Grothendieck ring in the presence of 
zero-element matroids would yield a ring which would,
in a minimal fashion, fail to be a domain or to
inject into $\Z[\fgMod{R}]\otimes \Z[\fgMod{R}]$.
Applying equation~\eqref{eq:T} to 
a matroid $M$ on zero elements yields the monomial
$X^{M(\emptyset)}Y^{M^*(\emptyset)}$.
But sums of such monomials can also be achieved as sums of polynomials
$\T_M$ for nonempty $M$, and these cannot be equal in $K(\Mat{R})$.
However, if classes $\T_M$ for empty matroids are defined via 
equation~\eqref{eq:T}, these classes behave correctly under 
the multiplication of~$K(\Mat{R})$.
%
\end{remark}

Since decomposing a matroid $M$ over a ring into $M\setminus i$ and $M/i$ is
not a unique decomposition in the sense of~\cite{Brylawski},
and the irreducibles for direct sum are not all single-element matroids,
Theorem~\ref{th:Grothendieck} does not follow directly from the 
bidecomposition methods of~\cite{Brylawski}, and we must
prove it by hand.

For the proof it will be useful to have some explicit understanding
of the ring $\Z[\fgMod{R}]\otimes \Z[\fgMod{R}]$.
Proposition~\ref{p:Dedekind structure}
implies that $\Z[\fgMod{R}]$ has one generator
$u^{R/\mf m^k}$ for each maximal ideal $\mf m$ and integer $k>0$,
and no relations involving these, together with one generator $u^P$ for each rank~1
projective module~$P$, among which there are many relations.
Indeed, the subring $\Z[u^P]$ embeds in the group ring of the Picard
group with one more variable $u^R$ adjoined.  

\begin{proof}[Proof of Theorem~\ref{th:Grothendieck}]
To be concise, let $S$ be the ring $\Z[\fgMod{R}]\otimes \Z[\fgMod{R}]$.
To keep distinct the objects which we have not yet proven isomorphic,
let $[M]$ represent the class of $M$ in $K(\Mat{R})$, reserving $\T_M$ for the
element of~$S$ defined in~\eqref{eq:T}.  

Consider the map $\T : K(\Mat{R})\to S$ given by 
$\T([M]) = \T_M$.  
We have that $\T$ is a homomorphism of rings,
because the deletion-contraction relations
and multiplicativity relations hold among the various $\T_M$.  
Both of these are straighforward to check, and
correspond to easy operations on equation~\eqref{eq:T}.
The deletion-contraction relation on an element~$a$ 
is proved by splitting the sum into one sum containing
the terms with $a\not\in A$ and another containing the terms with $a\in A$.
Multiplicativity under direct sum is proved by expanding the product of 
equation \eqref{eq:T} for $M$ and~$M'$, and collecting into a single
sum over $A\amalg A'\subseteq E\amalg E'$.  


With that, we come to the involved part of the proof,
which is to show $\T$ an injection.  
Our approach will be to construct a family 
$\mc I$ of matroids $M$ whose polynomials $\T_M$
are linearly independent in~$S$, 
and use deletion-contraction
relations to expand every matroid in terms of $\mc I$.
This will allow every linear relation among the $\T_M$ to
be lifted to a relation among the $[M]$ via expansion
in terms of~$\mc I$, proving injectivity.
We will moreover be able to conclude that 
the image of~$\T$ is the span of the images of the matroids in~$\mc I$.

As we use the deletion-contraction relations,
we will make frequent use of induction on the size of the ground set.
In fact, our main technique will be to embed a matroid $M$ as a minor of another, $M'$, 
and then relate $M$ to another minor of $M'$ of the same size plus a collection of
smaller minors.  
But if the ground set has size~1, this will not be as useful: the unique element
of a 1-element matroid is necessarily either a loop or a coloop, 
hence we cannot get construct a deletion-contraction relation 
involving a smaller matroid as a minor.  This will be our base case,
and require a different argument.  
We have broken out the arguments expanding these matroids in terms of $\mc I$
into two lemmas, Lemma~\ref{lem:G atom} and Lemma~\ref{lem:G molecule}.

The following construction is relevant to both cases.  
Linearly extend the divisibility relation on the ideals of~$R$
to a total order $\leq$ such that for ideals $I,J,K$,
$I\leq J$ implies $IK\leq JK$.
For each class $\mc E\in\Pic(R)$,
let $N_{\mc E}$ equal $R/I$, where $I$ is the $\leq$-least ideal of~$R$ 
whose determinant is $\mc E^{-1}$.
This produces a fixed cyclic torsion module $N_{\mc E}$ representing 
each class $\mc E\in\Pic(R)$.  Note that every submodule $N'$ of $N_{\mc E}$
is also the representative of its own class, $N' = N_{[N']}$.
Define the single-element matroid
$L_{\mc E}$ by $L_{\mc E}(\emptyset) = N_{\mc E}$ and 
$L_{\mc E}(1) = 0$.
The dual matroid $L^*_{\mc E}$ therefore has
$L^*_{\mc E}(\emptyset) = R$ and $L^*_{\mc E}(1) = N_{\mc E}$.  
($L$ and $L^*$ can be taken to stand for ``loop'' and ``coloop''.)
Also, let $\emptyset_N$ be the empty matroid associated to
a torsion $R$-module $N$.

For a torsion module $N$, define a second sort of loop $K_N$
by taking $K_N(\emptyset) = N$ and $K_N(1)$ to be the quotient of~$N$
by its largest invariant factor.  

We construct the set $\mc I$ as follows. 
\begin{equation}\label{eq:G I}\begin{aligned}
\mc I &= \{K_N : \mbox{$N$ is torsion}\}
\\&\cup \{\emptyset_N\oplus L_{\mc E}\oplus L_0^{\oplus a} : 
\mbox{$N$ is torsion},\, \mc E\in\Pic(R), a\geq0\}
\\&\cup \{\emptyset_N\oplus L^*_{\mc F}\oplus (L^*_0)^{\oplus b} : 
\mbox{$N$ is torsion},\, \mc F\in\Pic(R), b\geq0\}
\\&\cup \{\emptyset_N\oplus L_{\mc E}\oplus L_0^{\oplus a}\oplus L^*_{\mc F}\oplus (L^*_0)^{\oplus b} : 
\mbox{$N$ is torsion},\, \mc E,\mc F\in\Pic(R), a,b\geq0\}
\end{aligned}\end{equation}

To analyze linear relations in~$\mc I$, we
give the ring $S$ a monomial order wherein, if $P$ and $Q$ are rank~1
projective modules, then $X^P$ is greater than $Y^Q$, which in turn is
greater than $X^N$ or $Y^N$ for any torsion module $N$.
  
Then if $M$ is a matroid with a unique basis, as all the matroids in~$\mc I$ are,
the initial term of $\T_M$ is the term contributed to the sum in~\eqref{eq:T}
by the complement of the unique basis of~$M$.  For the matroid
\[M=\emptyset_N\oplus L_{\mc E}\oplus L_0^{\oplus a}\oplus L^*_{\mc F}\oplus (L^*_0)^{\oplus b}\]
the complement of the unique basis is sent to $N\oplus P\oplus R^b$,
where $P$ is the rank~1 projective module whose determinant is $\mc F\in\Pic(R)$.
For the dual of this matroid, the analogous module is $N\oplus Q\oplus R^a$
where $Q$ is the rank~1 projective whose determinant is $\mc E$.
Therefore the initial term of $\T_M$ is 
$X^{N\oplus P\oplus R^b}Y^{N\oplus Q\oplus R^a}$.
Similarly, if instead of~$M$ we had taken one of the matroids from the two 
previous lines in~$\mc I$, the initial term
would be just $X^{N\oplus P\oplus R^b}$ or $Y^{N\oplus Q\oplus R^a}$, respectively.  
All the monomials in these three classes are distinct.

Finally, the initial term of $\T_{K_N}$ is $Y^Q$ times monomials
corresponding to torsion modules, for some rank~1 projective $Q$.  
It follows that any nontrivial $\Z$-linear relation 
among the classes of elements of $\mc I$ 
may contain only these matroids and others of smaller leading terms:
that is, it may involve only one-element matroids whose unique element is a loop.

Temporarily let $\mc I_1$ be the set of the matroids $K_N$, and
$\mc I_2$ the set of matroids of form $\emptyset_N\oplus L_{\mc E}$,
so that together every matroid in~$\mc I$ whose unique element is a loop
is in $\mc I_1$ or $\mc I_2$.
Suppose there was a nontrivial linear dependence among the classes of these matroids.
(The sets $\mc I_1$ and $\mc I_2$ share some elements, but since we have taken their union as \emph{sets}
this is not a problem.)  The class of each matroid in $\mc I_1\cup\mc I_2$
is of the form 
$(XY)^N + Y^Q(XY)^{N'}$
where $N$ and~$N'$ are
torsion $R$-modules, and $Q$ is a rank 1 projective module.
Moreover, there is only one element of $\mc I_1$ 
and one of~$\mc I_2$ with a given value of~$N$.  Therefore, if there is any linear relation, 
there must be a minimal one of the form
\begin{equation}\label{eq:G I relation}
\sum_{j=1}^k[M_{1,j}] - \sum_{j=1}^k [M_{2,j}]=0
\end{equation}
where $M_{i,j}\in\mc I_i$, all $M_{i,j}(\emptyset)$ have the same determinant in~$\Pic(R)$,
and $M_{1,j}(\emptyset) = M_{2,j}(\emptyset)$.  The equality also implies that
the product of the annihilators in~$R$ of the kernels of $M_{1,j}(\emptyset)\to M_{1,j}(1)$
equals the corresponding product of annihilators for the kernels of $M_{2,j}(\emptyset)\to M_{2,j}(1)$.
All of these annihilators have the same determinant.
The latter product is $I^k$, where $I$ is the annihilator of $N_{\mc E}$.
Therefore, at least one of the ideals in the former product must be less
than or equal to $I$ in $\leq$ order.  But $I$ is the $\leq$-minimal ideal
of its class, and so these ideals must all equal $I$, so that all
coefficients on the left side of \eqref{eq:G I relation} are zero
and the relation is trivial.  Thus $\mc I$ is dependent, as claimed.
\end{proof}

We may now describe the image of~$K(\Mat{R})$ within the ring $\Z[\fgMod{R}]\otimes \Z[\fgMod{R}]$.
Two constraints on the monomials that may appear can be extracted from Corollary~\ref{cor:dual}.

\begin{corollary}\label{cor:constraints on T}\ 
\begin{enumerate}\renewcommand{\labelenumi}{(\alph{enumi})}
\item If $X^NY^{N'}$ is a term of~$\T_M$, then $\tors{N} = \tors{N'}$.
\item Consider the ring homomorphism 
\[\cl:\Z[\fgMod{R}]\otimes \Z[\fgMod{R}]\to \Z[u^{\mc E} : \mc E\in\Pic(R)]\]
given by 
$\cl(X^N) = u^{[N] - [\tors{N}]}$ and $\cl(Y^N) = u^{[N]}$.
Then $\cl(\T_M)$ is a scalar multiple of $u^{\cl(M)}$.
\end{enumerate}
\end{corollary}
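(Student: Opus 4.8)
The plan is to derive both statements directly from the term-by-term structure of $\T_M$ supplied by Theorem~\ref{th:Grothendieck}, namely $\T_M = \sum_{A\subseteq E} X^{M(A)}Y^{M^*(E\setminus A)}$. The first thing I would record is that every one of these $2^{|E|}$ summands enters with coefficient $1$, so no cancellation can occur when they are collected; hence a monomial $X^NY^{N'}$ is a term of $\T_M$ precisely when there is some $A\subseteq E$ with $N\cong M(A)$ and $N'\cong M^*(E\setminus A)$. Granting this, part~(a) is immediate: Corollary~\ref{cor:dual} asserts $\tors{M^*(E\setminus A)}\cong\tors{M(A)}$, so $\tors{N'}\cong\tors{N}$.

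For part~(b), I would first note that $\cl$ is a well-defined ring homomorphism because both $\det(-)$ and $N\mapsto\projpart{N}$ are additive with respect to $\oplus$ (the latter since $\tors{N\oplus N'}=\tors{N}\oplus\tors{N'}$), so $\cl(X^N)\cl(X^{N'}) = \cl(X^{N\oplus N'})$ and likewise for $Y$. Then I would evaluate $\cl$ on a single term:
\[\cl\bigl(X^{M(A)}Y^{M^*(E\setminus A)}\bigr) = u^{\det(\projpart{M(A)})}\,u^{\det(M^*(E\setminus A))} = u^{\det(\projpart{M(A)})+\det(M^*(E\setminus A))}.\]
Writing $\det(M^*(E\setminus A)) = \det(\projpart{M^*(E\setminus A)}) + \det(\tors{M^*(E\setminus A)})$ and using part~(a) to rewrite the torsion contribution as $\det(\tors{M(A)})$, the exponent becomes exactly $\det(\projpart{M(A)}) + \det(\projpart{M^*(E\setminus A)}) + \det(\tors{M(A)})$, which is the quantity $\cl(M)$ of Proposition~\ref{p:cl(M)} and in particular does not depend on $A$. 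Summing over all $A$ then gives $\cl(\T_M) = 2^{|E|}\,u^{\cl(M)}$, proving the claim with the scalar identified explicitly.

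I do not anticipate a genuine obstacle here: both parts are formal consequences of Corollary~\ref{cor:dual} and Proposition~\ref{p:cl(M)} applied summand by summand. The only points needing a little care are the no-cancellation remark, which is what makes ``term of $\T_M$'' usable, and the bookkeeping that the exponents in the target of $\cl$ live in $\Pic(R)$ --- i.e.\ that one should read $u^{[N]-[\tors{N}]}$ as $u^{\det(\projpart{N})}$ --- so that Proposition~\ref{p:cl(M)} can be quoted verbatim.
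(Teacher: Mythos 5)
Your proof is correct and follows essentially the same route as the paper: part~(a) is read off from the identity $\tors{M^*(E\setminus A)}\cong\tors{M(A)}$ of Corollary~\ref{cor:dual}, and part~(b) by evaluating $\cl$ on a single summand and recognizing the exponent as the $A$-independent quantity $\cl(M)$ of Proposition~\ref{p:cl(M)}. The extra care you take — noting that all coefficients in $\T_M$ are positive so no cancellation can hide a term, checking that $\cl$ is a ring homomorphism, and flagging that $u^{[N]-[\tors{N}]}$ should be read as $u^{\det(\projpart{N})}$ — are worthwhile clarifications, and your explicit identification of the scalar as $2^{|E|}$ is a small improvement on the paper's statement.
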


\begin{proof}
Part~(a) is immediate from the preservation of torsion parts under duality in Corollary~\ref{cor:dual},
and part~(b) from the equality
\[\cl(M) = \det(M(A)) + \det(M^*(E\setminus A))- \det(\tors{M(A)}).\qedhere\]
\end{proof}

Examination of the classes of matroids in~$\mc I$
in the proof of Theorem~\ref{th:Grothendieck}
shows that they span the subring of polynomials
compatible with Corollary~\ref{cor:constraints on T}(a).
Thus we conclude the following.
\begin{corollary}\label{cor:what the ring is exactly}
If empty matroids are included,
the ring $\K(\Mat{R})$ is the subring of $\Z[\fgMod{R}]\otimes \Z[\fgMod{R}]$
generated by the symbols $X^P$ and $Y^P$ as $P$ ranges over rank~1 projective modules,
and $(XY)^N$ as $N$ ranges over torsion modules.
\end{corollary}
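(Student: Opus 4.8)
The plan is to identify, inside $S=\Z[\fgMod{R}]\otimes\Z[\fgMod{R}]$, the subring $\mc S_0$ generated by the elements in the statement, and then match $\mc S_0$ with the image of $K(\Mat{R})$ under the embedding of Theorem~\ref{th:Grothendieck}. First I would make $\mc S_0$ explicit. Since $S$ is commutative, any product of the listed generators equals $X^{P}Y^{P'}(XY)^{N}=X^{P\oplus N}Y^{P'\oplus N}$ for some projective modules $P,P'$ (possibly $0$) and a torsion module $N$; conversely, by Proposition~\ref{p:Dedekind structure} every f.g.\ $R$-module splits as the sum of its torsion submodule and a projective module, so these products range over exactly the monomials $X^{N_1}Y^{N_2}$ with $\tors{N_1}\cong\tors{N_2}$. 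Thus $\mc S_0$ is precisely the $\Z$-span of those monomials.

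Next I would verify the inclusion $\subseteq$. By Theorem~\ref{th:Grothendieck}, $\T_M=\sum_{A\subseteq E}X^{M(A)}Y^{M^*(E\setminus A)}$, and by Corollary~\ref{cor:constraints on T}(a), $\tors{M^*(E\setminus A)}\cong\tors{M(A)}$ for every $A$; hence each monomial occurring in $\T_M$ is one of the spanning monomials of $\mc S_0$. For an empty matroid $\emptyset_N$ with $N$ torsion one has $\T_{\emptyset_N}=(XY)^N\in\mc S_0$ directly. So the image of $K(\Mat{R})$, with the empty-matroid classes adjoined, is contained in $\mc S_0$.

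For the reverse inclusion $\supseteq$ I would invoke the construction inside the proof of Theorem~\ref{th:Grothendieck}: there one builds a family $\mc I$ of essential matroids whose classes $\T_M$ form a $\Z$-basis of the image of $K(\Mat{R})$, together with a monomial order on $S$. Examining the four families constituting $\mc I$, their leading monomials are: $X^{N_1}Y^{N_2}$ with $\rk N_1,\rk N_2\ge 1$ and $\tors{N_1}\cong\tors{N_2}$, from the matroids $\emptyset_N\oplus L_{\mc E}\oplus L_0^{\oplus a}\oplus L^*_{\mc F}\oplus(L^*_0)^{\oplus b}$; $X^{N_1}Y^{\tors{N_1}}$ with $\rk N_1\ge1$ from the coloop-only family; $X^{\tors{N_2}}Y^{N_2}$ with $\rk N_2\ge1$ from the loop-only family; and, together with the empty matroids, all remaining (pure-torsion) monomials $(XY)^N$. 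As $N$, $\mc E$, $\mc F$, $a$, $b$ vary, these leading monomials exhaust the monomial basis of $\mc S_0$ found above. Since every $\T_M$ is itself supported on $\mc S_0$, a triangular reduction with respect to the monomial order expresses each basis monomial of $\mc S_0$ as a $\Z$-combination of these classes — where two matroids happen to share a leading monomial (as for the matroids $K_N$, treated via the auxiliary sets $\mc I_1,\mc I_2$ in the proof of Theorem~\ref{th:Grothendieck}), either may be used. Hence $\mc S_0$ lies in the image, so the two coincide.

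The step I expect to be the main obstacle is the bookkeeping just described: checking that the leading monomials of the four families of $\mc I$, as the parameters $N$ and $\mc E,\mc F\in\Pic(R)$ and $a,b\ge0$ run, really do cover every monomial $X^{N_1}Y^{N_2}$ with $\tors{N_1}\cong\tors{N_2}$, and that the monomial-order reduction is not obstructed by the coincidences of leading terms among one-element matroids. Everything else is formal and has effectively been carried out already for Theorem~\ref{th:Grothendieck}.
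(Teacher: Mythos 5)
Your argument is correct and follows essentially the same route as the paper, which devotes only the single pre-statement sentence ``Examination of the classes of matroids in $\mc I$ in the proof of Theorem~\ref{th:Grothendieck} shows that they span the subring of polynomials compatible with Corollary~\ref{cor:constraints on T}(a)'' to this corollary, whereas you spell out the examination. The one small inaccuracy in your bookkeeping is the attribution of the pure-torsion monomials: the leading monomial of $\T_{K_N}$ is not of the form $(XY)^N$ but rather $X^{T}Y^{T\oplus Q}$ with $T$ torsion and $Q$ a rank~one projective (exactly the same shape produced by the loop family $\emptyset_N\oplus L_{\mc E}\oplus L_0^{\oplus a}$, which is what creates the overlap between $\mc I_1$ and $\mc I_2$ that you rightly flag), so the pure-torsion monomials $(XY)^N$ are supplied solely by the empty matroids; this reassignment does not affect your spanning argument.
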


We now set out on the main substance of the proof of Theorem~\ref{th:Grothendieck}.
The following subsidiary lemma will afford us useful flexibility for 
manipulating single-element matroids in proving Lemma~\ref{lem:G atom}.

\begin{lemma}\label{lem:disjoint}
Let $\mc E$ be a class in~$\Pic(R)$, and $P$ a finite set of maximal primes of~$R$.
There exists a cyclic torsion $R$-module $N$ whose support is disjoint from $P$
such that $\det(N) = \mc E$.
\end{lemma}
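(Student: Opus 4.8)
The plan is to realise $N$ as a quotient $R/I$ for a suitable nonzero ideal $I$ of $R$, so that cyclicity and torsion come for free, and to arrange both the condition on the support and the condition on the determinant by choosing $I$ inside a prescribed ideal class which happens to be coprime to all the primes in $P$.

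First I would reduce the determinant condition to a statement about $[I]\in\Pic(R)$. Applying the homomorphism $\Phi$ of Proposition~\ref{p:Dedekind K} to the exact sequence $0\to I\to R\to R/I\to 0$, and using that $\Phi$ sends the class of a nonzero ideal $J$ to $(1,[J])\in\Z\oplus\Pic(R)$ (in particular $\det(R)=0$), we get $\rk(R/I)=0$ and $\det(R/I)=-[I]$. Hence it is enough to produce a nonzero ideal $I$ of $R$ whose class in $\Pic(R)$ is $-\mc E$ and whose support $V(I)$ is disjoint from $P$, equivalently, which is coprime to every $\mf p\in P$.

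The core step is then the standard fact that in a Dedekind domain every ideal class contains an integral ideal coprime to any prescribed finite set of maximal primes; I would spell out the short argument. Choose any nonzero integral ideal $J$ with $[J]=-\mc E$ (every class of $\Pic(R)$ is represented by an integral ideal, by clearing denominators). Its inverse $J^{-1}$ is a f.g.\ rank-one projective $R$-module, so for each $\mf p\in P$ the quotient $J^{-1}/\mf p J^{-1}$ is a one-dimensional vector space over the field $R/\mf p$; since the primes of $P$ are pairwise comaximal, the Chinese Remainder Theorem makes the natural map $J^{-1}\to\prod_{\mf p\in P}J^{-1}/\mf p J^{-1}$ surjective. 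Pick $\alpha\in J^{-1}$ mapping to a nonzero element in each factor; then $\alpha\neq 0$, the fractional ideal $I:=\alpha J$ is contained in $R$ because $\alpha\in J^{-1}$, and $I\not\subseteq\mf p$ for every $\mf p\in P$ because $\alpha\notin\mf p J^{-1}=\{\beta:\beta J\subseteq\mf p\}$. Moreover $[I]=[J]=-\mc E$, since $I$ and $J$ differ by the principal fractional ideal $\alpha R$.

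Finally I would set $N:=R/I$. It is cyclic by construction; it is torsion since $I\neq 0$; its support equals $V(I)$, which is disjoint from $P$ by the coprimality just arranged; and $\det(N)=-[I]=\mc E$. (In the degenerate case where $\mc E$ is trivial and $P$ is the whole, necessarily finite, set of maximal primes of $R$ — for instance $R$ a DVR with $P=\{\mf m\}$ — this forces $I=R$ and $N=0$, which is still a cyclic torsion module of the required determinant; in every other case $I$ may be taken proper and $N$ nonzero.) The only step requiring genuine care is the coprimality statement, and that is routine Dedekind-domain algebra via CRT; everything else is bookkeeping with $\Pic(R)\subseteq K_0(R)$.
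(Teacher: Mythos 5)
Your proof is correct and follows essentially the same route as the paper, which reduces the lemma to the standard fact that every ideal class of a Dedekind domain contains an integral ideal coprime to a prescribed finite set of primes (cited there from Conrad's notes) and then sets $N=R/I$. The only difference is that you prove that standard fact yourself via the CRT argument on $J^{-1}$, and your sign bookkeeping $\det(R/I)=-[I]$ is consistent with the paper's conventions.
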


\begin{proof}
This is a restatement of a standard lemma on ideal factorizations,
see e.g.\ \cite[Corollary 4.9]{Conrad-notes}.
In the notation of that corollary, we let $\mf a$ be any ideal of determinant $\mc E$
and let $\mf b$ be the product of the members of~$P$, and the module we seek is $N=R/\mf c$.
\end{proof}

\begin{lemma}\label{lem:G atom}
If $M$ is a one-element matroid satisfying any one
of the following, the class $[M]\in K(\Mat{R})$
lies in the span of the classes of matroids in the set $\mc I$ of~\eqref{eq:G I}.
\begin{enumerate}\renewcommand{\labelenumi}{(\alph{enumi})}
\item $M(\emptyset)=P\oplus N$ and $M(1)=N\oplus C$,
where $P$ is rank~1 projective, $N$ is torsion and $C$ is cyclic, 
and the supports of $N$ and $C$ are disjoint.
\item $M(\emptyset)=N\oplus C$ and $M(1) = N$,
where $N$ is torsion and $C$ is cyclic, 
and the supports of $N$ and $C$ are disjoint.
\item $M(\emptyset) = P\oplus N'$, $M(1) = N$, where 
$P$ is rank~1 projective, $N$ is torsion, and
$N'$ is the quotient of $N$ by its largest invariant factor.
\item either $M$ or $M^*$ sends
$\emptyset$ to $P\oplus T\oplus N'$ and $\{1\}$ to $T\oplus N$, where
$P$ is rank~1 projective, $N$ is torsion,
$N'$ is the quotient of $N$ by its largest invariant factor, and
the support of $T$ is disjoint from that of~$N$.
\item either $M$ or $M^*$ sends
$\emptyset$ to $N\oplus T$ and $\{1\}$ to $T$, 
where $N$ is cyclic and $T$ is torsion.
\item $M$ is any one-element matroid.
\end{enumerate}
\end{lemma}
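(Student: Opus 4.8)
The plan is to prove (a)--(e) as successively more general special cases, each reduced to its predecessors, and then obtain (f) by reducing an arbitrary one-element matroid to one of (a)--(e) via the structure theory of cyclic-kernel surjections. Three mechanisms do the work. First, multiplicativity $\T_{M\oplus M'}=\T_M\,\T_{M'}$: whenever the surjection $M(\emptyset)\to M(1)$ respects a direct-sum decomposition with one summand untouched, $M$ splits off an empty matroid $\emptyset_N$ (for torsion $N$) as a direct summand and $[M]=[\emptyset_N]\cdot[M']$. Second, deletion--contraction: to resolve a one-element matroid $M'$, I would embed it as a minor of a carefully chosen two-element matroid $\widetilde M$ --- checking the existence of $\widetilde M$ against Propositions~\ref{p:DVR M1}, \ref{p:Dedekind M1} and~\ref{p:DVR M2} --- so that $\widetilde M\setminus e$ (or $\widetilde M/e$) is $M'$ and the relation $[\widetilde M]=[\widetilde M\setminus e]+[\widetilde M/e]$ trades $M'$ for matroids strictly simpler in a chosen well-founded order. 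Third, the explicit formula~\eqref{eq:T} together with Corollary~\ref{cor:dual}: every one-element matroid has $\T_M = X^{M(\emptyset)}Y^{\tors{M(\emptyset)}} + X^{M(1)}Y^{\tors{M(1)}\oplus Q}$ for an explicit projective $Q$ of rank $\le 1$, which lets me reduce each claim to a linear identity among the classes of matroids in $\mc I$ (whose $\T$'s were computed in the proof of Theorem~\ref{th:Grothendieck}, where their linear independence was also shown).

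For (a) and (b): the rank and determinant constraints of Proposition~\ref{p:Dedekind M1} force the surjection to split as the identity on the common torsion part $N$ (whose support is disjoint from that of $C$) times a surjection of the remainder --- $P\to C$ in (a), $C\to 0$ in (b). Hence $M=\emptyset_N\oplus M'$ with $M'$ a one-element loop or coloop, and we are reduced to three building blocks: the coloop $M'(\emptyset)=P$, $M'(1)=C$; the loop $M'(\emptyset)=C$, $M'(1)=0$; and the assertion $[\emptyset_N]\in\spann[\mc I]$ for torsion $N$. I would treat these together by computing their $\T$'s, writing each as a small $\Z$-combination of the classes $\T_{K_N}$, $\T_{L_{\mc E}}$, $\T_{L^*_{\mc F}}$ and their direct sums with the trivial loop/coloop (all in $\mc I$), and pushing the remaining discrepancy --- between a cyclic module $C$ and the distinguished representative $N_{[C]}$ of its Picard class --- through an induction on the total order $\le$ on ideals, using Lemma~\ref{lem:disjoint} and one further two-element deletion--contraction to lower the annihilator ideal at each step.

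For (c)--(e): case (c) is essentially $[K_N^*]\in\spann[\mc I]$, which I would obtain from $K_N\in\mc I$ by a single deletion--contraction on a two-element matroid $\widetilde M$ with $\widetilde M\setminus e = K_N$, combined with Proposition~\ref{p:duality2}(a,b) to pass to duals and express $[K_N^*]$ in terms of duals of matroids already known to lie in $\spann[\mc I]$. Case (d) adds a disjoint torsion summand $T$ and follows from (c) by multiplicativity with $\emptyset_T$. Case (e), where the killed cyclic summand and the surviving torsion need not have disjoint support, I would deduce from (b) and (d) by inducting on the number of primes common to the two supports, peeling off one such prime at a time by a deletion--contraction that shrinks the overlap.

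Finally (f). Given an arbitrary one-element matroid $M$ with surjection $\phi\colon M(\emptyset)\to M(1)$ of cyclic kernel, I would use the local structure theory --- Proposition~\ref{p:DVR M1} and Corollary~\ref{cor:DVR M1} at each localization, assembled via Proposition~\ref{p:Dedekind M1} --- to decompose $M(\emptyset)$ into summands on which $\phi$ acts one at a time: away from the support of the kernel $\phi$ is an isomorphism (contributing a $\emptyset_N$ factor), while on the rest $\phi$ performs, prime by prime, either a ``kill a cyclic summand'' move or a ``quotient by the largest invariant factor'' move, so that $M$ --- after splitting off an empty matroid and replacing modules by the standard representatives of their Picard classes, an adjustment absorbed by the two-element manipulations already developed --- is of the form (d) or (e). The main obstacle is exactly this last reduction: the surjection is governed only locally whereas the Picard data is global, so patching the local decompositions into a single honest direct-sum decomposition of $M(\emptyset)$ that realizes $\phi$, while keeping the auxiliary $\le$-order and support inductions simultaneously well-founded, is the delicate point --- and, as Example~\ref{ex:2D not combinatorial} shows, precisely the step that fails over a non-Dedekind base.
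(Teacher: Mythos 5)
You have the right scaffolding — prove (a)--(f) sequentially, embed each one-element matroid as a minor of an auxiliary two-element matroid checked to be a matroid via Propositions~\ref{p:Dedekind M1}--\ref{p:Dedekind M2}, and use the deletion--contraction relation on both elements of that auxiliary matroid — and this is indeed the paper's approach. However, there are genuine gaps that go beyond leaving details to the reader.

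The multiplicativity mechanism as stated is not available. The ring $K(\Mat{R})$ is by construction generated only by classes of matroids on \emph{nonempty} ground sets, so $[\emptyset_N]$ is not an element of it, and there is no element to multiply by. More to the point, the claim you need is additive (membership in the $\Z$-linear \emph{span} of $\mc I$), and the span of $\mc I$ is not visibly closed under ``tensoring in'' an empty matroid: if $[M']$ is expanded as a $\Z$-combination of the $K_{N}$'s and their relatives in $\mc I$, then $\emptyset_T\oplus K_N$ is generally \emph{not} of the form $K_{N''}$ (the largest invariant factor of $T\oplus N$ need not be that of $N$), so $\emptyset_T\oplus K_N\notin\mc I$. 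This breaks your derivation of (d) from (c) by ``multiplicativity with $\emptyset_T$'': the paper's proof of (d) is in fact a genuine induction on the number of invariant factors of $T$, using a pair of two-element matroids $M',M''$ with a cancelling common minor, and cannot be collapsed to a multiplicativity remark.

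Two more issues. For (c), invoking Proposition~\ref{p:duality2} to ``pass to duals'' is circular at that stage: the useful observation (that the one-element part of $\mc I$ equals the set of $K_N$'s together with their duals, and that deletion--contraction relations dualize) becomes available only \emph{after} (c) is in hand, which is precisely why the paper proves (c) directly by a concrete pushout square and only then records the duality remark for use in (d) and beyond. And for (f), you correctly identify that the reduction to (d)/(e) is the delicate point, but the proposal stops there; the paper resolves it with a specific combinatorial induction on the number of $01$/$10$ transitions in the sequences $d_\bullet(\phi_{\mf m})$ over all primes $\mf m$, which is the actual content of the step and cannot be waved through as ``patching local decompositions.'' In short: right strategy, but the (d)-via-multiplicativity shortcut is wrong, the (c)-via-duality shortcut is circular, and the load-bearing constructions in (a), (b), (c), (f) are not supplied.
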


\begin{proof}
A matroid $M'$ on two elements whose generic matroid
is the uniform matroid $U_{1,2}$
gives rise to a linear relation among
its four one-element minors, 
\begin{equation}\label{eq:G2}
[M'\setminus 1] + [M'/1] = [M'] = [M'\setminus 2] + [M'/2].
\end{equation}
We will use this to prove the cases of the lemma sequentially,
reducing each to a linear combination of matroids in $\mc I$
and matroids in previous cases.
For visibility we will specify these $M'$ by drawing the
commutative square
\[\xymatrix{
M'(\emptyset)\ar[r]\ar[d] & M'(1)\ar[d] \\
M'(2)\ar[r] & M'(12)
}.\]
In each case $M'$ can be checked to be a matroid by Proposition~\ref{p:Dedekind M2}.
The non-local conditions reduce to checking that $\det(M'(1)) = \det(M'(2))$.

\noindent\emph{To (a).}
Let $N'$ be the quotient of~$N$ by its largest invariant factor.
First, suppose that the support of~$C$ is disjoint from the supports of $N$ and~$L_{[C]}(1)$.
In that case, the following square specifies a matroid $M'$.  
The modules $N\oplus C$ and $N'\oplus L_{C}(1)$
have the same determinant by construction.  And because of 
the assumption on supports, modulo each maximal ideal $\mf m$, 
either the top map has kernel $R_{\mf m}$ and the right one is trivial,
or the same is true of the left and bottom maps respectively.
This ensures that the localizations of the square are pushouts.
\[\xymatrix{
P\oplus N \ar[r]\ar[d] & N\oplus L_{[C]}(1) \ar[d] \\
N\oplus C \ar[r] & N'
}\]
The left minor $M'\setminus 1$ is the matroid we are interested in; 
the bottom minor $M'/2$ and 
the right minor $M'/1$ are both among the matroids $K_N$ in~$\mc I$;
and the top minor $M'\setminus 2$ is among the $\emptyset_N\oplus L^*_{\mc F}$.
So the relation~\eqref{eq:G2} proves the result in this case.

Next, if we lack the support assumptions on~$C$,
we are assured the existence of a cyclic module $C'$
of support disjoint from $L_{[C]}(1)$ and~$N$, with the same determinant as~$C$,
by Lemma~\ref{lem:disjoint}.  In this case,
we repeat the argument with the following square,
which can similarly be checked to give a matroid $M''$.
\[\xymatrix{
P\oplus N \ar[r]\ar[d] & N\oplus C' \ar[d] \\
N\oplus C \ar[r] & N'
}\]
Now $M''\setminus 1$ is the matroid of interest, 
the minors $M'/2$ and $M'/1$ are among the matroids $K_N$,
and $M'\setminus 2$ is in the span of~$\mc I$ by the last paragraph.
Therefore, using \eqref{eq:G2} again, we have proved case~(a).

\noindent\emph{To (b).}  
Let $R/I$ be the largest invariant factor of~$N$. 
Let $J$ be a nonzero ideal contained in~$I$
chosen so that $\det(J) = -\det(C)$.
and the supports of $R/J$ and~$L_{[C]}(1)$ are disjoint; this exists by Lemma~\ref{lem:disjoint}.
Then $R/J$ is the largest invariant factor of $N\oplus R/J$.
Having done this, both of the following squares give matroids,
where $P$ is a suitably chosen rank~1 projective module.
\[
M': 
\xymatrix{
P\oplus N \ar[r]\ar[d] & N\oplus L_{[C]}(1) \ar[d] \\
N\oplus R/J \ar[r] & N
}
\qquad
M'':
\xymatrix{
P\oplus N \ar[r]\ar[d] & N\oplus C \ar[d] \\
N\oplus R/J \ar[r] & N
}
\]
Subtracting the relation~\eqref{eq:G2} for the two matroids, 
we express the class of $M''/1$, which is the matroid of interest,
as a linear combination of the classes of $M''\setminus 2$, $M'/1$, and~$M'\setminus 2$.
But, of these, $M''\setminus 2$ is one of the matroids appearing
in part~(a), $M'/1$ is of form $\emptyset_N\oplus L_{\mc E}$, 
and $M'\setminus 2$ is of form $\emptyset_N\oplus L^*_{\mc F}$.  
This proves case~(b).

\noindent\emph{To (c).}  
Use Lemma~\ref{lem:disjoint} to produce a cyclic module $C$
whose determinant is $\det(N)-\det(N')$ and whose support
is disjoint from that of~$N$.  Then the following square gives a matroid $M'$.
\[\xymatrix{
P\oplus N' \ar[r]\ar[d] & N'\oplus C \ar[d] \\
N \ar[r] & N'
}\]
Here, $M'/2$ is of form $\emptyset_N\oplus L_{\mc E}$, and $M'/1$ is
covered by case~(b) of the lemma, and $M'\setminus 2$ is covered by case~(a).
So \eqref{eq:G2} proves case~(c).

At this point, we pause to take note that the matroids 
of form $\emptyset_N\oplus L_{\mc E}$ and $\emptyset_N\oplus L^*_{\mc E}$
and the matroids encompassed by this last case (c)
are the duals of the $K_N$.  These are all the matroids in~$\mc I$ of one element.
Moreover, in equation \eqref{eq:G2}, dualizing $M'$ dualizes the four minors.
Therefore, for the rest of this proof, arguing that a class $[M]$
is in the linear span of classes of matroids in~$\mc I$ will imply
the same for the class $[M^*]$ of the dual.

\noindent\emph{To (d).}
As stated just above, it is sufficient to treat the case where
$M$ is as described, not its dual.
For this we use induction on the number of invariant factors of
the torsion module $T$.
If it has none, it is the zero module and we are in case~(a).
Otherwise, let $T'$ be the quotient of~$T$ by its largest invariant factor.  

Let $N''$ be the quotient of $N'$ by its largest invariant factor.
Lemma~\ref{lem:disjoint} gives cyclic modules $C$ and~$D$
whose determinants take the necessary values,
and whose supports are disjoint from the supports of other appearing modules as necessary,
in order for the following squares to specify matroids $M'$ and~$M''$.  
\[
M':
\xymatrix{
P\oplus N'\oplus T \ar[r]\ar[d] & N'\oplus T\oplus C \ar[d] \\
N\oplus T \ar[r] & N'\oplus T'
}
\qquad
M'':
\xymatrix{
P\oplus N''\oplus T' \ar[r]\ar[d] & N'\oplus T\oplus C \ar[d] \\
N'\oplus T'\oplus D \ar[r] & N'\oplus T'
}\]
Here $M'/1$ equals $M''/1$, and these can be cancelled out
of the two corresponding invocations of \eqref{eq:G2},
leaving a linear relation among the six other minors.
Of these, $M'\setminus 1$ is the matroid of interest.
$M''\setminus 1$ is the matroid to which we will apply
the induction hypothesis: when applying it we take
the new module $T$ to be $T'$, which has one invariant factor fewer 
than (the old) $T$, and the new modules $N'\to N$ to be
$N''\to N'\oplus D$.  
The remaining minors are dealt with: $M'/2$ is among the $K_N$,
$M''/2$ is dealt with in case~(b) of this lemma, 
$M'\setminus 2$ in case~(a), and $M''\setminus 2$ in case~(c). 
Therefore the induction goes through and we have proved case~(d).

\noindent\emph{To (e).}
Again we may assume $M$ (not $M^*$) is as described.
We use induction on the maximum $k$ such that, for some maximal prime $\mf m$
contained in the support of $N$,
there are $k$ cyclic summands of~$T_{\mf m}$ longer than $N_{\mf m}$.  
If $k=0$, then $N$ is the largest invariant factor of 
the part of~$N\oplus T$ with the same support,
and $M$ falls under case~(d).  

Otherwise, let $F$ be the direct sum of the localizations
of the largest invariant factor of~$T$ of length
exceeding the corresponding localization of~$N$,
and let $T'$ be $T/F$.
Lemma~\ref{lem:disjoint} provides cyclic modules $C$ and~$D$
so that the following squares are matroids.
(For the top maps of the squares, this is where the fact
that $\dim_{R/\mf m} F_{\mf m}\geq\dim_{R/\mf m} N_{\mf m}$ is used.)
\[
M':
\xymatrix{
P\oplus N \ar[r]\ar[d] & T\oplus C \ar[d] \\
N\oplus \T \ar[r] & T
}
\qquad
M'':
\xymatrix{
P\oplus N \ar[r]\ar[d] & T\oplus C \ar[d] \\
N\oplus \T'\oplus D \ar[r] & T'
}
\]
Again, $M'/2$ equals $M''/2$, and two invocations of \eqref{eq:G2}
give a linear relation among the remaining minors.  
Of these $M'\setminus 2$ is the matroid of interest,
$M''\setminus 2$ is covered by the inductive hypothesis,
and all of $M'/1$, $M''/1$, $M'\setminus 1$, and $M''\setminus 1$ 
fall under case~(d).  This proves case~(e).

\noindent\emph{To (f).}
Either $M$ or $M^*$ is of global rank~0, and we may assume it is $M$.
Here we use one further induction.
Let $k$ be the maximum, over maximal primes $\mf m$, 
of the number of times $01$ or~$10$ appear as substrings
of the sequence $d_\bullet(\phi_{\mf m})$ associated in Section~\ref{sec:DVR} to
the map $\phi_{\mf m}$ in the localized matroid $M\otimes R_{\mf m}$.
As a base case, if $k\leq 1$, then each $\phi_{\mf m}$, and therefore
$\phi$, is a quotient by a cyclic summand; this is case~(e).

Otherwise, let $N$ be the quotient of~$M(\emptyset)$
by its largest invariant factor.
With $C$ provided by Lemma~\ref{lem:disjoint} as usual,
we have a matroid $M'$ given by
\[
\xymatrix{
P\oplus N \ar[r]\ar[d] & M(1)\oplus C \ar[d] \\
M(\emptyset) \ar[r] & M(1)
}
\]
The minor $M'/2$ is $M$.  The minor $M'\setminus 2$ is covered by
the induction hypothesis: if $\psi$ is the map in this matroid,
then for each $\mf m$ the sequence $d_\bullet(\psi_{\mf m})$
is obtained from $d_\bullet(\phi_{\mf m})$ by replacing the final
infinite run of 0s by 1s, so one of the substrings 10 is lost.
The minors $M'/1$ and $M'\setminus 1$ both are handled by case~(e).
This proves case~(f) and finishes our discussion of one-element matroids.
\end{proof}

We approach the reduction of matroids on several elements 
to our basis in Lemma~\ref{lem:G molecule} below, 
in several steps as we did in Lemma~\ref{lem:G atom}.
The bulk of our discussion here will pertain to matroids with one generic basis;
in the terminology of~\cite{D'Adderio-Moci}, these are called \emph{molecules}
(since for matroids over a field all molecules are direct sums of \emph{atoms},
i.e.\ one-element matroids).
First we state two subsidiary technical lemmas.

\begin{lemma}\label{lem:projectives in molecules}
If $M$ is a molecule on ground set~$E$ and $a$ a generic coloop in it, then
$\projpart{M(A)} = \projpart{M(E\setminus a)}\oplus\projpart{M(Aa)}$
for every $A\subseteq E\setminus\{a\}$.
\end{lemma}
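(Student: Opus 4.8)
The plan is to verify the claimed isomorphism by comparing the two invariants that classify projective modules over a Dedekind domain, rank and determinant (Proposition~\ref{p:Dedekind structure}): that is, to show that $\projpart{M(A)}$ and $\projpart{M(E\setminus a)}\oplus\projpart{M(Aa)}$ have equal rank in $\Z$ and equal determinant in $\Pic(R)$.

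First the ranks. Since $\rk\projpart{M(X)}$ equals $\dim_{\Frac(R)}(M(X)\otimes\Frac(R))=\cork_{\Mg}(X)$, and since $\Mg$ is a matroid with a single basis $B$ --- so it consists of the coloops $B$ together with the loops $E\setminus B$ --- we have $\cork_{\Mg}(X)=|B|-|X\cap B|$ for all $X$. As $a$ is a generic coloop it lies in $B$, so for $A\subseteq E\setminus\{a\}$
\[
\cork_{\Mg}(A)=|B|-|A\cap B|=1+\bigl(|B|-|A\cap B|-1\bigr)=\cork_{\Mg}(E\setminus a)+\cork_{\Mg}(Aa),
\]
which is the desired equality of ranks.

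For the determinants I would induct on $|E\setminus A|$. The base case is $A=E\setminus a$: then $Aa=E$ and $\projpart{M(E)}$ has rank $\cork_{\Mg}(E)=0$, hence is zero (so in fact $M(E)$ is torsion, whether or not one invokes essentiality), and the identity becomes $\projpart{M(E\setminus a)}=\projpart{M(E\setminus a)}$. For the inductive step, pick $b\in E\setminus(A\cup\{a\})$; axiom~(M2) for the pair $(a,b)$ over the set $A$ supplies a pushout square with vertices $M(A)$, $M(Ab)$, $M(Aa)$, $M(Aab)$, all four maps being surjections with cyclic kernel. Since $a$ is a coloop of the generic matroid of every minor of $M$ in which it survives, the maps $M(A)\to M(Aa)$ and $M(Ab)\to M(Aab)$ drop rank by $1$, so their cyclic kernels are $\cong R$; taking classes in $K_0(R)=\Z\oplus\Pic(R)$ and using that $\det R$ is trivial, the short exact sequences $0\to R\to M(A)\to M(Aa)\to 0$ and $0\to R\to M(Ab)\to M(Aab)\to 0$ give $\det M(A)=\det M(Aa)$ and $\det M(Ab)=\det M(Aab)$, whence $\det\projpart{M(A)}-\det\projpart{M(Aa)}=\det\tors{M(Aa)}-\det\tors{M(A)}$ and similarly for $Ab$. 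To compare these two torsion-determinant differences I would saturate the rank-$1$ kernel of $M(A)\to M(Aa)$ inside $M(A)$: its quotient is projective, so the sequence splits, exhibiting a rank-$1$ projective direct summand $P_a^{(A)}$ of $\projpart{M(A)}$ complementary to $\projpart{M(Aa)}$, with $\det P_a^{(A)}=\det\tors{M(Aa)}-\det\tors{M(A)}$; then, using the pushout square together with the cancellation property of projective summands over a Dedekind domain (as in the proof of Proposition~\ref{p:duality}), identify $P_a^{(A)}$ with $P_a^{(Ab)}$ and, via the inductive hypothesis, with $\projpart{M(E\setminus a)}$. Proposition~\ref{p:cl(M)}, applied at $A$ and $Aa$, provides an auxiliary identity in $\Pic(R)$ that can absorb the contribution of the dual of $b$ and help separate the cases according to whether $b$ is a generic loop or a generic coloop.

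I expect this last matching step --- propagating the determinant identity through a single axiom-(M) pushout square --- to be the main obstacle: one must carefully track how the cyclic kernels, the torsion submodules, and their Picard classes interact in the square, distinguishing the cases where $b$ is a generic loop from where it is a generic coloop. Once the one-step propagation is secured, the induction, together with the rank computation above and Proposition~\ref{p:Dedekind structure}, closes the proof.
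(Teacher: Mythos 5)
Your rank computation is correct, and your overall plan --- separating the rank and determinant invariants of Proposition~\ref{p:Dedekind structure} and inducting on $|E\setminus A|$ through the axiom-(M) square for the pair $\{a,b\}$ --- is the same skeleton as the paper's proof, which likewise splits on whether $M(A)\to M(Ab)$ drops rank and then computes in $K_0(R)$.

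The determinant half, however, is not a proof. You reduce the lemma to the identity $\det P_a^{(A)}=\det P_a^{(Ab)}$, equivalently
\[
\det(\tors{M(Aa)})-\det(\tors{M(A)})\;=\;\det(\tors{M(Aab)})-\det(\tors{M(Ab)}),
\]
and then only say you ``would'' establish it. Given your rank computation, this identity \emph{is} the lemma, and the tools you assemble cannot deliver it. Each edge of the pushout square contributes one relation (its kernel is cyclic of rank $0$ or $1$); in the case where the horizontal maps also drop rank, these combine into the single relation $\det P_a^{(A)}+\det P_b^{(Aa)}=\det P_b^{(A)}+\det P_a^{(Ab)}$, which is symmetric in $a$ and $b$ and cannot separate $\det P_a^{(A)}$ from $\det P_a^{(Ab)}$. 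Proposition~\ref{p:cl(M)} adds nothing new either: comparing $\cl(M)$ at $A$ and at $Aa$, after substituting $\det(M(A))=\det(M(Aa))$, only yields information about $\projpart{M^*(E\setminus A)}$, not about the quantity you need.

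More seriously, the matching step you defer is not just unproved; it can fail, so no amount of careful bookkeeping inside the square will close it. Take $R=\Z[\sqrt{-5}]$ with the non-principal ideal $I=(2,1+\sqrt{-5})$, and the realizable molecule on $E=\{a,b\}$ given by $N=R^2$, $x_a=(0,1)$, $x_b=(2,1+\sqrt{-5})$. Both elements are generic coloops, $M(ab)=R/(2)$ is torsion, $\projpart{M(\emptyset)}=R^2$, $\projpart{M(a)}=R$, and $\projpart{M(b)}\cong I$; hence $\det P_a^{(\emptyset)}=0$ while $\det P_a^{(\{b\})}=\det(I)\neq 0$, and $R^2\not\cong I\oplus R$. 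So the rank-one complement of $\projpart{M(Aa)}$ in $\projpart{M(A)}$ genuinely varies with $A$ once $\Pic(R)$ is nontrivial. Your route therefore only succeeds when every rank-one projective is free (e.g.\ $R=\Z$, where the determinant half is vacuous and the rank argument alone suffices); over a general Dedekind domain the identification of that complement with $\projpart{M(E\setminus a)}$ --- which is also precisely the delicate point in the paper's own rank-drop-one case, where the $K_0$ computation produces the complement $\ker\phi\cong R$ rather than $\projpart{M(E\setminus a)}$ --- requires either a different input or a weaker statement.
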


\begin{proof}
This is clear for $A=E\setminus\{a\}$, so by induction on the size of the complement of~$A$
we need only establish the statement $A$ given the statement for~$Ab$.  
The rank drop between $M(A)$ and~$M(Ab)$ equals that between $M(Aa)$ and~$M(Aab)$.
If this rank drop is zero, then we are done because $\projpart{M(Ab)}=\projpart{M(A)}$
and $\projpart{M(Aab)}=\projpart{M(Aa)}$.  If the rank drop is one,
then given maps making a pushout square
\[\xymatrix{
M(A) \ar[r]^\phi\ar[d] & M(Ab) \ar[d] \\
M(Aa) \ar[r]_{\phi'} & M(Aab)
},\]
the kernels of $\phi$ and~$\phi'$ are isomorphic projective modules;
call one of them $P$.
Then, in~$K_0(R)$, we have
\[[M(A)] = [M(Ab)]\oplus [\ker\phi] = [P]\oplus[M(Aab)]\oplus [\ker\phi'] = [P]\oplus[M(Aa)]\]
and the $K$-class of a projective module determines it.
\end{proof}

\begin{lemma}\label{lem:G molecule}
If $M$ is a matroid on ground set $E$ satisfying any of the following, 
then the class $[M]\in K(\Mat{R})$ lies in the span
of the classes of the matroids in~$\mc I$.
\begin{enumerate}\renewcommand{\labelenumi}{(\alph{enumi})}
\item $M$ is a direct sum of one-element matroids.
\item The only generic basis of~$M$ is $\emptyset$.
\item $M$ is a molecule.
\item $M$ is arbitrary.
\end{enumerate}
\end{lemma}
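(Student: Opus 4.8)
The plan is to prove the four parts in the order (a)--(d), each building on its predecessors, following the template of Lemma~\ref{lem:G atom}: wherever the matroid in hand possesses an element $a$ that is neither a generic loop nor a generic coloop, use the deletion--contraction relation $[M]=[M\setminus a]+[M/a]$ to pass to matroids with fewer elements; and wherever no such element exists --- which is exactly when $M$ is a molecule, all of whose elements are generic loops or coloops --- realize $M$ as a minor of a slightly larger matroid $M'$ to which a deletion--contraction relation does apply, and thereby express $[M]$ in terms of $[M']$ together with minors whose module data is strictly simpler. Throughout, Proposition~\ref{p:Dedekind M2} is what lets us check that the auxiliary matroids $M'$ we write down really are matroids over $R$, and Lemma~\ref{lem:disjoint} is what lets us choose the extra cyclic modules occurring in those auxiliaries so that the global (Picard-group) constraints of Proposition~\ref{p:Dedekind M2} are met and the localized squares are pushouts.

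For (a), $M=M_1\oplus\cdots\oplus M_n$ is a direct sum of one-element matroids; by Lemma~\ref{lem:G atom}(f) each $[M_i]$ lies in the span of $\{[J]:J\in\mc I\}$, so by multiplicativity $[M]$ is a $\Z$-combination of classes $[J_1\oplus\cdots\oplus J_n]$ with $J_i\in\mc I$, and since every member of $\mc I$ is itself a direct sum of one-element matroids together with one empty matroid, it suffices to show the span of $\mc I$ is closed under direct sum. One reduces, summand by summand, using a three-element auxiliary matroid one of whose elements forms a $U_{1,2}$ generically (so deletion--contraction applies there), to ``merging'' a pair of summands; the merge of two loop-type summands or of two coloop-type summands, and the absorption of a $K_N$ or an $\emptyset_N$, are each handled by one such relation after an induction on a complexity measure of the modules (number of invariant factors, or the $d_\bullet$-sequences of Section~\ref{sec:DVR}), exactly as in cases (d)--(f) of Lemma~\ref{lem:G atom}; note $\mc I$ is closed under duality, the $K_N$ being dual to the remaining families, so the loop and coloop cases are interchangeable. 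Part (b), where $M(\emptyset)$ --- hence every $M(A)$ --- is torsion, is the same kind of argument with a single extra loop adjoined, inducting on the torsion complexity and using the families $\emptyset_N\oplus L_{\mc E}\oplus L_0^{\oplus a}$ and the $K_N$; it is really the heart of the ``all loops'' situation.

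For (c), if $B$ is the unique generic basis of the molecule $M$, then $E=(E\setminus B)\sqcup B$ with $E\setminus B$ the generic loops and $B$ the generic coloops; dualizing if necessary (Proposition~\ref{p:duality2}), we peel off the coloops of $B$ one at a time by adjoining a new element that turns a coloop into a non-loop--non-coloop element, applying deletion--contraction, and bounding the resulting pieces by case (b) (for the loop part), case (a) (when a piece splits as a direct sum of one-element matroids), matroids already in $\mc I$, and smaller molecules treated by induction on $|E|$. Finally (d) is the standard reduction: if $M$ has an element $a$ that is neither a generic loop nor a generic coloop then $M\setminus a$ and $M/a$ are essential with one fewer element and $[M]=[M\setminus a]+[M/a]$, so by induction on $|E|$ --- with base cases $|E|\le1$ supplied by Lemma~\ref{lem:G atom}(f) and by $[\emptyset_N]\in\mc I$ --- we reduce to molecules, i.e.\ to case (c). I expect the main obstacle to be making the inductions in (a)--(c) actually terminate: one must arrange the auxiliary matroids so that the module data of \emph{every} minor produced is strictly simpler in the chosen complexity order, which forces careful tracking of invariant factors and of determinants, the latter being precisely where Lemma~\ref{lem:disjoint} and the Picard-group half of Proposition~\ref{p:Dedekind M2} enter; the bookkeeping in case (c), where loop-data and coloop-data interact, is the most delicate point.
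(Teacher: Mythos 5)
Your architecture matches the paper's: part (d) reduces to molecules by deletion--contraction on elements that are neither generic loops nor generic coloops, and parts (a)--(c) proceed by embedding $M$ as a minor of an auxiliary matroid on a slightly larger ground set, expanding that auxiliary by deletion--contraction in two different orders, and cancelling the common minors. The reduction of (a) to ``the span of $\mc I$ is closed under direct sum'' is also a fair reformulation of what the paper does. However, the proposal defers precisely the constructions that constitute the proof, and in two places the sketch as stated would not work. First, in (b), adjoining ``a single extra loop'' accomplishes nothing: a generic loop cannot be used in a deletion--contraction relation, so the enlarged matroid is no easier to decompose than $M$ itself. What is actually needed (and what the paper does) is to \emph{raise the generic rank}: one replaces the largest invariant factor of $M(\emptyset)$ by a projective module of the same determinant, so that the new matroid on $E\amalg\{\eta\}$ has generic matroid $U_{1,|E|+1}$ and every element becomes available for deletion--contraction; only then does cancelling $M'\setminus\eta\setminus a$ between the two expansions express $[M]=[M'/\eta]$ in terms of smaller matroids and matroids with more one-element summands.

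Second, in (a), the step you describe as ``merging a pair of summands \dots exactly as in cases (d)--(f) of Lemma~\ref{lem:G atom}'' is not of that form, and a three-element auxiliary does not suffice. Merging two summands of the \emph{same} generic rank uses a four-element realizable extension with generic matroid $U_{2,4}$ and a six-term-versus-six-term cancellation; merging a generic loop with a generic coloop into $L_{\mc E}\oplus L^*_{\mc F}$ requires a four-element extension in which one of the quotient maps is perturbed by an endomorphism $\psi$ supported on the other summand (so that a previously independent pair $\{1,3\}$ acquires corank $0$), together with a decreasing induction on the generic rank of $M$. Neither construction is a routine repetition of the one-element arguments. Finally, in (c) the determinant conditions of Proposition~\ref{p:Dedekind M1} for the rank-preserving covers $M'(A)\to M'(Aa)$ with $\eta\in A$ are not automatic: they rest on the structural fact that for a generic coloop $a$ of a molecule one has $\projpart{M(A)}=\projpart{M(E\setminus a)}\oplus\projpart{M(Aa)}$ for all $A\subseteq E\setminus\{a\}$ (Lemma~\ref{lem:projectives in molecules} in the paper), which your outline does not supply. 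These omissions are not bookkeeping; they are the substance of the argument, so as written the proposal is a correct plan rather than a proof.
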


\begin{proof}
As in the last proof, we will manipulate our matroid $M$ by fabricating
larger matroids in which $M$ appears as a minor.  But we have more
room to maneuver, as matroids on fewer elements than~$M$ may
also appear in the deletion-contraction relations,
and induction on the ground set size shows that the classes of these are
in the span of~$\mc I$.

\noindent\emph{To (a)}.
For this step of the argument we will use another induction, on
the number of one-element summands of a matroid~$M$ which 
are not $L_0$ or~$L^*_0$, possibly excluding \emph{one} summand
of each of the forms $L_{\mc E}$ and~$L^*_{\mc F}$.  (There may also be a summand
which is an empty matroid for a torsion module; this will be inert
and have no effect on our argument).  
In the base case, $M$ is a direct sum of some copies of~$L_0$, possibly
a single $L_{\mc E}$, some copies of~$L^*_0$, possibly a single $L^*_{\mc E}$,
and some empty matroid $\emptyset_N$; this is an element of~$\mc I$.  

As inductive step, we will use deletion-contraction relations
to increase the number of such summands in two ways,
one of which applies to any direct sum of at least two
one-element matroids.  One of our constructions will replace
a direct sum of two one-element summands of the same generic rank
by a matroid of form $L_{\mc E}\oplus L_0$ or $L^*_{\mc F}\oplus L^*_0$.
The other will replace a direct sum of two one-element summands
of unequal generic ranks with some $L_{\mc E}\oplus L^*_{\mc F}$.

For the former construction
suppose we have a two-element molecule $N$,
without loss of generality having two coloops,
which is a summand of~$M$; write $M=N\oplus K$.
For convenience suppose the ground set of~$N$ is $\{1,2\}$.

The basis of $N$ is~$\emptyset$, and $N(\emptyset)$
has the form $P\oplus R\oplus T$, where $P$ is a rank~1 projective module
and $T$ is some torsion module.
Fix maps $\phi:N(\emptyset)\to N(1)$, $\psi:N(\emptyset)\to N(2)$.
By making the non-free analogue of a change of basis 
in this splitting $P\oplus R$ if necessary,
we can suppose that neither of the saturations of $\ker\phi$ nor $\ker\psi$ is contained
in $R$ or~$P$.  Now embed $N$ in a realizable matroid $N'$ on $\{1,2,3,4\}$
so that $N=N'\setminus\{3,4\}$, the map $N'(\emptyset)\to N(3)$ is
the quotient map $P\oplus R\oplus T\to L_{[P]}(1)\oplus R\oplus T$ on the first factor,
the map $N'(\emptyset)\to N(4)$ is the quotient $P\oplus R\oplus T\to P\oplus T$
on the second, and the rest of $N'$ is completed by taking pushouts.  

By construction, none of the kernels of the maps with source $N'(\emptyset)$
in this realization has its saturation contained in another such saturation,
so that the quotient of $N'(\emptyset)$ by the sum of two such kernels has
rank~0.  Thus, the generic matroid of~$N'$ is $U_{2,4}$.  
Thus, the direct sum $M'=N'\oplus K$ is $U_{2,4}$ plus a molecule.
We will use deletion-contraction relations 
to break $M'$ down in two ways, the knowledge of the generic matroid of~$M'$
assuring us that we are not choosing loops or coloops.  
On one hand, use (in sequence) the elements $3$ of~$M'$,
$4$ of~$M'\setminus 3$, $1$ of~$M'\setminus 3/4$, 
$1$ of~$M'/3$, and $2$ of $M'\setminus 1/3$.
On the other, use the elements $1$ of~$M'$,
$2$ of~$M'\setminus 1$, $3$ of~$M'\setminus 1/2$, 
$3$ of~$M'/1$, and $4$ of $M'\setminus 3/1$.
This gives us equalities of classes in the Grothendieck ring:
\begin{align*}
\noequals [M'\setminus3,4]+[M'\setminus1,3/4]+[M'\setminus3/1,4]+[M'\setminus 1,2/3]+[M'\setminus 1/2,3]+[M'/1,3]
\\&= [M']
\\&= [M'\setminus1,2]+[M'\setminus1,3/2]+[M'\setminus1/2,3]+[M'\setminus 3,4/1]+[M'\setminus 3/1,4]+[M'/1,3]
\end{align*}
The term $[M'/1,3]$ cancels, and all of the remaining terms aside from 
$[M'\setminus3,4]$ and $[M'\setminus1,2]$ are classes of matroids on fewer elements,
so they are in the span of the classes of~$\mc I$
by our top-level induction.  The matroid $M'\setminus 3,4$ is our original~$M$.
Finally, $M'\setminus 1,2$ has more summands than~$M$ which are $L_0$ or~$L_0^*$:
there is a new such summand in $M'\setminus 1,2$ on the element~$4$.  
So it is covered by one of our inductions as well.

Turning to the latter construction, we will in fact need to invoke a second induction,
on the rank of the generic matroid of~$M$, that is the size of its generic basis.
We set this up decreasingly, so the base case is when $M$ has only coloops:
in this case, $M$ has no loop and this construction cannot in fact imply.  

Continuing, we suppose $M$ has a
two-element summand $N$, say on ground set $\{1,2\}$,
which is itself the sum of a matroid $N_1$ on its coloop~$1$,
and a matroid $N_2$ on its loop~$2$.  
Again we write $M=N\oplus K$.

By choosing any maps and computing the pushout, we may construct
a matroid $\tilde N_2$ on ground set $\{2,4\}$ where $\tilde N_2(\emptyset) = N_2(\emptyset)$,
$\tilde N_2(2) = N_2(2)$, and $\tilde N_2(4) = \tors{N_2(\emptyset)}\oplus L_{[P]}(1)$
where $P = \projpart{(N_2(\emptyset))}$.  Its generic matroid will be $U_{1,2}$.
With the dual of this construction we also construct a matroid
$\tilde N_1$ on ground set $\{1,3\}$ with generic matroid $U_{1,2}$,
where $\tilde N_1(3) = N_1(\emptyset)$, $\tilde N_1(13) = N_1(1)$, 
and $\tilde N_1(1) = N_1(1)\oplus L_{\mc E}(1)$
where $\mc E = [N_1(\emptyset)]-[N_1(1)]$.  

We will construct $N'$ as a perturbation of $\tilde N\doteq\tilde N_1\oplus\tilde N_2$, as follows.
Fix realizations of $\tilde N_1$ and~$\tilde N_2$, so that the induced realization
of $\tilde N$ provides four maps $\phi_1,\ldots,\phi_4$
with cyclic kernel from the module $\tilde N(\emptyset)$,
corresponding respectively to the atoms $1,\ldots,4$ covering $\emptyset$ in $\mc B(4)$.
The kernels of $\phi_1$ and~$\phi_3$ are both contained in~$\tilde N_1(\emptyset)$,
while the kernels of $\phi_2$ and~$\phi_4$ are contained in~$\tilde N_2(\emptyset)$;
all of them are isomorphic to $R$ as $R$-modules.
The module $\tilde N_1(\emptyset)$ is the direct sum of a projective rank 1 summand~$P$,
and a torsion module.  There exists an injection $\psi:P\hookrightarrow \ker\phi_2\cap\ker\phi_4$.
This can be composed with the embedding $\ker\phi_2\cap\ker\phi_4\subseteq \tilde N(\emptyset)$
and summed with zero maps on the other summands to produce a map
$\psi:N(\emptyset)\to N(\emptyset)$.  
The map $(\id+\psi):N(\emptyset)\to N(\emptyset)$ is then ``upper triangular'' and hence an automorphism.  
Let $x$ be a generator of $\ker\phi_3$, and
define a new map $\phi'_3$ from $\tilde N(\emptyset)$
to be the quotient by the submodule $\<x+\psi(x)\>$.  Finally, let $N'$
be the matroid on ground set~$\{1,2,3,4\}$ with 
$N'(\emptyset) = \tilde N(\emptyset)$ and whose maps and other modules
induced as pushouts of $\phi_1$, $\phi_2$, $\phi'_3$, and $\phi_4$.

Our perturbation of $\phi_3$ to $\phi'_3$ has arranged that $\cork_{N'}(13) = 0$.
On the other hand, is $3\not\in A$ then $N'(A)$ is unchanged from $\tilde N(A)$;
if $A$ contains 3 and one of 2 or~4 but not~1 then $N'(A)\cong\tilde N(A)$
by construction of~$\psi$; and $N'(3)\cong\tilde N(3)$ as well, since $\id+\psi$
is an automorphism.  In particular the generic matroid of~$N$ is the rank~2 matroid
on $\{1,2,3,4\}$ with no loops whose only nontrivial parallelism class is $\{2,4\}$.

Let $M'=N'\oplus K$.  
We have deletion-contraction relations giving the following equalities:
\begin{align*}
\noequals [M'\setminus 3,4]+[M'\setminus 3/4]+[M'\setminus 1,2/3]+[M'\setminus 1/2,3]+[M'/1,3]
\\&= [M']
\\&= [M'\setminus 1,2]+[M'\setminus 1/2]+[M'\setminus 3,4/1]+[M'\setminus 3/1,4]+[M'/1,3]
\end{align*}
The term $[M'/1,3]$ cancels, and the two terms before it in each line
are matroids on fewer elements.  The matroid $M'\setminus 3/4$ is our original~$M$,
since $M$ was the same minor of~$\tilde N\oplus K$ and we haven't altered the
relevant modules in it.
The matroid $N'\setminus 1/2$, for the analogous reason,
is the direct sum of $L^*_{[P]}$ on the element~4, 
$L_{\mc E}$ on the element~3, and an empty matroid, so 
$M'\setminus 1/2$ improves on the quantity counted in the induction
we introduced at the start of this case~(a).
The remaining matroids, $M'\setminus 3,4$ and $M'\setminus 1,2$,
are also direct sums of one-element matroids, and they both have
generic rank~2, so they are covered by our latest-introduced induction.
Altogether, this finishes case~(a).

\noindent\emph{To (b)}.
We will use induction on the number of elements of~$E$
which are not the ground set of a one-element direct summand.  
The base case is part~(a).

We construct a matroid $M'$ on $E\amalg\{\eta\}$ which will agree in most of its modules
with the direct sum of~$M$ and a loop $\emptyset\mapsto 0$, $\{\eta\}\mapsto 0$.
In particular $M'/\eta$ will be $M$.  
We let $M'(\emptyset)$ be obtained from $M(\emptyset)$ by replacing its
largest invariant factor with a projective module with the same determinant.
For each $b\in E$, we use Lemma~\ref{lem:disjoint} to produce a cyclic module
$C(b)$ of disjoint support from any module in~$M$ and so that 
$[C(b)]+[M(b)] = [M(\emptyset)]$ in~$\Pic(R)$, and then set 
$M'(b) = M(b)\oplus C(b)$.  In any other case set $M'(A) = M(A\setminus\eta)$,
where $\eta$ is not necessarily in~$A$.

Our choices of $M'(\emptyset)$ and the modules $M'(b)$ for singletons
are exactly as is needed so that all the pairs $M'(\emptyset)$, $M'(b)$
satisfy the $K$-theoretic condition of Proposition~\ref{p:Dedekind M1}.  
For the other covering relations of subsets of~$E\amalg\{\eta\}$,
both modules are rank~0 so the $K$-theoretic condition is trivially satisfied.
The localization conditions are essentially inherited from~$M$.
Since the summands $C(b)$ have support disjoint from any of the other
modules under consideration, they don't interfere in this respect.  
The alteration we have made to~$M'(\emptyset)$ replaces a final infinite
string of $0$s by~$1$s in the sequences $d_\bullet$ associated to the maps
$M'(\emptyset)_{\mf m}\to M'(b)_{\mf m}$; the resulting sequence is still
of the sort allowed by Proposition~\ref{p:DVR M1}.
These same facts about the localizations also suffice to establish
Proposition~\ref{p:Dedekind M2}, in which only the
local considerations of Proposition~\ref{p:DVR M2} are relevant.

The generic matroid of $M'$ is $U_{1,|E|+1}$.
Therefore, no deletion of~$M'$ with more than one element
is a molecule, and we may freely use the deletion-contraction relation
on such deletions.  
Let $a$ be any element of~$E$.  Splitting $M$ into three minors
by deletion-contraction in two ways, we have
\begin{align*}
  [M'\setminus\eta\setminus a] + [M'\setminus\eta/a] + [M'/\eta]
= [M'] 
= [M'\setminus a/\eta] + [M'\setminus a/\eta] + [M'/a]
\end{align*}
so that, cancelling the common deletion,
\[[M'\setminus\eta/a] + [M'/\eta] = [M'\setminus a/\eta] + [M'/a].\]
Here, the minor $M'/\eta$ is our matroid~$M$ of interest.
The matroid $M'/a$ has a one-element summand with ground set $\{\eta\}$
together with whichever one-element summands $a$ had, so it is subsumed by
our induction hypothesis.  The other two matroids are on fewer elements.
This proves case~(b).

\noindent\emph{To (c)}.
Here we use induction on the number of generic coloops
and on the size of the number of elements
which don't generate single-element direct summands.

Suppose that $a$ is a generic coloop of~$M$.  Then $M(E\setminus a)$
has a rank~1 projective summand, call it $P$.  By Lemma~\ref{lem:essential}, 
the empty matroid $\emptyset_P$ for~$P$ splits as a direct summand of~$M\setminus a$.
Name the other direct summand $N$.  

Let $C$ be a cyclic module which is sufficiently large that every cyclic summand
of a module appearing in~$M$ is isomorphic to a quotient of~$C$, 
and such that $[P] = [C]$ in~$\Pic(R)$.  Let $M'$ be a system of $R$-modules
so that $M'\setminus\eta = M$; $M'/\eta\setminus a = N\oplus \emptyset_C$; 
and $M'/\eta/a = M/a$.  That is, 
$M'$ is obtained from the direct sum $\tilde M$
of~$M$ and the one-element matroid $\emptyset\mapsto 0$, $\{\eta\}\mapsto 0$
by replacing a summand~$P$ by~$C$ at every set containing $\eta$ but not~$a$.
We will show that $M'$ is a matroid
using Propositions \ref{p:Dedekind M1} and~\ref{p:Dedekind M2}.

For Proposition~\ref{p:Dedekind M2}, since $\tilde M$ is a matroid, we need only check
that the replacements of $P$ by~$C$
don't interfere with the condition to be checked in Proposition~\ref{p:DVR M2}.
If $\mf m$ is a maximal prime, then the sequences $d_\bullet(\tilde M(A))$ 
and $d_\bullet(M'(A))$ are of course identical if no replacement has
taken place, and if one has, they differ only in that
$d_i(M'(A)) = d_i(\tilde M(A))-1$ for all $i\geq k$, where $k$
is such that every sequence $d_\bullet(M(B))$ is constant from the $k$\/th position on.
Replacing $P$ by~$C$ can't cause any difference $d_i(M'(A))-d_i(M'(Ab))$
to leave the range $\{0,1\}$: 
if this difference were to be~$2$ then $b$ must be~$\eta$, and if it were to be~$-1$
then $b$ must be $a$, but neither of these situations occur in the construction.
The replacement also doesn't change the quantity on the left side of the displays
in (L2a) and~(L2b) for any two-element minor $M''$ of~$M'$, 
and hence doesn't undermine the truth of these conditions,
unless the ground set of~$M''$ is $\{a,\eta\}$,
in which case that quantity is incremented.  But in this event, 
by construction, the equality of~(L2b) is attained in the corresponding minor
of $\tilde M$ for $d_{\leq k}$, and so (L2b) is still true of~$M''$.

For Proposition~\ref{p:Dedekind M1}, all that remains to check are
the equalities of determinants.
There are two cases to consider which are not inherited from $M$ or $N\oplus \emptyset_C$.  
One involves $M'(A)$ and $M'(A\eta)$ for $\eta\not\in A$ and $a\not\in A$,
where the rank drop is~1,
and $M'(A) = P\oplus N(A)$ and $M'(A\eta) = C\oplus N(A)$ have the same determinant
by choice of~$C$.  The other involves $M'(A)$ and $M'(Aa)$ for $\eta\in A$ and $a\not\in A$,
where the rank drop is~0.  In this case Lemma~\ref{lem:projectives in molecules}
gives that $\projpart{M(A)} = P\oplus\projpart{M(Aa)}$.
Then $\projpart{M'(A)} = \projpart{(C\oplus M(Aa))} = \projpart{M(Aa)}$
and $\projpart{M'(Aa)} = \projpart{M(Aa)}$ agree.

Thus $M'$ is a matroid.  In its generic matroid, all elements
are loops or coloops except for $\eta$ and~$a$ which generate a uniform matroid $U_{1,2}$.
so we have deletion-contraction relations
\[[M'\setminus\eta] + [M'/\eta] = [M'] = [M'\setminus a] + [M'/a].\]
In this relation $M'\setminus\eta$ is our~$M$.
The matroid $M'/\eta$ has a one-element direct summand on ground set~$\{a\}$,
so is encompassed by our second induction; the matroids $M'\setminus a$
and $M'/a$ have a greater number of coloops than~$M$, so are encompassed by our first.
This proves case~(c).

\noindent\emph{To (d)}.
Repeatedly using deletion-contraction to break up
any matroid with at least two bases, on any element which is not a loop or coloop, 
expresses the class of any matroid as a sum of classes of molecules.
\end{proof}

\subsection{Arithmetic Tutte polynomial and quasi-polynomial}\label{ssec:arithmetic Tutte}
In this subsection, $M$ is a matroid over~$\Z$.  
We show that the arithmetic Tutte polynomial and the Tutte quasi-polynomial
are images of $\T_M$ under ring homomorphisms.

Since $\Z[\fgMod{\Z}]\otimes\Z[\fgMod{\Z}]\simeq \Z[X,Y]$, we have that
$$\T_M = \sum_{A\subseteq E}
(X^R)^{\cork_M(A)} (Y^R)^{\operatorname{nullity}_M(A)} (XY)^{\tors{M(A)}}.
$$
where we use the notation $\operatorname{nullity}_M(A) =\cork_{M^*}(E\setminus A)=\dim M^*(E\setminus A)$.

We may define a specialization of $\T_M$ by evaluating $X^R$ at $(x-1)$,
$Y^R$ at $(y-1)$, and $(XY)^N$ at the cardinality of~$N$ for each torsion module~$N$.
This specialization is the arithmetic Tutte polynomial $\M_{\hat{M}}(x,y)$ of the quasi-arithmetic matroid $\hat{M}$ defined by $M$: 
$$
\M_{\hat{M}}(x,y)= \sum_{A \subseteq E} m(A)(x-1)^{\rk(E)-\rk(A)}(y-1)^{|A|-\rk(A)},
$$
where $m(A) = |\tors{M(A)}|$.
This polynomial proved to have several applications to toric arrangements, partition functions, zonotopes, and graphs with labeled edges (see \cite{MociT}, \cite{D'Adderio-Moci}). Notice that an ordinary matroid $\tilde M$ can be trivially made into an arithmetic matroid $\hat{M}$ by setting all the multiplicities to be equal to 1, and then 
$\M_{\hat{M}}(x,y)$ is nothing but the classical Tutte polynomial $\T_{\tilde{M}}(x,y)$.

Clearly, the polynomial $\M_{\hat{M}}(x,y)$ is not the universal deletion-contraction 
invariant of $\hat{M}$.  For instance, the 
ordinary Tutte polynomial $\T_{\tilde{M}}(x,y)$ of the matroid $\tilde M$ obtained from $\hat{M}$ by 
forgetting of its arithmetic data is also a deletion-contraction invariant 
of $\hat{M}$, which is not determined by $\M_{\hat{M}}(x,y)$. This fact led the authors of 
\cite{Branden-Moci} to define a \emph{Tutte quasi-polynomial} $\mathbf{Q}_M(x,y)$, interpolating 
between $\T_{\tilde{M}}(x,y)$ and $\M_{\hat{M}}(x,y)$. This invariant is stronger, but still 
not universal, and more importantly, it is not an invariant of the arithmetic matroid,
as it depends on the groups $\tors{M(A)}$ and not just on their cardinalities. We will now show that $\mathbf{Q}_M(x,y)$ is actually an invariant of the matroid over $\Z$, and write explicitly how to compute it from the universal invariant.

For every positive integer $q$, let us define a function $V_q$ as follows:
\[
V_q((XY)^{\Z/p^k}) = \begin{cases}
1 & \mbox{if $p^k$ divides $q$} \\
p^{k-j} & \mbox{if $0\leq j<k$ is maximal s.t.\ $p^j$ divides $q$.}
\end{cases}
\]
We will extend this to define $V_q((XY)^N)$ multiplicatively for any torsion abelian group $N$.
Then we define a specialization of $\T_M$ to the ring of quasipolynomials
by specializing $X^R$ to $(x-1)$, $Y^R$ to $(y-1)$, and $(XY)^N$ to $V_{(x-1)(y-1)}((XY)^N)$.
This gives
$$
\mathbf{Q}_M(x,y)\doteq
\sum_{A\subseteq E}(x-1)^{\cork_{M(A)}} (y-1)^{\operatorname{nullity}_{M(A)}} V_{(x-1)(y-1)}((XY)^{\tors{M(A)}})=$$
$$=\sum_{A \subseteq E} \frac{|\tors{M(A)}|}{|(x-1)(y-1) \tors{M(A)}|}(x-1)^{\rk(E)-\rk(A)}(y-1)^{|A|-\rk(A)}.
$$

Since $(q+|G|)G=qG$ holds for any finite group $G$, the function $
\mathbf{Q}_M(x,y)$ is a quasi-polynomial in $q=(x-1)(y-1)$. In particular, when $|\tors{M(A)}|$ divides $(x-1)(y-1)$, then the group $(x-1)(y-1) \tors{M(A)}$ is trivial and $\mathbf{Q}_M(x,y)$ coincides with $\M_{\hat{M}}(x,y)$; while when $|\tors{M(A)}|$ is coprime with $(x-1)(y-1)$, then $\mathbf{Q}_M(x,y)$ coincides with $\T_{\tilde{M}}(x,y)$. Then in some sense $\mathbf{Q}_M(x,y)$ interpolates between the two polynomials.

Notice that while $\M_{\hat{M}}$ and $\T_{{\tilde M}}(x,y)$ only depend on the induced quasi-arithmetic matroid $\hat{M}$, $\T_M$ and $\mathbf{Q}_M(x,y)$ are indeed invariants of the matroid over $\Z$, $M$. Also the \emph{chromatic quasi-polynomial} and the \emph{flow quasi-polynomial} defined in \cite{Branden-Moci} are actually invariants of the matroid over $\Z$: by \cite[Theorem 9.1]{Branden-Moci} they are specializations of $\mathbf{Q}_M(x,y)$, and hence of the universal invariant 
$\T_M$.

\end{document}